\crefname{equation}{}{}
\newcommand\N{\mathbb N}
\newcommand\Q{\mathbb Q}
\newcommand\R{\mathbb R}
\newcommand\COP{\mathrm{COP}}
\newcommand\SPN{\mathrm{SPN}}
\newcommand\MK{\mathcal{K}}
\newcommand\MQ{\mathcal{Q}}
\newcommand\MS{\mathcal{S}}
\newcommand\poly{\mathrm{poly}}
\newcommand\size{\mathrm{size}}
\newcommand{\ignore}[1]{}
\newcommand{\ep}{\varepsilon}
\theoremstyle{definition}
\newtheorem{thm}{Theorem}
\numberwithin{thm}{section}
\newtheorem{lemma}[thm]{Lemma}
\newtheorem{cor}[thm]{Corollary}
\newtheorem{obs}[thm]{Observation}
\newtheorem{ex}[thm]{Example}
\newtheorem{df}[thm]{Definition}
\newtheorem*{rep@theorem}{\rep@title}
\newcommand{\newreptheorem}[2]{%
\newenvironment{rep#1}[1]{%
 \def\rep@title{#2 \ref{##1}}%
 \begin{rep@theorem}}%
 {\end{rep@theorem}}}
\title{Computational complexity of sum-of-squares bounds for copositive programs}
\author{Marilena Palomba*}
\address{*SUPSI, IDSIA, Via la Santa 1, 6962 Lugano-Viganello, Switzerland.}
\email{marilena.palomba@supsi.ch}
\author{Lucas Slot$^\dagger$}
\address{$^\dagger$ETH Zurich. Andreasstrasse 5, 8092 Zurich, Switzerland.}
\email{lucas.slot@inf.ethz.ch}
\author{Luis Felipe Vargas*}
\email{luis.vargas@supsi.ch}
\author{Monaldo Mastrolilli*}
\email{monaldo.mastrolilli@supsi.ch}
\subjclass{90C22, 90C23, 90C25, 90C27, 90C51, 90C60}
\date{\today}
\keywords{copositive matrix, copositive programming, semidefinite programming, sum of squares hierarchy, computational complexity, polynomial optimization.}
\begin{document}
\begin{abstract}
In recent years, copositive programming has received significant attention for its ability to model hard problems in both discrete and continuous optimization. Several relaxations of copositive programs based on semidefinite programming (SDP) have been proposed in the literature, meant to provide tractable bounds. However, while these SDP-based relaxations are amenable to the ellipsoid algorithm and interior point methods, it is not immediately obvious that they can be solved in polynomial time (even approximately). In this paper, we consider the sum-of-squares (SOS) hierarchies of relaxations for copositive programs introduced by Parrilo~(2000), de Klerk \& Pasechnik~(2002) and Peña, Vera \& Zuluaga~(2006), which can be formulated as SDPs. We establish sufficient conditions that guarantee the polynomial-time computability (up to fixed precision) of these relaxations. These conditions are satisfied by copositive programs that represent standard quadratic programs and their reciprocals. As an application, we show that the SOS bounds for the (weighted) stability number of a graph can be computed efficiently. 
Additionally, we provide pathological examples of copositive programs (that do not satisfy the sufficient conditions) whose SOS relaxations admit only feasible solutions of doubly-exponential size.
\end{abstract}

\maketitle

\section{Introduction}

A symmetric matrix $M\in \mathcal S^n$ is \emph{copositive} if $x^T M x\geq 0$ for all $x\in\mathbb R^n_+$, i.e., if the quadratic form $x^T M x = \sum_{i,j=1}^n M_{ij} x_i x_j$ associated to $M$ is nonnegative for all vectors $x \in \mathbb R^n_+$. The set of all copositive matrices is a full-dimensional, closed and convex matrix cone~\cite{Hall_Newman_1963}, called the {\em copositive cone}:
\begin{equation}\label{COP1}
    \COP_n := \{M\in \mathcal S^n \colon x^T M x\geq 0 \ \forall x\in \mathbb R^n_+\}.
\end{equation}
Copositivity of $M$ can equivalently be defined using the \emph{quartic} form $(x^{\circ 2})^T M x^{\circ 2}$, where $x^{\circ 2} = (x_1^2,\dots, x_n^2)$. Namely, we have
\begin{equation}\label{COP2}
    \COP_n = \{M \in \mathcal S^n : (x^{\circ 2})^T M x^{\circ 2} \geq 0\ \forall x \in \mathbb R^n\}.
\end{equation}
The copositive cone contains both the cone $\mathcal{S}^+_n$ of positive semidefinite matrices, as well as the cone $\mathcal N_n$ of (symmetric) matrices with nonnegative entries. That is, ${\COP_n \supseteq \SPN_n := \mathcal{S}^+_n +  \mathcal N_n}$. This inclusion is strict if and only if $n \geq 5$ \cite{Diananda_1962, Hall_Newman_1963}.

Let $A_1, \dots, A_m, C\in \MS^n$ be symmetric matrices and let $y\in \mathbb{R}^m$ be a vector. A \textit{copositive program} is defined as follows  
\begin{align}\label{cop-program}\tag{CP}
    p^* = \min\Big\{ b^Ty: \sum_{i=1}^my_iA_i-C \in \COP_n\Big\}.
\end{align}
Copositive programming has practical implications in several fields, including operations research (for optimizing resource allocation and scheduling problems, see e.g. \cite{Kumar1996,Humes1997}), engineering (for dynamical systems and optimal control, see e.g. \cite{BUNDFUSS2009342,Mesbahi2000}) and game theory (for refinements of the Nash equilibrium concept, see e.g. \cite{Bomze1989GameTF}). See \cite{BomzeSchachinger2012} for additional references and applications.

On the other hand, copositive programming is NP-hard in general. Indeed, classical NP-hard problems like computing the stability and chromatic number of a graph can be formulated as copositive programs. 

\begin{ex}\label{example:alpha}
    The \emph{stability number} $\alpha(G)$ of a graph $G$ measures the cardinality of the largest set of vertices in $G$ such that no two vertices in the set are adjacent.
    As shown by Motzkin \& Straus~\cite{Motzkin_Straus_1965}, the stability number can be computed via a quadratic optimization problem over the standard simplex:
    \begin{equation} \label{EQ:alpha_SQP}
        \frac{1}{\alpha(G)} = \min \{x^T(A_G+I)x \colon x\in\Delta_n\},
    \end{equation}
    where $A_G$ is the adjacency matrix of $G$.
    De Klerk \& Pasechnik \cite{dKP02} showed that~$\alpha(G)$ can be computed by solving the following copositive program:
    \begin{equation}\label{alpha}
        \alpha(G) =\min \{t\colon t(A_G+I)-J \in \COP_n\},
    \end{equation}
    where $J$ is the all-ones matrix. 
\end{ex}
So, while copositive programs are widely applicable, it is generally intractable to solve them to optimality. This has motivated the definition of several hierarchies of (tractable) relaxations of~\eqref{cop-program} in the literature. In this paper, we consider relaxations that rely on modeling nonnegativity of the quadratic (or quartic) form associated to a symmetric matrix using \emph{sums of squares} of polynomials of increasing degree.
In both cases, this leads to a sequence of \emph{spectrahedral cones} $\mathcal C^{\smash{(1)}} \subseteq \mathcal C^{\smash{(2)}} \subseteq \ldots \subseteq \COP_n$ that approximate the copositive cone.
These cones can be used to define an increasingly accurate sequence of \emph{upper bounds} $p_1 \geq p_2 \geq \ldots \geq p^*$ on the optimum of the program~\eqref{cop-program}:
\begin{align*}
    p^{(k)} := \min\Big\{ b^Ty: \sum_{i=1}^my_iA_i-C \in \mathcal C^{(k)}\Big\} \geq p^*.
\end{align*}
The upshot is that, for any fixed $k \in \N$, the bound $p^{\smash{(k)}}$ can be formulated as a \emph{semidefinite program} involving matrices of polynomial size in $n$. In contrast to copositive programs, semidefinite programs can be tackled by computationally efficient algorithms such as the ellipsoid algorithm and interior point methods. In fact, under some seemingly minor conditions, they can be solved in polynomial time (up to a small additive error), see, e.g.,~\Cref{thm:approx sol} below.

However, and importantly, it is not clear a priori whether these conditions are actually satisfied for the semidefinite formulations of the programs that define the bounds $p^{\smash{(k)}}$. As a result, we cannot conclude that these bounds can be computed in polynomial time (even approximately).
In fact, even if the original program~\eqref{cop-program} is defined in terms of matrices with well-behaved entries (i.e., of small \emph{bit size}, see~\Cref{SEC:bitsize}), it can happen that any feasible solution of the semidefinite relaxation must contain entries of doubly-exponential size (in $n$).
These feasible solutions cannot be identified in polynomial time by standard implementations of the ellipsoid algorithm or interior point methods. 
In Section \ref{section:examples} we provide explicit examples that exhibit this pathology (similar examples were provided by O'Donnell~\cite{odonnell:LIPIcs.ITCS.2017.59} in the context of polynomial optimization, see~\Cref{SEC:related-word} below). 

The goal of this paper is to identify when certain well-studied semidefinite relaxations for copositive programs can provably be computed  in polynomial time (up to any fixed precision). We show that this is possible when~\eqref{cop-program} corresponds to a standard quadratic program, or the reciprocal of a quadratic program (under a mild additional condition).  The latter includes, in particular, program~\eqref{alpha} corresponding to the (weighted) stability number of a graph. Our results give a theoretical justification for the use of sum-of-squares relaxations in copositive programming.
Before we can state our precise contributions, we first introduce the relaxations that we consider in this paper more formally.

\subsection{Sum-of-squares relaxations}\label{SEC:relaxations}

The sum-of-squares (SoS) proof system is a powerful method in optimization and theoretical computer science. 
It is particularly significant in addressing polynomial optimization problems and has connections to several areas, including combinatorial optimization, complexity theory, and algorithm design, see, e.g., \cite{FlemingKothariPitassi19}.

At its core, the SoS proof system deals with proving that a given polynomial is nonnegative, by writing it as a sum of squares of other polynomials.
A polynomial $p \in \mathbb R[x]$ is a \emph{sum of squares} if there exist polynomials $q_1, q_2, \ldots, q_m \in \mathbb R[x]$ such that
\begin{equation}\label{SoS}
    p= \sum_{i=1}^m q_i^2. 
\end{equation}
We write $\Sigma \subseteq \R[x]$ for the cone of sums of squares.
An \emph{SoS certificate} for the nonnegativity of a polynomial $p$ is an explicit representation of $p$ as a sum of squares as in \eqref{SoS}. If such a representation exists, it provides a constructive proof for the nonnegativity of $p$ on $\R^n$.

As observed in \cite{CTR1994}, the problem of finding an SoS certificate for a polynomial $p$ can be formulated as a semidefinite program. Specifically, a polynomial $p \in \mathbb R[x]_{2d}$ of degree $2d$ is a sum of squares if and only if there exists a positive semidefinite matrix $Q$ such that
\begin{equation}\label{SoS-SDP}
    p(x)=[x]_d^T Q[x]_d, 
\end{equation}
where $[x]_d = (x^\alpha)_{|\alpha| \leq d}$ is the vector of all monomials of degree at most $d$. 

The SoS proof system can be extended to verify nonnegativity of polynomials over (basic) \emph{semialgebraic sets}, being subsets of $\R^n$ defined by polynomial inequalities. This permits to define a hierarchy of increasingly accurate approximations for \emph{polynomial optimization problems}, known as the Lasserre hierarchy (or moment-SoS hierarchy)~\cite{Lasserre:lowerbound}. At each level $k$ of the hierarchy, a polynomial optimization problem is approximated using sums of squares of degree up to $2k$. This yields a series of semidefinite programs (of increasing size), each providing a better bound on the minimum $p_{\min}$ of $p$ on the semialgebraic set; namely
\begin{equation}
    \label{EQ:lasserre} \tag{SOS} \max_{\lambda \in \R} \left \{ p - \lambda \text{ has an SoS representation of degree $2k$}\right \} \leq p_{\min}.
\end{equation}

\subsubsection*{Relaxations for copositive programs}

The sum-of-squares hierarchy can be adapted to approximate copositive programs as well. 
Recall that copositivity of a matrix~$M$ can be expressed in terms of nonnegativity of a polynomial associated to $M$, either over $\R^n_+$ or over $\R^n$, see~\eqref{COP1} and~\eqref{COP2}, respectively. The basic idea is to replace the condition of nonnegativity with a {\em sufficient} condition that involves sums of squares. We recall two ways of implementing this idea from the literature.

Based on earlier work of Parrilo~\cite{Parrilo2000StructuredSP}, de Klerk \& Pasechnik~\cite{dKP02} introduced a series of spectrahedral cones approximating the copositive cone, defined as
\begin{equation}\label{eq:def-cones-Kr}
    \mathcal K_n^{\smash{(r)}} := \Big\{M\in \mathcal S^n \colon \big(\sum_{i=1}^n x_i^2\big)^r  (x^{\circ 2})^T M x^{\circ 2} \text{ is a sum of squares}\Big\} \quad (r \in \N).
\end{equation}
In light of~\eqref{COP2}, we have $\mathcal K_n^{\smash{(r)}} \subseteq \COP_n$ for all $r \in \N$. The motivation for this definition is Polya's theorem \cite{pólya1928}: Given a homogeneous polynomial $f\in \mathbb R[x]$ such that $f(x) >0$ for all $x\in\mathbb R_+^n\setminus \{0\}$, there exists $r\in \mathbb{N}$ such that the polynomial $(\sum_{i=1}^n x_i)^r \cdot f$ has nonnegative coefficients; in particular, $(\sum_{i=1}^nx_i^2)^r f(x_1^2, \dots, x_n^2)$ is then a sum of squares (see also Reznick's theorem \cite{Reznick1995}). As a matrix $M\in \MS^n$ belongs to the interior of the copositive cone if and only if $x^TMx>0$ for all $x\in \mathbb{R}_+^n\setminus\{0\}$, we can apply Polya's theorem to find that
\[
    \mathrm{int}(\COP_n) \subseteq \bigcup_{r\geq 0} \MK_n^{\smash{(r)}}.
\]
Zuluaga et al.~\cite{Zuluaga} showed that the cones $\MK_n^{\smash{(r)}}$ can also be formulated as:
\begin{equation}\label{eq: K-ZVP}
    \MK_n^{\smash{(r)}} = \Big \{ M\in \mathcal S^n \colon \big( \sum_{i=1}^n x_i \big)^r x^T M x = \sum_{\substack{\beta \in \mathbb N^n, \\ \vert \beta \vert \leq r+2}} x^{\beta} \sigma_{\beta}, \text{ for some } \sigma_{\beta} \in \Sigma_{r+2-\vert \beta\vert} \Big \}.
\end{equation}
By restricting in \eqref{eq: K-ZVP} to sums of squares of degree $0$ or $2$, they introduced a second series of cones approximating the copositive cone, namely
\begin{equation}\label{eq: def-conesQ}
    \mathcal Q_n^{\smash{(r)}} = \Big \{ M\in \mathcal S^n \colon \big( \sum_{i=1}^n x_i \big)^r x^T M x = \sum_{\substack{\beta \in \mathbb N^n, \\ \vert \beta \vert =r,~r+2}} x^{\beta} \sigma_{\beta}, \text{ for some } \sigma_{\beta} \in \Sigma_{r+2-\vert \beta\vert}\Big \}.
\end{equation}
By definition, we have that $\MQ_n^{\smash{(r)}} \subseteq \MK_n^{\smash{(r)}} \subseteq \COP_n$ for all $r \in \N$. Moreover, using Polya's theorem again, we have that 
\begin{equation} \label{EQ:intCOP_QK}
    \mathrm{int}(\COP_n) \subseteq \bigcup_{r\geq 0} \MQ_n^{\smash{(r)}} \subseteq \bigcup_{r\geq 0} \MK_n^{\smash{(r)}}.
\end{equation}

In this paper, we focus on relaxations of~\eqref{cop-program} based on the inner conic approximations $\MK^{\smash{(r)}}_n$ and $\MQ^{\smash{(r)}}_n$ of the copositive cone $\COP_n$ introduced above. 

\begin{df}
    Given $m+1$ symmetric matrices $A_1, \dots, A_m, C\in \MS^n$, and a vector $b\in \mathbb{R}^m$, we define the following conic optimization programs
\begin{align}
    p_{\MK}^{\smash{(r)}} &= \min_{y\in \mathbb{R}^m}\Big\{ b^Ty: \sum_{i=1}^my_iA_i-C \in \MK_n^{\smash{(r)}}\Big\}, \label{eq:conic-prog1} \tag{CP-K} \\
    p_{\MQ}^{\smash{(r)}} &= \min_{y\in \mathbb{R}^m}\Big\{ b^Ty: \sum_{i=1}^my_iA_i-C \in \MQ_n^{\smash{(r)}}\Big\}. \label{eq:conic-prog2} \tag{CP-Q}
\end{align}
\end{df}
Let $p^*$ be the optimum solution value of the copositive program~\eqref{cop-program}. Since $\MQ_n^{\smash{(r)}} \subseteq \MK_n^{\smash{(r)}} \subseteq \COP_n$ for all $r \in \N$, we see immediately that 
\[
    p_{\MQ}^{\smash{(r)}} \geq p_{\MK}^{\smash{(r)}} \geq p^* \quad (\forall r \in \N).
\]
Moreover, assuming that~\eqref{cop-program} is strictly feasible, and in light of~\eqref{EQ:intCOP_QK}, we have that
\[
     \lim_{r \to \infty } p_{\MK}^{\smash{(r)}} = \lim_{r \to \infty } p_{\MQ}^{\smash{(r)}} = p^{*}.
\]
The parameters $p_{\MK}^{\smash{(r)}}$ and $p_{\MQ}^{\smash{(r)}}$ thus provide two monotonely decreasing sequences of upper bound on $p^*$, which are convergent if~\eqref{cop-program} is strictly feasible. The cones $\MK_n^{\smash{(r)}}$  (resp. $\MQ_n^{\smash{(r)}}$) and the bounds $p_{\MK}^{\smash{(r)}}$ (resp. $p_{\MQ}^{\smash{(r)}}$) have been intensively studied in the literature in several contexts. We refer to the works \cite{DobreVera,Rendl,DDGH,SV,LV23} and the references therein.

It is possible to provide other sequences of conic approximations restricting relation \eqref{COP1} to the standard simplex $\Delta_n$ and \eqref{COP2} to the unit sphere $\mathbb S^{n-1}$ (see \cite{LAURENT202226}), but these are beyond the scope of this work.

\subsection{Our contributions}\label{subsection:our contribution}

Our main technical contribution is the following theorem, which shows that the relaxations \eqref{eq:conic-prog1} and~\eqref{eq:conic-prog2} of a copositive program~\eqref{cop-program} can be computed in polynomial time (up to a small additive error) under two technical assumptions.
\begin{thm} \label{THM:main}
Let $C, A_1, \ldots, A_m$ be the matrices defining the copositive program~\eqref{cop-program}.
Suppose that the following assumptions hold:
    \begin{enumerate}
        \item[(PBOS)] The program \eqref{eq:conic-prog1} (resp.~\eqref{eq:conic-prog2}) has a \emph{polynomially bounded optimal solution (PBOS)}. That is, an optimal solution $y^*\in\mathbb [-R, R]^m$, with $R \leq 2^{\poly(n)}$. 
        \item[(intSPN)] There exists a vector $\bar y\in\mathbb Q^m$, a nonnegative matrix $N\in \mathcal{N}_n$, and a positive definite matrix $P\in \MS^n_+$ with $\lambda_{\min}(P)\geq 2^{-\poly(n)}$, all encodable in $\poly(n)$ bits, such that,
        \[
            \sum_{i=1}^m\bar y_i A_i-C =P+N.
        \]
        That is, \eqref{cop-program} has a sufficiently nice feasible point in $\SPN_n = \MS_n^+ + \mathcal{N}_n$.
    \end{enumerate}
    Then, for any fixed $r \in \N$, the parameter $p_\MK^{\smash{(r)}}$ (resp. $p_\MQ^{\smash{(r)}}$) can be computed up to an additive error $\ep > 0$ in time $\poly(n, \log(1/\ep$)).
\end{thm}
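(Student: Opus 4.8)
The plan is to cast the computation of $p_\MK^{(r)}$ (resp. $p_\MQ^{(r)}$) as a semidefinite program of polynomial size in $n$ (for fixed $r$), and then to invoke a general result on approximately solving SDPs in polynomial time (such as \Cref{thm:approx sol}), checking that assumptions (PBOS) and (intSPN) supply exactly the quantitative hypotheses that such a result requires: a ball of feasible solutions of not-too-small radius around a not-too-large point, and an a priori bound on the norm of an optimal solution. First I would recall, via the Gram matrix characterization~\eqref{SoS-SDP}, that membership of a symmetric matrix $X = \sum_i y_i A_i - C$ in $\MK_n^{(r)}$ (resp. $\MQ_n^{(r)}$) is equivalent to the existence of a psd matrix $Q$ (indexed by monomials of degree at most $r/2+1$, hence of size $\binom{n+r/2+1}{\cdot} = \poly(n)$ for fixed $r$) such that the polynomial identity $(\sum_i x_i^2)^r (x^{\circ 2})^T X x^{\circ 2} = [x]^T Q [x]$ holds; equating coefficients turns this into linear equations in the entries of $Q$ and the variables $y$. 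Using the alternative description~\eqref{eq: K-ZVP}–\eqref{eq: def-conesQ} one gets an analogous SDP with several psd blocks $\sigma_\beta$; either way, $p_\MK^{(r)}$ is the optimum of an SDP in $\poly(n)$ variables and constraints, all with coefficients of $\poly(n)$ bit size (the entries of the $A_i, C$ together with the fixed multinomial coefficients coming from $(\sum x_i^2)^r$).

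Next I would produce the strictly feasible point with a quantitative interior guarantee. Here assumption (intSPN) does the work: taking $\bar y$ with $\sum_i \bar y_i A_i - C = P + N$, $P \succeq \lambda_{\min}(P) I$ with $\lambda_{\min}(P) \ge 2^{-\poly(n)}$ and $N \in \mathcal N_n$, one has that the quartic form $(x^{\circ 2})^T (P+N) x^{\circ 2}$ is the sum of $(x^{\circ 2})^T P x^{\circ 2} \ge \lambda_{\min}(P)\|x\|_4^4$ (a psd quartic, hence trivially SOS with a Gram matrix $\succeq \lambda_{\min}(P)\cdot(\text{something})$) and a form with nonnegative coefficients (so $(\sum x_i^2)^r$ times it is SOS with a diagonal Gram matrix having entries bounded below by the nonnegative coefficients of $N$ times fixed multinomials). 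Multiplying by $(\sum x_i^2)^r$ preserves this and only improves the psd-ness quantitatively, so $\bar y$ is feasible for the SDP with a Gram matrix whose smallest eigenvalue is at least $2^{-\poly(n)}$; this yields a ball of radius $2^{-\poly(n)}$ of feasible $(y, Q)$ around a point of norm $2^{\poly(n)}$. Assumption (PBOS) directly supplies the required a priori bound $\|y^*\|_\infty \le R \le 2^{\poly(n)}$ on an optimal solution, and one can bound the corresponding optimal $Q$ in terms of $R$ and the data, so the feasible region may be intersected with a ball of radius $2^{\poly(n)}$ without changing the optimum. With these three ingredients — $\poly(n)$-size SDP with $\poly(n)$-bit data, a $2^{-\poly(n)}$-radius feasible ball around a $2^{\poly(n)}$-norm point, and a $2^{\poly(n)}$ bound on the optimum — the general theorem gives the value up to additive error $\ep$ in time $\poly(n, \log(1/\ep))$, as claimed.

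The main obstacle I anticipate is the quantitative bookkeeping in the reduction to an SDP: one must verify that the multinomial coefficients arising from expanding $(\sum_i x_i^2)^r$ (and the change of variables $x \mapsto x^{\circ 2}$) are of polynomial bit size for fixed $r$, that the number of monomials indexing $Q$ is $\poly(n)$, and — most delicately — that the strict-feasibility certificate coming from (intSPN) survives multiplication by $(\sum_i x_i^2)^r$ with its interior-ball radius degrading by at most a $2^{-\poly(n)}$ factor rather than worse. The psd part is the subtle one: $(x^{\circ 2})^T P x^{\circ 2}$ is a quartic whose natural Gram matrix (in the basis of degree-$2$ monomials) is not simply $P$, so one has to argue that it is positive definite on the relevant subspace with a controlled smallest eigenvalue, and that tensoring with the SOS certificate of $(\sum x_i^2)^r$ keeps the combined Gram matrix well-conditioned. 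The $\MQ$ case requires the additional observation that the certificate built from (intSPN) only uses $\sigma_\beta$ of degree $0$ or $2$ with the right support, which is why (intSPN) is phrased in terms of $\SPN_n$ rather than a general member of $\MK_n^{(r)}$.
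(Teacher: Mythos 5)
Your high-level skeleton (Gram-matrix SDP formulation plus the de Klerk--Vallentin result, \Cref{thm:approx sol}, fed by \eqref{intSPN} for the inner ball and \eqref{PBOS} for the outer bound) is the same as the paper's, but there is a genuine gap at the step you dismiss in one clause: ``one can bound the corresponding optimal $Q$ in terms of $R$ and the data.'' This is exactly the claim that fails for SDP relaxations in general --- it is the pathology the whole paper is about --- and it is not implied by \eqref{PBOS}: that assumption bounds only the $y$-coordinates of an optimal solution, while \Cref{thm:approx sol} needs the \emph{entire} feasible region of the SDP (including all Gram blocks) inside a ball of radius $2^{\poly(n)}$, or at least a bounded feasible region after a modification that you would then have to justify does not destroy the inner ball or the optimum. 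A priori, even when $\sum_i y_iA_i-C$ has small entries, every SoS certificate of its membership in $\MK_n^{\smash{(r)}}$ or $\MQ_n^{\smash{(r)}}$ could require doubly-exponential coefficients (compare \Cref{ex:PBOSSDPneeded}, where the relaxation's optimal solutions are doubly exponential despite the copositive program being well behaved). The paper closes this gap with a dedicated result, \Cref{thm:SoS bound coeff-1} and \Cref{coro-SoS}: for matrices with $2^{\poly(n)}$-bounded entries, \emph{any} Gram/SoS representation in the definitions \eqref{eq:def-cones-Kr}, \eqref{eq: K-ZVP}, \eqref{eq: def-conesQ} has coefficients bounded by $2^{\poly(n)}$, proved via the supremum-norm vs.\ coefficient-norm comparison on a small ball around the all-ones point (\Cref{lemma-use}, \Cref{lemma-ball}). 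Without an argument of this kind your outer-ball radius $R_2$ is simply unproved, and with it the modified SDP (with the $y$'s encoded as bounded diagonal variables $2R\pm y_i$) becomes necessary so that Corollary \ref{coro-SoS} applies to every feasible point, not just to one optimal one.

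The inner ball is also not done as sketched. You try to extract strict feasibility from the psd part, claiming $(\sum_i x_i^2)^r(x^{\circ2})^TPx^{\circ2}$ has a Gram matrix with smallest eigenvalue $\geq 2^{-\poly(n)}$ on the full monomial space; you yourself flag this as the subtle point and do not prove it, and it cannot be salvaged by the $N$-part since \eqref{intSPN} allows $N=0$ (its ``diagonal entries bounded below by the coefficients of $N$'' may be zero). The paper avoids this entirely by the shift $P_b:=P-bJ\succeq 0$ with $b\geq 2^{-\poly(n)}$, so that $bJ+N$ has all entries $\geq b$; after multiplication by $(\sum_i x_i^2)^r$ this puts a coefficient $\geq b$ on \emph{every} monomial $x^{2\alpha}$, $|\alpha|=r+2$, and strict feasibility then follows from an elementary diagonal-dominance perturbation argument (and, in the $\MQ$ case, simultaneously yields the degree-$0$ multipliers $\tilde c_\beta\geq b/2$ bounded away from zero and blocks $a_\beta P_b+\tfrac{b}{2n}I$, which your sketch does not provide when $N$ vanishes). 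Finally, a small slip: the Gram matrix for the $\MK_n^{\smash{(r)}}$ certificate is indexed by monomials of degree $r+2$ (the certified polynomial has degree $2r+4$), not $r/2+1$; this does not affect polynomiality but should be fixed.
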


We note that the cones $\MK_n^{\smash{(0)}}$ and $\MQ_n^{\smash{(0)}}$ are both equal to $\SPN_n$. Thus, condition~\eqref{intSPN} may be viewed as a strict feasibility condition on \eqref{eq:conic-prog1}, \eqref{eq:conic-prog2}.

\subsubsection*{Standard quadratic programs and the weighted stability number} 

As our second contribution, we show that the conditions of~\Cref{THM:main} are satisfied for several notable examples of copositive programs.
Let $M\in \MS^n$ be a symmetric matrix. A \emph{standard quadratic optimization problem (SQP)} is a minimization problem over the standard simplex $\Delta_n:=\{x\in \mathbb{R}^n: \sum_{i=1}^nx_i=1, \ x_i\geq 0 \text{ for }i\in [n]\}$ of the following form: 
\begin{align} \label{EQ:SQP} \tag{SQP}
    p_{\min} =\min \Big\{x^TMx: x\in \Delta_n\Big\}.
\end{align}
An SQP can be reformulated as a copositive program~\cite{Bomze2000OnCP, dKP02, Burer09}, see~\eqref{COP-SQ1} in \Cref{section:SQP}. Using~\Cref{THM:main}, we show the following.
\begin{thm} \label{PROP:main_SQP}
    For any $r \in \N$ fixed, the parameters $p_\MK^{\smash{(r)}}$ and $p_{\MQ}^{\smash{(r)}}$ corresponding to the copositive formulation~\eqref{COP-SQ1} of a standard quadratic program can be computed up to an additive error $\ep > 0$ in time $\poly(n, \log (1/\ep))$. 
\end{thm}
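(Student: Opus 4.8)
The plan is to obtain \Cref{PROP:main_SQP} as a direct consequence of \Cref{THM:main}, by checking that both of its hypotheses, (PBOS) and (intSPN), are satisfied by the copositive formulation~\eqref{COP-SQ1} of a standard quadratic program. Recall from \Cref{section:SQP} that, up to the sign of the objective, \eqref{COP-SQ1} is a copositive program in a single scalar variable $\lambda$ whose feasibility constraint reads $M-\lambda J\in\COP_n$, where $J$ is the all-ones matrix; this encodes $p_{\min}=\max\{\lambda:M-\lambda J\in\COP_n\}$ because $x^TMx\ge\lambda$ for all $x\in\Delta_n$ is equivalent to $x^T(M-\lambda J)x=x^TMx-\lambda(\sum_i x_i)^2\ge 0$ for all $x\in\R^n_+$. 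Consequently the relaxations \eqref{eq:conic-prog1} and \eqref{eq:conic-prog2} specialize to the one-variable programs with feasibility constraints $M-\lambda J\in\MK_n^{(r)}$ and $M-\lambda J\in\MQ_n^{(r)}$, respectively. Write $\|M\|_{\max}:=\max_{i,j}|M_{ij}|$, which is at most $2^{\poly(n)}$ since $M$ has entries of polynomial bit size.

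To verify (PBOS), I would first note that on the simplex $x^TMx=\sum_{i,j}M_{ij}x_ix_j$ is a convex combination of the entries of $M$ (as $\sum_{i,j}x_ix_j=(\sum_i x_i)^2=1$ with $x_ix_j\ge 0$), so $|p_{\min}|\le\|M\|_{\max}$. Since $\MK_n^{(r)},\MQ_n^{(r)}\subseteq\COP_n$, any $\lambda$ feasible for the $\MK$- or $\MQ$-relaxation satisfies $\lambda\le p_{\min}$. On the other hand, for $\lambda_0:=-(\|M\|_{\max}+1)$ every entry of $M-\lambda_0J$ is at least $1$, so $(\sum_i x_i)^r\,x^T(M-\lambda_0J)x$ has nonnegative coefficients, which places $M-\lambda_0J$ in $\MQ_n^{(r)}\subseteq\MK_n^{(r)}$. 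Because $\mathcal N_n\subseteq\MK_n^{(r)}$ and the cone $\MK_n^{(r)}$ is closed, the set of feasible $\lambda$ for the $\MK$-relaxation is a closed half-line $(-\infty,\lambda^*]$ with $\lambda^*\in[\lambda_0,p_{\min}]$; hence the relaxation attains its optimum at a point of modulus at most $R:=\|M\|_{\max}+1\le 2^{\poly(n)}$, and the same argument applies verbatim to the $\MQ$-relaxation. This is (PBOS).

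To verify (intSPN), I would take $\bar\lambda:=-(\|M\|_{\max}+2)\in\Q$, $P:=I\in\MS^n_+$, and $N:=M-\bar\lambda J-I$. Every off-diagonal entry of $N$ equals $M_{ij}+\|M\|_{\max}+2\ge 2$ and every diagonal entry equals $M_{ii}+\|M\|_{\max}+1\ge 1$, so $N\in\mathcal N_n$; moreover $\lambda_{\min}(P)=1\ge 2^{-\poly(n)}$, and $\bar\lambda$, $P$, $N$ all have $\poly(n)$ bit size since $M$ does. By construction $M-\bar\lambda J=P+N$, which is exactly the $\SPN_n$-decomposition required by (intSPN) at the feasible point $\bar\lambda$. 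Both hypotheses of \Cref{THM:main} are thus met, and its conclusion gives that $p_\MK^{(r)}$ and $p_\MQ^{(r)}$ can be computed up to additive error $\ep$ in time $\poly(n,\log(1/\ep))$, as claimed.

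I do not expect a genuine obstacle here: the statement is essentially a corollary of \Cref{THM:main}, and the verification above is bookkeeping. The one point that calls for some care is pinning down the exact form of \eqref{COP-SQ1} and the orientation of its inequality, so as to be certain that the feasible $\lambda$-region of the relaxation is bounded on the correct side (by $p_{\min}$, via $\MK_n^{(r)},\MQ_n^{(r)}\subseteq\COP_n$) while still containing a point of polynomial bit size, namely the entrywise-nonnegative shift $M-\lambda_0J$.
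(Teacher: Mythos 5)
Your proposal is correct and follows essentially the same route as the paper: both verify the hypotheses of \Cref{THM:main} by taking $P=I$ together with a shift of order $\max_{i,j}|M_{ij}|+O(1)$ to exhibit the \eqref{intSPN} point, and both obtain \eqref{PBOS} by trapping the feasible $\lambda$-interval between a feasible entrywise-nonnegative shift and an infeasible one, using closedness of the cones $\MK_n^{(r)}$, $\MQ_n^{(r)}$ for attainment. The only differences are cosmetic (you work with the max-form $M-\lambda J$ and bound the feasible region via $\MK_n^{(r)}\subseteq\COP_n$ and $|p_{\min}|\le\max_{i,j}|M_{ij}|$, while the paper notes directly that $M-\bar\lambda J$ with all-negative entries cannot lie in the cone).
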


Additionally, if $p_{\min}>0$ in~\eqref{EQ:SQP}, then $1/{p_{\min}}$ can be formulated as a copositive program as well, see~\eqref{COP-INVERSE1} in  \Cref{section:SQP}. 
Under a mild assumption on the matrix $M$, we show that the bounds \eqref{eq:conic-prog1} and~\eqref{eq:conic-prog2} corresponding to~\eqref{COP-INVERSE1} can be computed in polynomial time (up to a small additive error), see \Cref{THM:COPINV}.
As a special case, this includes the relaxations of the copositive program~\eqref{alpha} that computes the (weighted) stability number $\alpha(G)$ of a graph. Note that, while we could have applied \Cref{PROP:main_SQP} to the SQP~\eqref{EQ:alpha_SQP}, this would yield bounds on $1/\alpha(G)$, which are generally less useful than bounds on~$\alpha(G)$.

\subsubsection*{Bounded copositive programs and the chromatic number}

A limitation of~\Cref{THM:main} is that, even if the original copositive program~\eqref{cop-program} satisfies \eqref{PBOS}, it is not clear whether the relaxations~\eqref{eq:conic-prog1}, \eqref{eq:conic-prog2} do as well. This means it is typically not  easy to verify whether the first condition of~\Cref{THM:main} holds. 
However, as we show in~\Cref{section:bounded-CP}, as long as \eqref{PBOS} holds for a copositive program~\eqref{cop-program}, it is possible to define a modified program~\eqref{CP-1} whose optimal solution value is equal, but whose relaxations~\eqref{eq:conic-prog1}, \eqref{eq:conic-prog2} are guaranteed to verify \eqref{PBOS}. Furthermore, if \eqref{cop-program} satisfies the second condition of~\Cref{THM:main}, then~\eqref{CP-1} does as well. This leads to our third contribution:
\begin{thm} \label{THM:main2}
    Let $r \in \mathbb N$ fixed. Suppose that the following assumptions hold:
    \begin{itemize}
        \item The program~\eqref{cop-program} has a polynomially bounded optimal solution;
        \item The program~\eqref{cop-program} satisfies~\eqref{intSPN}. 
    \end{itemize}
    Then, there exists a copositive program \eqref{CP-1} whose optimal solution value is equal to that of~\eqref{cop-program}, and which satisfies the conditions of \Cref{THM:main}. In particular, the relaxations~\eqref{eq:conic-prog1}, \eqref{eq:conic-prog2} of~\eqref{CP-1} can be computed up to an additive error~$\ep > 0$ in time $\poly(n, \log(1/\ep$)).  
\end{thm}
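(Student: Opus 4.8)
The plan is to build~\eqref{CP-1} from~\eqref{cop-program} by adding explicit box constraints $-R \le y_i \le R$ and realizing them through a block-diagonal enlargement of the defining matrices; then the optimal value is manifestly unchanged, the relaxations~\eqref{eq:conic-prog1},~\eqref{eq:conic-prog2} of~\eqref{CP-1} become bounded (so that~\eqref{PBOS} holds for them), and~\eqref{intSPN} is inherited, so that~\Cref{THM:main} applies. Concretely: let $y^\ast$ be an optimal solution of~\eqref{cop-program} with $\|y^\ast\|_\infty \le R_0 \le 2^{\poly(n)}$ (given by~\eqref{PBOS}) and let $\bar y$ be the point from~\eqref{intSPN}; fix a power of two $R$ with $R \ge R_0 + 1$ and $R \ge \|\bar y\|_\infty + 1$, so that $R$ is $\poly(n)$-bit and $R \le 2^{\poly(n)}$. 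Set (with $e_i$ the $i$-th basis vector of $\R^m$)
\[
    \tilde A_i = A_i \oplus (-e_ie_i^T) \oplus e_ie_i^T, \qquad \tilde C = C \oplus (-RI_m) \oplus (-RI_m) \quad \in \MS^{n+2m},
\]
so that $\sum_i y_i\tilde A_i - \tilde C$ is the block-diagonal matrix with blocks $\sum_i y_iA_i - C$, $\operatorname{Diag}(R\mathbf 1 - y)$ and $\operatorname{Diag}(R\mathbf 1 + y)$, and let~\eqref{CP-1} be $\min\{b^Ty : \sum_i y_i\tilde A_i - \tilde C \in \COP_{n+2m}\}$, with the same objective and variables.

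I would first check that the optimal value is unchanged. A block-diagonal matrix is copositive iff each diagonal block is, and $[c] \in \COP_1$ iff $c \ge 0$; hence $y$ is feasible for~\eqref{CP-1} iff $\sum_i y_iA_i - C \in \COP_n$ and $-R \le y_i \le R$ for all $i$. Since $\|y^\ast\|_\infty \le R_0 < R$, the point $y^\ast$ remains feasible and hence optimal, while every feasible point of~\eqref{CP-1} is feasible for~\eqref{cop-program}; the optimal values therefore coincide.

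Next I would verify the two hypotheses of~\Cref{THM:main} for~\eqref{CP-1}. Condition~\eqref{intSPN} holds by taking $\tilde P = P \oplus \operatorname{Diag}(R\mathbf 1 - \bar y) \oplus \operatorname{Diag}(R\mathbf 1 + \bar y)$ and $\tilde N = N \oplus 0 \oplus 0$, where $\sum_i\bar y_iA_i - C = P + N$ is the decomposition guaranteed for~\eqref{cop-program}: indeed $\tilde N$ is entrywise nonnegative, $\tilde P$ is positive definite with $\lambda_{\min}(\tilde P) \ge \min(\lambda_{\min}(P), 1) \ge 2^{-\poly(n)}$ (using $R - \bar y_i, R + \bar y_i \ge 1$), all data is $\poly(n)$-bit, and $\sum_i\bar y_i\tilde A_i - \tilde C = \tilde P + \tilde N$. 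For~\eqref{PBOS} of the relaxations of~\eqref{CP-1}: writing $N := n + 2m$ and using $\MQ_N^{(r)} \subseteq \MK_N^{(r)} \subseteq \COP_N$, every $y$ feasible for~\eqref{eq:conic-prog1} (resp.~\eqref{eq:conic-prog2}) of~\eqref{CP-1} makes $\sum_i y_i\tilde A_i - \tilde C$ copositive, so the diagonal entries in the appended $2m$ coordinates are nonnegative, i.e.\ $-R \le y_i \le R$; thus the feasible region of the relaxation lies in $[-R,R]^m$. It is closed (a spectrahedron) and nonempty, since $\tilde P + \tilde N \in \SPN_N = \MK_N^{(0)} = \MQ_N^{(0)} \subseteq \MK_N^{(r)}$ (resp.~$\MQ_N^{(r)}$) by monotonicity of the cones in $r$. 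Hence the relaxation attains its optimum, at a point of $[-R,R]^m$ with $R \le 2^{\poly(n)}$, which is exactly~\eqref{PBOS}. Then~\Cref{THM:main} applied to~\eqref{CP-1}, whose dimension $n + 2m$ is polynomial in the input size, yields the claimed running time $\poly(n, \log(1/\ep))$.

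The step requiring the most care is the bookkeeping of sizes: one must choose $R$ large enough to retain both $y^\ast$ and the $\SPN$-point $\bar y$, yet still $\poly(n)$-bit, and then check that the spectral and bit-size bounds in~\eqref{intSPN} survive passing to dimension $n + 2m$. Beyond that I do not anticipate a genuine obstacle — in particular, the crucial fact that the relaxation of the bounded program is automatically bounded is nothing more than the defining inclusion $\MK_N^{(r)} \subseteq \COP_N$ together with the block-diagonal structure.
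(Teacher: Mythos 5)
Your proposal is correct and follows essentially the same route as the paper: augment the program with a diagonal block encoding the box constraints $-R\le y_i\le R$ (the paper uses interleaved entries $2R\pm y_i$, you use two blocks $\mathrm{Diag}(R\mathbf 1\mp y)$, which is immaterial), observe the optimal value is unchanged via the block structure, inherit~\eqref{intSPN} by the block-diagonal decomposition $\tilde P+\tilde N$, and get~\eqref{PBOS} for the relaxations because membership in $\MK^{\smash{(r)}}\subseteq\COP$ forces the appended diagonal entries to be nonnegative, with attainment from closedness and nonemptiness of the feasible region. Your explicit verification of nonemptiness via $\SPN=\MK^{\smash{(0)}}\subseteq\MK^{\smash{(r)}}$ is a small point the paper leaves implicit; otherwise the arguments coincide.
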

An important application of~\Cref{THM:main2} is to 
the \emph{chromatic number} $\chi(G)$ of a graph $G$. 
The chromatic number is the minimum number of colors required to color the vertices of $G$ in such a way that adjacent vertices have different colors. Computing $\chi(G)$ is a classical NP-hard problem. It can be formulated as a copositive program~\cite{GvozdenovicLaurent}, see~\eqref{chromatic-number-cop} below. While it is not clear whether~\Cref{THM:main} applies to this program, we show in \Cref{section:bounded-CP} that~\Cref{THM:main2} does.

\subsubsection*{Necessity of our assumptions}

As our final contribution, we provide some illustrative pathological examples of copositive programs in \Cref{section:examples}. In particular,
\begin{itemize}
    \item \Cref{ex:khachiyan-type} provides a copositive program of small bit size whose feasible solutions all contain entries of doubly-exponential size (in $n$). This example generalizes a construction of Khachiyan for semidefinite programs, see ~\Cref{SEC:related-word} below. This example indicates that a boundedness assumption on the feasible region of~\eqref{cop-program} is needed.
    \item \Cref{ex:SDPnotfeasible} provides a feasible copositive program whose semidefinite relaxations \eqref{eq:conic-prog1} and \eqref{eq:conic-prog2} are not feasible for any~${r \in \N}$. This shows that some sort of strict feasibility condition on~\eqref{cop-program} is required. 
    \item \Cref{ex:PBOSSDPneeded} provides a copositive program that satisfies conditions~\eqref{PBOS} and \eqref{intSPN}, and whose semidefinite relaxations \eqref{eq:conic-prog1} and \eqref{eq:conic-prog2} are feasible, but do \emph{not} satisfy~\eqref{PBOS}. In particular, these relaxations cannot be computed in polynomial time by traditional means. This example shows that the first condition of \Cref{THM:main} is necessary in the sense that it cannot be replaced by the first condition of \Cref{THM:main2} in general.
\end{itemize}

\subsection{Technical overview}

We briefly describe the central ingredients of the proof of our main result \Cref{THM:main}, which shows that the relaxations~\eqref{eq:conic-prog1} and \eqref{eq:conic-prog2} obtained by replacing the copositive cone in~\eqref{cop-program} by the cones~$\MK_n^{\smash{(r)}}$ and $\MQ_n^{\smash{(r)}}$, respectively, can (approximately) be computed in polynomial time. 

First, we leverage the SoS structure of these cones to reformulate \eqref{eq:conic-prog1} and \eqref{eq:conic-prog2} as semidefinite programs (SDPs) in standard form. Importantly, under conditions \eqref{PBOS} and \eqref{intSPN}, the bit size of these semidefinite programs is polynomial in the input size of the original program~\eqref{cop-program}.

Next, we show that these SDPs can be solved  in polynomial time (up to arbitrary fixed precision). For this, we will use~\Cref{thm:approx sol}, due to de Klerk \& Vallentin~\cite{dKV16}. In order to apply their result, we have to exhibit a feasible solution $X_0$ to the SDP formulations of small bit size which satisfies
\[
    B(X_0,R_1) \subseteq \mathcal R \subseteq B(X_0,R_2),
\]
where $\mathcal R$ is the feasible region of the SDP, and $R_1, R_2 \in \Q$ have small bit size. That is to say, we must show the feasible regions of these SDPs contain a ball which is not too small, and are contained in a ball which is not too large. We show that the former requirement follows from condition~\eqref{intSPN}. Indeed, starting from the $n\times n$ matrix given by property (\ref{intSPN}) we construct the matrix $X_0$ (of an appropriate size) and find $R_1\in \mathbb{Q}$ satisfying the conditions. On the other hand, the latter does not follow easily from~\eqref{PBOS}. Indeed, even if the conic formulations~\eqref{eq:conic-prog1}, \eqref{eq:conic-prog2} have a polynomially bounded feasible solution, this does not automatically mean their SDP formulations do as well.

The key point is that we have to control the magnitude of the coefficients that appear in the sum-of-squares proofs of membership in the cones $\MK_n^{\smash{(r)}}$ and $\MQ_n^{\smash{(r)}}$ on which the SDP formulations rely. To do so, we prove a general result (\Cref{thm:SoS bound coeff-1}) bounding the coefficients of certain sum-of-squares proofs, which we believe to be of independent interest. We then apply this result to obtain bounds in our setting (see \Cref{coro-SoS}).
 
To prove~\Cref{thm:SoS bound coeff-1}, we first derive an upper bound on the size of the coefficients of a polynomial in terms of its supremum norm over an appropriate subset of~ $\R^n$ (see \Cref{lemma-use}). By applying this bound to `both sides' of a sum-of-squares proof, we are able to obtain a bound on the coefficients that appear on the `SoS side' of the proof.

We derive our remaining contributions from \Cref{THM:main}; either by verifying conditions~\eqref{PBOS} and~\eqref{intSPN} directly for specific classes of copositive programs (see \Cref{section:SQP}), or by modifying copositive programs so that these conditions are guaranteed to hold (see \Cref{section:bounded-CP}). 

Finally, to construct our pathological examples in~\Cref{section:examples}, we combine an existing construction of Khachiyan for semidefinite programs with copositive programs arising from specific graphs, including the cycle $C_5$.

\subsection{Related work}\label{SEC:related-word}

This work is closely related the following fundamental open question on the complexity of semidefinite programming: \emph{Can the feasibility of an SDP with well-behaved coefficients be decided in polynomial time?} 
The primary obstacle to resolving this question is the fact that there exist SDPs whose feasible solutions all have exponential bit size, and thus cannot even be written down in polynomial time. (In contrast, \emph{linear} programs with well-behaved coefficients always admit well-behaved feasible solutions). 
These examples go back to work of Khachiyan, see~\cite{Pataki2024} for a discussion.

Recently, O'Donnell~\cite{odonnell:LIPIcs.ITCS.2017.59} showed that such examples can even arise from the sum-of-squares hierarchy for polynomial optimization. In fact, this may already happen for optimization over the binary cube $\{-1, 1\}^n$. 
A common (but erroneous) belief prior to his work was that a standard compactness assumption for polynomial optimization problems (namely, \emph{explicit boundedness}\footnote{Explicit boundedness is a slight strengthening of the \emph{Archimedean condition}, see \cite[Rem. 3]{GPS23}.}) would preclude such pathologies.

Subsequently, Raghavendra \& Weitz~\cite{RaghavendraWeitz2017} and Gribling, Polak \& Slot~\cite{GPS23} gave sufficient conditions for polynomial optimization problems to guarantee that their SoS relaxations can be computed in polynomial time (up to a small additive error). 
Together, these conditions cover many of the problems considered in the theoretical computer science and optimization communities (see \cite{BharathiM21,BharathiM22,BulatovRSTOC22,BulatovRSTACS22,Mastrolilli21TALG} for recent applications). However, neither apply to the SoS relaxations of copositive programs that we consider in this paper. To the best of the authors' knowledge, this work is the first to establish conditions under which polynomial-time computability of the relaxations~\eqref{eq:conic-prog1}, \eqref{eq:conic-prog2} of a copositive program~\eqref{cop-program} can be guaranteed.

We briefly compare the conditions of~\Cref{THM:main} to those of~\cite{RaghavendraWeitz2017, GPS23}. Condition~\eqref{PBOS} is closely related to explicit boundedness: both guarantee the existence of bounded optimal solutions, respectively to \eqref{eq:conic-prog1}, \eqref{eq:conic-prog2} and \eqref{EQ:lasserre}. (For both copositive programs and polynomial optimization problems, boundedness of the original problem does not guarantee boundedness of the relaxations.) An important difference however, is that explicit boundedness can be readily verified, whereas in general, condition~\eqref{PBOS} cannot. This limitation is addressed, to some extent, by \Cref{THM:main2}. 
Condition~\eqref{intSPN} is similar to strict feasibility conditions imposed in~\cite{RaghavendraWeitz2017, GPS23}, see e.g., Theorem 5 and Proposition 6 in~\cite{GPS23}. It is unclear how to verify these types of conditions in general, and both the present work and~\cite{RaghavendraWeitz2017, GPS23} are able to do so only in specific instances.

\section{Preliminaries}

We recall some definitions and results that will be used in the rest of the paper.

\subsection{Notations}
The set of real symmetric matrices of dimension $n\times n$ will be denoted by $\mathcal{S}^n$. The space $\mathcal S^n$ is equipped with the Frobenius norm, defined as $\Vert X \Vert_F=\sqrt{\langle X,X\rangle}$, where the inner product is given by $\langle X,Y\rangle:=\mathrm{Tr}(XY)$, with $\mathrm{Tr}$ the trace operator.
For a matrix $A\in \mathcal S^n$, we write $A\succeq 0$ to indicate that $A$ is positive semidefinite, and $A\succ 0$ to indicate that $A$ is positive definite. The set of positive semidefinite matrices of dimension $n\times n$ is denoted by $\mathcal S^+_n$. The minimum eigenvalue of $A$ is denoted by $\lambda_{\min}(A)$. Additionally, we write $A\geq 0$ to denote that $A$ has nonnegative entries. The cone of $n\times n$ symmetric matrices with nonnegative entries is denoted by $\mathcal{N}_n$.  
Given scalars $b_1,\dots,b_n$, we denote by $\mathrm{Diag}(b_1,\dots,b_n)$ the $n\times n$ diagonal matrix with diagonal elements $b_1,\dots,b_n$. The direct sum of the matrices $A_1,\dots,A_n$ is denoted by $A_1\oplus\dots\oplus A_n$.
For a multi-index $\alpha\in \mathbb{N}^n$, we let $|\alpha|:= \| \alpha\|_1=\sum_{i=1}^n\alpha_i$.
We denote by $\R[x]$ the space of $n$-variate, real polynomials. For $x \in \R^n$, $d \in \N$, we write $[x]_d = (x^\alpha)_{|\alpha| \leq d}$ for the vector of all monomials of degree at most $d$.

\subsection{Bit size} \label{SEC:bitsize}
In the following discussion, we will frequently refer to the \emph{bit size} of the coefficients of polynomials and other input elements. The bit size of an integer $z\in \mathbb Z$ is defined as the number $\size(z) := 1+ \lceil \log_2(|z|+1) \rceil$ of bits required to encode $z$ as a binary string.
For a rational number $q=\frac{q_1}{q_2}$, the bit size is defined as $\size(q) := 1+ \lceil \log_2(|q_1|+1) \rceil + \lceil \log_2(|q_2|+1) \rceil$ accounting for the bit size of both the numerator $q_1 $and denominator $q_2$. The bit size of an $m\times n$ matrix $A$ is given by $\size(A)= mn + \sum_{i,j} \size(A_{ij})$, see also \cite{Schrjiver-ilp_theory}. 
More generally, the bit size of a mathematical program is the cumulative number of bits necessary to encode all its input elements as binary strings.
Throughout, we will assume that the copositive programs~\eqref{cop-program} we consider have polynomial bit size in~$n$. In particular, the entries of the matrices defining these programs are assumed to be rational and encodable in a number of bits polynomial in $n$.

\subsection{Semidefinite programming} 
Let $C\in \mathbb Q^{n\times n}$, $A_i\in \mathcal S^n$ and $b_i \in \mathbb Q$ for $1\leq i\leq m$ and consider the following semidefinite program in standard form:
\begin{equation}\label{eq:SDP} \tag{SDP}
    \mathrm{val} = \inf \{\langle C,X\rangle : \langle A_i,X\rangle= b_i,\ i=1,\dots,m,\ X\succeq 0\},
\end{equation}
with feasible region
\begin{equation*}
    \mathcal R := \{X\succeq 0 :\ \langle A_i,X\rangle = b_i,\ i=1,\dots,m\}.
\end{equation*}
Under certain assumptions, it is possible to solve the program \eqref{eq:SDP} in polynomial time (up to a small additive error) as shown in \cite{Grtschel1981TheEM} and \cite{dKV16} (using the Ellipsoid algorithm and interior point methods, respectively). We will rely on this fact to show polynomial time computability for the relaxations~\eqref{eq:conic-prog1} and~\eqref{eq:conic-prog2} in~\Cref{section:main-results} below.
We use the following formulation of the main result of~\cite{dKV16}:

\begin{thm}[{\cite[Thm. 1.1]{dKV16}, see also \cite[Thm. 15]{GPS23}}] \label{thm:approx sol}
Let $R_1,R_2 >0$ be given and suppose that there exists an $X_0\in \mathcal R$ so that:
\begin{equation*}
    B(X_0,R_1) \subseteq \mathcal R \subseteq B(X_0,R_2),
\end{equation*}
where $B(X_0,\lambda)$ is the ball of radius $\lambda\in \mathbb R$ (in the Frobenius norm $\Vert \cdot \Vert_F$) centered at $X_0$ in the subspace
\begin{equation*}
    V = V(\mathcal R) := \{X\in \mathcal S^n : \langle A_i,X\rangle = b_i, \ 1\leq i\leq m\}.
\end{equation*}
Then for any rational $\varepsilon>0$ one can find a rational matrix $X^*\in\mathcal R$ such that
\begin{equation*}
    \langle C,X^*\rangle - \mathrm{val} \leq \varepsilon
\end{equation*}
in time polynomial in $n,m,\log(R_2/R_1),\log(1/\varepsilon)$ and the bit size of the input data $C,A_i,b_i$ and the feasible point $X_0$. 
\end{thm}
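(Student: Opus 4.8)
The plan is to recast the statement as a \emph{weak optimization} problem over a well-bounded convex body and to solve it with the ellipsoid method in the Grötschel--Lovász--Schrijver (GLS) framework; the original proof of de Klerk \& Vallentin instead uses an interior-point method, which is what yields the stated dependence on $\log(R_2/R_1)$, and I will indicate this route at the end.

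\emph{Step 1: reduce to the feasible subspace and build a separation oracle.} Every feasible $X$ satisfies the affine constraints, so $\mathcal R$ is a convex subset of the affine subspace $V$. By Gaussian elimination on the homogeneous system $\langle A_i, X\rangle = 0$ one obtains, in polynomial time and with polynomial-bit-size rational data (subdeterminant bounds), an explicit parametrization of $V$ by $\R^N$ with $N = \dim V \le \binom{n+1}{2}$. In these coordinates $\mathcal R$ is a convex body with $B(X_0,R_1)\subseteq\mathcal R\subseteq B(X_0,R_2)$, hence full-dimensional in $V$ and compact, so the infimum defining $\mathrm{val}$ is attained. On $V$ the only remaining constraint is $X\succeq 0$, for which a \emph{strong} separation oracle is available: run symmetric Gaussian elimination ($LDL^\top$ factorization) on $X$; this is exact rational arithmetic with polynomial-size intermediates, and either all pivots are nonnegative (so $X\in\mathcal R$) or the elimination produces a rational $v$ with $v^\top X v < 0$, in which case the hyperplane $\{Y:\langle vv^\top,Y\rangle\ge 0\}$, orthogonally projected onto $V$, separates $X$ from $\mathcal R$ and has polynomial bit size.

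\emph{Step 2: run the ellipsoid method.} Feeding this oracle, the ball data $X_0,R_1,R_2$, and the projection of $C$ onto $V$ into the GLS weak-optimization algorithm returns, in time polynomial in $N$, $\log(R_2/R_1)$, $\log(1/\varepsilon)$, and $\size(C,A_i,b_i,X_0)$, a rational $X'\in V$ lying within Frobenius distance $\delta$ of $\mathcal R$ with $\langle C,X'\rangle \le \mathrm{val}+\delta'$, where $\delta,\delta'$ can be driven below any prescribed rational threshold at polynomial cost. (Compactness inside $B(X_0,R_2)$ and the interior ball $B(X_0,R_1)\subseteq\mathcal R$ are exactly the inputs the ellipsoid method requires.) To produce an \emph{exactly} feasible point, push $X'$ slightly towards the deep interior point $X_0$: set $X^* := (1-\mu)X' + \mu X_0$. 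Since $B(X_0,R_1)\subseteq\mathcal R$, a point within distance $\delta$ of $\mathcal R$ becomes feasible once $\mu \ge \delta/(R_1+\delta)$; choosing $\mu$ minimal and rational of polynomial bit size gives $X^*\in\mathcal R$ of polynomial bit size with
\[
\langle C,X^*\rangle - \mathrm{val} \;\le\; \delta' + \mu\,\bigl|\langle C, X_0 - X'\rangle\bigr| \;\le\; \delta' + \mu\cdot B_C,
\]
where $B_C$ is an a-priori polynomial bound on $|\langle C,X\rangle|$ over $B(X_0,R_2)$. Fixing $\delta,\delta'$ (hence the ellipsoid precision) as rationals of bit size polynomial in the input and in $\log(1/\varepsilon)$ makes the right-hand side $\le\varepsilon$.

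\emph{Main obstacle.} Conceptually this is routine once GLS is invoked; the real work is numerical bookkeeping: verifying that the subspace parametrization and the oracle's output $vv^\top$ have polynomial bit size, and that the ``move towards the center'' rounding costs only $\varepsilon$ in objective value (which needs the a-priori bound $B_C$ and a quantitative lower bound on how deep $X_0$ sits inside $\mathcal R$, both supplied by $R_1,R_2$). For the sharper running time actually claimed — polynomial in $\log(R_2/R_1)$ via interior-point methods — one instead runs a path-following scheme on the self-concordant barrier $-\log\det X$ restricted to $V$, initialized near $X_0$; the technical heart, and the genuine difficulty in de Klerk \& Vallentin's argument, is to show that the Newton iterates can be rounded to polynomial bit size at each step without destroying the convergence and centering guarantees, and that the final centered iterate can be rounded into $\mathcal R$ as above.
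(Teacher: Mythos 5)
\Cref{thm:approx sol} is not proved in this paper at all: it is imported verbatim from de Klerk \& Vallentin \cite{dKV16} (see also \cite{GPS23}), so there is no in-paper argument to measure your proposal against -- the comparison is with the cited interior-point proof. Your sketch is the classical alternative route (GLS ellipsoid method with a strong separation oracle for $X\succeq 0$, followed by pulling the almost-feasible output towards the deep interior point $X_0$), and it is essentially sound: the $LDL^\top$-based oracle, the convexity lemma showing $(1-\mu)X'+\mu X_0\in\mathcal R$ once $\mu\ge\delta/(R_1+\delta)$, and the objective-loss bound $\mu\,\|C\|_F R_2$ are all standard and correct, and the final point stays in $V$ exactly because both $X'$ and $X_0$ do. Two caveats deserve to be made explicit rather than left under ``bookkeeping''. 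First, if you parametrize $V$ by an arbitrary rational basis of the null space, the Frobenius balls $B(X_0,R_1)$, $B(X_0,R_2)$ become ellipsoids in parameter space whose eccentricity is controlled only by the conditioning of that basis; this is harmless for polynomiality (the singular values have bit size $\poly$ in the data), but it is one reason the clean dependence on $\log(R_2/R_1)$ degrades -- either work intrinsically in $V$ with the Frobenius norm or orthonormalize. Second, the ellipsoid analysis controls the \emph{iteration count} by $\log(R_2/R_1)$ but the \emph{bit length of iterates} by absolute magnitudes, so recovering the running time exactly as stated (polynomial in $n,m,\log(R_2/R_1),\log(1/\ep)$ and the bit sizes of $C,A_i,b_i,X_0$ only) requires the extra observations that a bounded spectrahedron with $\poly$-size data has circumradius $2^{\poly}$, and that when $R_2$ is tiny $X_0$ itself is already $\ep$-optimal unless $\log(1/\ep)\gtrsim\log(1/R_2)$; this is precisely where de Klerk--Vallentin's path-following argument (rounding Newton iterates on the barrier $-\log\det$) earns the sharper statement. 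For every application in this paper one has $R_1\ge 2^{-\poly(n)}$ and $R_2\le 2^{\poly(n)}$, so your ellipsoid version would serve just as well, but as a proof of the theorem \emph{as stated} it is a correct alternative only after those two points are filled in.
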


\subsection{Supremum norm and coefficient norm}\label{SEC:norms}
Let $p=\sum_{|\alpha|\leq d} c_\alpha x^\alpha$ be a multivariate polynomial of degree $d\in \mathbb N$. The \emph{coefficient norm} of $p$ is defined as 
    $$\|p\|_{\mathbb{R}[x]} = \max_{\alpha} \frac{|c_\alpha|}{\binom{|\alpha|}{\alpha}},$$
where the multinomial coefficient $\binom{|\alpha|}{\alpha}$ is given by 
    $$\binom{|\alpha|}{\alpha} = \frac{|\alpha|!}{\alpha_1!\cdot\alpha_2!\cdot\dots \cdot\alpha_n!}.$$
The coefficient norm of a polynomial can be bounded in terms of its supremum norm on $[-1, 1]^n$ as follows.
\begin{lemma}[\cite{KORDA20171}] \label{lemma-corbi}
    Let $p\in \mathbb{R}[x]_d$, then 
        $$\|p\|_{\mathbb{R}[x]} \leq 3^{d+1}\max_{x\in [-1,1]^n}|p(x)|.$$
\end{lemma}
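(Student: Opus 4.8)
The plan is to reduce the inequality to the one-variable case and invoke the extremal properties of Chebyshev polynomials; this is essentially the argument of Korda--Henrion~\cite{KORDA20171}, which I would reconstruct.

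Concretely, I would expand $p$ in the tensor-product Chebyshev basis, $p(x)=\sum_{\gamma\,:\,|\gamma|\le d}\widehat p_\gamma\prod_{i=1}^n T_{\gamma_i}(x_i)$, where $T_j$ is the degree-$j$ Chebyshev polynomial of the first kind. Since the $T_j$ are orthogonal for the probability measure $d\mu(t)=\tfrac{dt}{\pi\sqrt{1-t^2}}$ on $[-1,1]$ with $\int_{-1}^1|T_j|\,d\mu\le 1$, the coefficients satisfy $|\widehat p_\gamma|\le 2^{\,|\mathrm{supp}(\gamma)|}\sup_{[-1,1]^n}|p|\le 2^{d}\sup_{[-1,1]^n}|p|$, where the last step uses that at most $d$ of the entries of $\gamma$ can be nonzero when $|\gamma|\le d$. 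This is the crucial point where the estimate ceases to depend on the number of variables $n$. Expanding the $T_{\gamma_i}$ back into monomials gives $c_\alpha=\sum_\gamma\widehat p_\gamma\prod_i c_{\alpha_i}(T_{\gamma_i})$, where $c_{\alpha_i}(T_{\gamma_i})$ — the coefficient of $t^{\alpha_i}$ in $T_{\gamma_i}$ — vanishes unless $\gamma_i\ge\alpha_i$ and $\gamma_i\equiv\alpha_i\pmod 2$; the explicit Chebyshev coefficient formula gives $|c_{\alpha_i}(T_{\alpha_i+2m})|\le 2^{\alpha_i}\binom{\alpha_i+m}{m}$, with modulus $1$ when $\alpha_i=0$. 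Dividing by $\binom{|\alpha|}{\alpha}$ and summing over the admissible $\gamma$ (equivalently, over offsets $m\ge 0$ with $|\alpha|+2|m|\le d$) yields the bound; the multinomial normalization $\binom{|\alpha|}{\alpha}$ is precisely what keeps the contribution of the coordinates outside $\mathrm{supp}(\alpha)$ under control.

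The step I expect to be the main obstacle is the bookkeeping of constants in this summation. A crude estimate produces either a spurious polynomial factor in $n$ (coming from the many coordinates on which $\gamma$ may be supported) or an exponential constant strictly larger than $3$, so one must group the terms carefully according to $\mathrm{supp}(\gamma)$ — or trade the $\ell^\infty$-bound on the $\widehat p_\gamma$ for the $L^2$-identity $\sum_\gamma|\widehat p_\gamma|^2\,2^{-|\mathrm{supp}(\gamma)|}=\int_{[-1,1]^n}|p|^2\,d\mu^{\otimes n}\le\sup_{[-1,1]^n}|p|^2$ together with Cauchy--Schwarz — to land exactly on $3^{d+1}$. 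A closely related route, which makes the $n$-independence transparent, is to reduce to homogeneous $p$ via ray restrictions $t\mapsto p(tx)$ and then use the multilinear polarization identity: for $h$ homogeneous of degree $k=|\alpha|$, $\frac{c_\alpha(h)}{\binom{k}{\alpha}}=\frac{1}{2^{k}k!}\sum_{\epsilon\in\{\pm1\}^{k}}\epsilon_1\cdots\epsilon_{k}\,h\big(\textstyle\sum_j\epsilon_j e_{i_j}\big)$, with $e_{i_j}$ the standard basis vectors (each $e_i$ appearing $\alpha_i$ times), so every argument lies in $\{-\|\alpha\|_\infty,\dots,\|\alpha\|_\infty\}^n$ and homogeneity gives $\frac{|c_\alpha(h)|}{\binom{k}{\alpha}}\le\frac{\|\alpha\|_\infty^{k}}{k!}\sup_{[-1,1]^n}|h|\le e^{k}\sup_{[-1,1]^n}|h|$; the delicate point in this route is transferring cleanly from $p$ to its homogeneous parts without inflating the constant. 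The remaining ingredients — the bound on $\widehat p_\gamma$, the Chebyshev coefficient formulas, the ray restriction, and the polarization identity — are routine.
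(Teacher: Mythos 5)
A remark on context first: the paper does not prove this lemma at all — it is imported verbatim from \cite{KORDA20171} — so there is no internal proof to compare your route against, and your reconstruction has to stand on its own. As written, it does not: the two features that constitute the entire content of the statement, namely that the bound is independent of the number of variables $n$ and that the constant is as small as $3^{d+1}$, are precisely the points your sketch leaves open, and the fixes you gesture at do not visibly deliver them.

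Concretely, in the Chebyshev route the ingredients you quote are all correct ($|\widehat p_\gamma|\le 2^{|\mathrm{supp}(\gamma)|}\sup_{[-1,1]^n}|p|$, the coefficient formula for $T_{\alpha_i+2m}$, the Parseval identity), but the final summation over admissible $\gamma=\alpha+2m$ is genuinely $n$-dependent: for every coordinate $i$ outside $\mathrm{supp}(\alpha)$ the factors $T_{2m_i}$ contribute their constant terms, of modulus exactly $1$, which nothing damps, and the number of admissible offsets $m$ grows like $n^{\lfloor (d-|\alpha|)/2\rfloor}$. Neither "grouping by $\mathrm{supp}(\gamma)$" nor the Cauchy--Schwarz/Parseval variant removes these terms (the $\ell^2$ bound on $\widehat p_\gamma$ still gets multiplied by the square root of this polynomially-in-$n$ large count). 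The missing idea is of a different nature, e.g.\ first setting to zero all variables outside $\mathrm{supp}(\alpha)$ — this preserves $c_\alpha$, does not increase the sup norm, and reduces to at most $|\alpha|\le d$ variables — and even after such a reduction one still has to work to reach $3^{d+1}$ rather than some larger $C^{d}$. The polarization route has a sharper defect: it is fine for homogeneous $p$ (constant $k^k/k!\le e^k$), but the transfer to homogeneous parts via ray restrictions $t\mapsto p(tx)$ costs the univariate V.~A.~Markov coefficient constant, which for the top-degree part is of order $2^{d-1}$; the combined constant is then of order $(2e)^d$, which already exceeds $3^{d+1}$ for small $d$. So what you call the "delicate point" of transferring to homogeneous parts is not bookkeeping but an actual obstruction to this route reaching the stated constant. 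Since the dimension-freeness and the explicit constant $3^{d+1}$ are the lemma, deferring exactly these steps leaves a genuine gap.
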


\Cref{lemma-corbi} allows us to upper bound the size of the largest coefficient of a polynomial in terms of its supremum norm on a small ball centered at the origin. This will be useful in~\Cref{section:bounded-coeff}, where we analyze the coefficients of sum-of-squares decompositions of certain well-behaved polynomials.
\begin{lemma}\label{lemma-use}
Let $p=\sum_{|\alpha|\leq d}c_\alpha x^\alpha$, and let $0<r<n$ be a real number. Then, we have
  $$\max_{|\alpha|\leq d} |c_\alpha| \leq 3^{d+1}d! \Big(\frac{n}{r}\Big)^{\frac{d}{2}}\max_{\sum_{i=1}^nx_i^2\leq r}|p(x)|.$$  
\end{lemma}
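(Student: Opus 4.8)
The plan is to reduce to Lemma \ref{lemma-corbi} by a change of variables that rescales the ball $\{\sum_i x_i^2 \le r\}$ to something on which the sup-norm bound of Lemma \ref{lemma-corbi} applies, and then to convert the resulting bound on the coefficient norm $\|p\|_{\mathbb R[x]}$ back to a bound on the plain coefficients $|c_\alpha|$.

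First I would relate $|c_\alpha|$ to $\|p\|_{\mathbb R[x]}$. Since $\|p\|_{\mathbb R[x]} = \max_\alpha |c_\alpha|/\binom{|\alpha|}{\alpha}$ and $\binom{|\alpha|}{\alpha} = |\alpha|!/(\alpha_1!\cdots\alpha_n!) \le |\alpha|! \le d!$ for $|\alpha| \le d$, we get $\max_{|\alpha|\le d}|c_\alpha| \le d!\,\|p\|_{\mathbb R[x]}$. So it suffices to bound $\|p\|_{\mathbb R[x]}$ by $3^{d+1}(n/r)^{d/2}\max_{\sum x_i^2 \le r}|p(x)|$.

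Next I would apply the scaling. The cube $[-1,1]^n$ is contained in the ball of radius $\sqrt n$ centered at the origin, so $[-\sqrt{r/n},\sqrt{r/n}\,]^n \subseteq \{x : \sum_i x_i^2 \le r\}$. Define $q(x) := p(\sqrt{r/n}\,x)$; then $q \in \mathbb R[x]_d$ and, writing $p = \sum_\alpha c_\alpha x^\alpha$, the coefficients of $q$ are $c_\alpha (r/n)^{|\alpha|/2}$. Applying Lemma \ref{lemma-corbi} to $q$ gives
\[
    \max_\alpha \frac{|c_\alpha| (r/n)^{|\alpha|/2}}{\binom{|\alpha|}{\alpha}} = \|q\|_{\mathbb R[x]} \le 3^{d+1}\max_{x\in[-1,1]^n}|q(x)| = 3^{d+1}\max_{x\in[-1,1]^n}\Big|p\big(\sqrt{r/n}\,x\big)\Big| \le 3^{d+1}\max_{\sum_i x_i^2 \le r}|p(x)|,
\]
where the last inequality uses $[-\sqrt{r/n},\sqrt{r/n}\,]^n \subseteq \{\sum_i x_i^2 \le r\}$. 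Since $(r/n)^{|\alpha|/2} \ge (r/n)^{d/2}$ precisely when $r/n \le 1$ (which holds as $0 < r < n$), we have $|c_\alpha|/\binom{|\alpha|}{\alpha} \le (n/r)^{|\alpha|/2}\cdot\|q\|_{\mathbb R[x]}\cdot(\dots)$; more carefully, from the displayed inequality $|c_\alpha|(r/n)^{|\alpha|/2} \le \binom{|\alpha|}{\alpha}\,3^{d+1}\max_{\sum x_i^2\le r}|p|$, and since $(n/r)^{|\alpha|/2} \le (n/r)^{d/2}$ for $|\alpha|\le d$ (because $n/r \ge 1$), we obtain $|c_\alpha| \le \binom{|\alpha|}{\alpha}(n/r)^{d/2}3^{d+1}\max_{\sum x_i^2\le r}|p|$. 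Combining with $\binom{|\alpha|}{\alpha}\le d!$ finishes the proof.

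I do not expect a serious obstacle here; the only points requiring care are getting the direction of the exponent inequality right (using $0 < r < n$ so that $n/r \ge 1$, hence $(n/r)^{|\alpha|/2} \le (n/r)^{d/2}$) and correctly tracking how the coefficient norm transforms under the dilation $x \mapsto \sqrt{r/n}\,x$. Everything else is bookkeeping with the multinomial coefficients and the inclusion $[-1,1]^n \subseteq \{\sum_i x_i^2 \le n\}$.
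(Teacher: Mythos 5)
Your proposal is correct and follows essentially the same route as the paper: dilate by $a=\sqrt{r/n}$, apply Lemma~\ref{lemma-corbi} to $q(x)=p(ax)$ together with the inclusion of the scaled cube in the ball $\{\sum_i x_i^2\le r\}$, and use $\binom{|\alpha|}{\alpha}\le d!$ and $(n/r)^{|\alpha|/2}\le (n/r)^{d/2}$ to pass back to the plain coefficients. The only difference is cosmetic (you apply the multinomial bound at the end rather than at the start), so there is nothing further to add.
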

\begin{proof}
    Observe that Lemma \ref{lemma-corbi} implies 
        $$\max_{|\alpha|\leq d} |c_\alpha| \leq 3^{d+1}d! \max_{x\in [-1,1]^n}|p(x)|,$$
    and moreover 
    \begin{align}\label{ineq-use}
        \max_{|\alpha|\leq d} |c_\alpha| \leq 3^{d+1}d! \max_{\sum_{i=1}^nx_i^2\leq n}|p(x)|.
    \end{align}
    Given a real number $0<a<1$, we consider the polynomial $q(x):= p(a\cdot x)$ obtained by scaling the variables by a factor of $a$. Thus, we have
        $$q = \sum_{|\alpha|\leq d} a^{|\alpha|}c_\alpha x^\alpha.$$
    By applying inequality (\ref{ineq-use}) to polynomial $q$, we obtain 
        $$\max_{|\alpha|\leq d} |a^{|\alpha|}c_\alpha| \leq 3^{d+1}d! \max_{\sum_{i=1}^nx_i^2\leq n}|p(a\cdot x)|.$$
    By substituting $y_i=ax_i$, and using $a^d \leq a^{|\alpha|}$, we get the following inequality:
        $$a^d\max_{|\alpha|\leq d}|c_\alpha| \leq 3^{d+1}d! \max_{\sum_{i=1}^ny_i^2\leq na^2}|p(y)|.$$
    Finally, we set $a=\sqrt{\frac{r}{n}}$ and obtain the desired result.
\end{proof}

\section{Bounding the coefficients in sum-of-squares proofs} \label{section:bounded-coeff}

In this section, we present a  sufficient condition that guarantees that the coefficients in certain sum-of-squares proofs of nonnegativity have polynomial bit size in $n$. We apply this condition in~\Cref{section:main-results} to prove our main result. We first recall a useful result from~\cite{GPS23}:

\begin{lemma}\label{lemma-ball}
    Let $K=\{x\in \mathbb{R}^n: g_1(x)\geq 0, \dots, g_m(x)\geq 0\}$ be a full-dimensional semialgebraic set, defined by polynomials $g_1, \ldots, g_m$ whose coefficients have polynomial bit size in $n$. Assume there exists $x^* \in \R^n$, with $\|x^*\|_2 \leq 2^{\poly(n)}$ such that $g_i(x^*)>0$ for all $i=1, \dots, m$. Then, there exists $r \in \R$ with $2^{-\poly(n)} \leq r \leq 2^{\poly(n)}$ such that 
    \[
    B_r(x^*)= \{y\in \mathbb{R}^n: \|y-x^*\|_2\leq r\}\subseteq K.
    \]
\end{lemma}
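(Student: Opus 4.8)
The plan is to directly estimate the distance from $x^*$ to the boundary of $K$. Since $B_r(x^*)\subseteq K$ holds precisely when $g_i(y)\geq 0$ for every $i\in[m]$ and every $y$ with $\|y-x^*\|_2\leq r$, it suffices to exhibit, for each $i$ separately, a radius $r_i\geq 2^{-\poly(n)}$ on which $g_i$ stays nonnegative; one then takes $r:=\min(\min_i r_i,\,1)$, which automatically satisfies the (trivial) upper bound $r\leq 2^{\poly(n)}$ in the statement. Note also that the hypothesis that $K$ is full-dimensional is automatic here: since $g_i(x^*)>0$ and each $g_i$ is continuous, $x^*$ already lies in the interior of $K$.

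Fix $i$. First I would record a lower bound $g_i(x^*)\geq 2^{-\poly(n)}=:\delta_i$. Since $g_i$ has polynomial bit size (hence polynomially bounded degree and coefficients) and $x^*$ is a rational point of polynomial bit size with $\|x^*\|_2\leq 2^{\poly(n)}$ — as is the case in all our applications, e.g.\ for the strictly feasible points produced by condition~\eqref{intSPN} — the value $g_i(x^*)$ is a nonzero rational number whose numerator and denominator have polynomial bit size, so $g_i(x^*)\geq 2^{-\poly(n)}$. Second, I would bound the local variation of $g_i$ near $x^*$: writing $g_i=\sum_{|\alpha|\leq d}c_\alpha x^\alpha$, an elementary estimate of $\|\nabla g_i(y)\|_2$ over the ball $\{\|y-x^*\|_2\leq 1\}$ — using the coefficient bound, the bound on $\|x^*\|_2$, and the number $\binom{n+d}{d}$ of monomials — yields a Lipschitz constant $L_i\leq 2^{\poly(n)}$ with $|g_i(y)-g_i(x^*)|\leq L_i\|y-x^*\|_2$ whenever $\|y-x^*\|_2\leq 1$. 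Consequently, for $\|y-x^*\|_2\leq r_i:=\min(1,\,\delta_i/L_i)$ we get $g_i(y)\geq \delta_i-L_i r_i\geq 0$, and $r_i\geq 2^{-\poly(n)}$ as required. (Alternatively, restricting $g_i$ to the segment from $x^*$ to a point where $g_i<0$ gives a univariate polynomial of degree $\leq d$ whose first root is bounded below via Cauchy's root bound by $g_i(x^*)/(2\max|\text{coeff}|)$, which is the same estimate in disguise.) Combining over $i$ and setting $r:=\min\{r_1,\dots,r_m,1\}$ gives $B_r(x^*)\subseteq K$ with $2^{-\poly(n)}\leq r\leq 2^{\poly(n)}$.

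The only genuinely delicate point — beyond routine bookkeeping — is the first step: one needs $g_i(x^*)$ bounded away from $0$ by $2^{-\poly(n)}$, which requires $x^*$ to have polynomial bit size rather than merely polynomially bounded norm. This is harmless in our setting, since the strictly feasible points we use are rational of polynomial bit size; more generally, before applying the argument one may replace $x^*$ by such a point, using that the nonempty open semialgebraic set $\{x:g_i(x)>0\ \forall i\}\cap\{\|x\|_2<2^{\poly(n)}\}$, being described with polynomial bit size, contains a rational point of polynomial bit size. All remaining ingredients (the gradient/Lipschitz estimate and the per-constraint reduction) are standard.
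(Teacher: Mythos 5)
The paper itself gives no proof of this lemma: it is imported verbatim from~\cite{GPS23}, so there is no internal argument to compare against. Judged on its own, your proof is the natural elementary one and is essentially correct. The per-constraint reduction, the lower bound $\delta_i\leq g_i(x^*)$, and the Lipschitz estimate $L_i\leq 2^{\poly(n)}$ for $g_i$ on the unit ball around $x^*$ (valid because the degrees, the coefficient sizes and $\|x^*\|_2$ are all polynomially bounded in the paper's setting) combine via the mean value theorem to give the radius $r=\min_i\min(1,\delta_i/L_i)\geq 2^{-\poly(n)}$, and the upper bound $r\leq 2^{\poly(n)}$ is indeed vacuous. What your route buys over the paper's citation is a short, self-contained, quantitative proof; it loses nothing for the way the lemma is used here.

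Your caveat about needing $g_i(x^*)\geq 2^{-\poly(n)}$ is not a formality, and it is the one point worth recording: as literally stated, with only a norm bound on $x^*$, the lemma is false. Take $m=1$, $g(x)=x_1$ and $x^*=(2^{-2^n},0,\dots,0)$; all hypotheses hold, yet any ball centred at $x^*$ and contained in $K=\{x_1\geq 0\}$ has radius at most $2^{-2^n}$. So the statement implicitly assumes something like $\size(x^*)\leq\poly(n)$ or $\min_i g_i(x^*)\geq 2^{-\poly(n)}$, exactly as you say, and this is harmless for the paper: in its only application, the proof of Theorem~\ref{thm:SoS bound coeff-1}, the centre $z$ is rational of bit size $\poly(n)$ and the lemma is applied to $K'=\{g_i\geq\varepsilon/2\}$, where the slack at $z$ is $\varepsilon/2\geq 2^{-\poly(n)}$, so your argument goes through verbatim there. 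One warning about your fallback fix: replacing $x^*$ by a rational point of polynomial bit size in $\{x: g_i(x)>0\ \forall i\}$ changes the centre of the ball (so it proves a variant of the lemma, not the statement as written), and the existence of such a rational point in a semialgebraic set of small description is itself a nontrivial fact of the same flavour as the lemma, so it should not be invoked as free; the clean resolution is simply to add the missing hypothesis, which is all that is ever used.
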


Combining \Cref{lemma-ball} with \Cref{lemma-use}, we obtain the following.
\begin{thm}\label{thm:SoS bound coeff-1}
    Let $f, g_1, \dots, g_m\in \mathbb{R}[x]$ be polynomials whose coefficients are rational and can be encoded with $\poly(n)$ bits. Let
        $$K=\{x\in \mathbb{R}^n: g_i(x)\geq 0 \text{ for } i=1, \dots, m\}.$$
    Assume there exists $z\in K$ a rational point with $\size(z)\leq \poly(n)$ such that $g_i(z)>0$ for all $i=1, \dots, m$. Let $d\in \mathbb{N}$ be a constant, and assume there exists a degree $d$ proof of nonnegativity of $f$ over $K$ 
        $$f= \sum_{j=1}^{s_1} p_j^2 + \sum_{i=1}^{m} g_i\cdot\Big(\sum_{j=1}^{s_i} q_{j,i}^2\Big).$$
    Then,  the coefficients of $p_j, q_{i,j}$ are bounded by $2^{\poly(n)}$.
\end{thm}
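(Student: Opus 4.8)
The plan is to localize the problem to a small ball around $z$ on which every $g_i$ is bounded away from $0$, bound the supremum norms of the $p_j$ and $q_{j,i}$ there using nonnegativity of the individual terms of the decomposition, and convert these bounds into coefficient bounds via \Cref{lemma-use}. For the localization, note that since $z$ is rational with $\size(z)\le\poly(n)$ and each $g_i$ is a constant-degree polynomial with $\poly(n)$-bit rational coefficients, the value $g_i(z)$ is a positive rational of bit size $\poly(n)$, so $g_i(z)\ge 2^{-\poly(n)}$. Consider the tightened set
$$K' := \{x\in\R^n : g_i(x) \ge \tfrac12 g_i(z)\ \text{ for } i=1,\dots,m\}\subseteq K,$$
whose defining polynomials $g_i-\tfrac12 g_i(z)$ still have $\poly(n)$-bit coefficients and which contains $z$ in its interior. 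By \Cref{lemma-ball} there is an $r$ with $2^{-\poly(n)}\le r\le 2^{\poly(n)}$ such that $B:=B_r(z)\subseteq K'$; in particular $g_i(x)\ge\tfrac12 g_i(z)\ge 2^{-\poly(n)}$ for every $x\in B$ and every $i$.

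Next I would bound the supremum of $f$ and of the decomposition pieces on $B$. The existence of a degree-$d$ proof forces $\deg f\le d$, so $f$ is a constant-degree polynomial with $\poly(n)$-bit coefficients; since $B$ is contained in the origin-centered ball of radius $r+\|z\|_2\le 2^{\poly(n)}$, we get $M:=\max_{x\in B} f(x)\le 2^{\poly(n)}$. On $B\subseteq K$ every term of $f=\sum_j p_j^2+\sum_i g_i\big(\sum_j q_{j,i}^2\big)$ is nonnegative, so pointwise on $B$ one has $p_j^2\le f\le M$ and $q_{j,i}^2\le f/g_i\le 2M/g_i(z)\le 2^{\poly(n)}$. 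Hence $\max_{x\in B}|p_j(x)|$ and $\max_{x\in B}|q_{j,i}(x)|$ are all at most $2^{\poly(n)}$.

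To pass from supremum bounds to coefficient bounds, let $h$ denote any of the $p_j$ or $q_{j,i}$ (so $\deg h\le d$) and set $\tilde h(x):=h(x+z)$, which satisfies $\max_{\|x\|_2\le r}|\tilde h(x)|=\max_{x\in B}|h(x)|\le 2^{\poly(n)}$. Shrinking $r$ to $\min(r,n/2)$ if needed (still $\ge 2^{-\poly(n)}$) so that \Cref{lemma-use} applies, every coefficient of $\tilde h$ is at most $3^{\,d+1}d!\,(n/r)^{d/2}\cdot 2^{\poly(n)}\le 2^{\poly(n)}$, using that $d$ is a fixed constant. Finally $h(x)=\tilde h(x-z)$; expanding $(x-z)^\alpha$ writes each coefficient of $h$ as a sum of at most $\binom{n+d}{d}=\poly(n)$ terms, each a product of a coefficient of $\tilde h$, a binomial coefficient at most $2^d=O(1)$, and a monomial in $z$ of degree $\le d$ bounded by $\|z\|_2^d\le 2^{\poly(n)}$. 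Hence each coefficient of $h$ is $2^{\poly(n)}$, and applying this to all $p_j$ and $q_{j,i}$ proves the statement.

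I expect the localization step to be the only real subtlety: the decomposition controls the $q_{j,i}$ only where the $g_i$ are nonnegative and bounded below, so the ball inside $K$ produced directly by \Cref{lemma-ball} does not suffice, and one must first pass to the tightened set $K'$ to obtain a uniform lower bound $g_i\ge 2^{-\poly(n)}$ on the ball. Once this is arranged, the rest is bookkeeping: the quantities $M$, $1/g_i(z)$, $n/r$, and $\|z\|_2$ all remain within $2^{\poly(n)}$ precisely because the degree $d$ is fixed.
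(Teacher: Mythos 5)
Your proposal follows essentially the same route as the paper's proof: pass to the tightened set $K'$ so that each $g_i$ is bounded below by a $2^{-\poly(n)}$ quantity on a ball $B_r(z)$ obtained from \Cref{lemma-ball}, use pointwise nonnegativity of the decomposition terms to bound $\sup_{B}|p_j|$ and $\sup_{B}|q_{j,i}|$ by $2^{\poly(n)}$, then shift to the origin, apply \Cref{lemma-use}, and shift back. The only (minor) additions over the paper's write-up are that you explicitly shrink $r$ to ensure the hypothesis $0<r<n$ of \Cref{lemma-use} holds and spell out the binomial-expansion bookkeeping for the shift back, both of which the paper treats implicitly.
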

\begin{proof}
    Let $\varepsilon= \min\{ g_i(z): i=1,\dots, m\}$. Observe that $\varepsilon$ is positive, rational and $\size(\varepsilon)={\poly(n)}$. This implies that $\varepsilon\geq 2^{-\poly(n)}$. We consider the semialgebraic set 
        $$K'=\{x\in \mathbb{R}^n: g_i(x)\geq \frac{\varepsilon}{2} \text{ for } i=1, \dots, m\}.$$
    By Lemma \ref{lemma-ball}, there exists $2^{-\poly(n)} \leq r \leq 2^{\poly(n)}$ such that $B_r(z)\subseteq K'$. Now we take the maximum of $f$ over the ball $B_r(z)$.
    \begin{align*}
        \max_{x\in B_r(z)} f(x) &= \max_{x\in B_r(z)} \Big(\sum_{j=1}^{s_1}p_j(x)^2+ \sum_{i=1}^m g_i(x)\sum_{j=1}^{s_i}q_{i,j}(x)^2\Big)\\
        &\geq \max_{x\in B_r(z)} \Big(\sum_{j=1}^{s_1}p_j(x)^2+ \frac{\varepsilon}{2}\sum_{i=1}^m \sum_{j=1}^{s_i}q_{i,j}(x)^2\Big).
    \end{align*}
    Since all terms in the right hand side are nonnegative, and the maximum of $f$ over $B_r(z)$ is bounded from above by $2^{\poly(n)}$, we obtain
        $$\max_{x\in B_r(z)} p_j(x)^2 \leq 2^{\poly(n)} \text{ for } i=1, \dots, s_1,$$
    and thus, 
        $$\max_{x\in B_r(z)} |p_j(x)| \leq 2^{\poly(n)} \text{ for } i=1, \dots, s_1.$$
    Similarly, since $\varepsilon> 2^{-\poly(n)}$, we obtain 
        $$\max_{x\in B_r(z)}  q_{i,j}(x)^2 \leq 2^{\poly(n)} \text{ for } i=1, \dots, m, \quad j=1, \dots, s_2,$$
    so that, 
        $$\max_{x\in B_r(z)}  |q_{i,j}(x)| \leq 2^{\poly(n)} \text{ for } i=1, \dots, m, \quad j=1, \dots, s_2.$$
    
    Then, it suffices to prove that the coefficients of $p_1$ are bounded by $2^{\poly(n)}$. 
    We define $\tilde{p_1}(x):= p_1(x-z)$. Then, we have 
        $$\max_{\sum_{i=1}^n x_i^2\leq r} |\tilde{p_1}(x)|\leq 2^{\poly(n)}.$$
    Let $\tilde{p_1}=\sum_{|\alpha|\leq d} \tilde{c_\alpha}x^{\alpha}$. Combining Lemma \ref{lemma-use} and the previous inequality, we obtain 
        $$\max |\tilde{c_\alpha}| \leq 3^{d+1}d! \Big(\frac{n}{r}\Big)^{\frac{d}{2}}\max_{\sum_{i=1}^nx_i^2\leq r}\tilde{p_1}(x) \leq 2^{\poly(n)}.$$ 
    
    This shows that the coefficients of $p_1$ are bounded by $2^{\poly(n)}$ because we have $p_1(x)= \tilde{p_1}(x+z)$, and the entries of $z$ are bounded by $2^{\poly(n)}$. 
\end{proof}

From Theorem \ref{thm:SoS bound coeff-1} we can directly derive:
\begin{cor}\label{coro-SoS}
    Let $M\in \MS^n$ be a symmetric matrix such that $|M_{ij}|\leq 2^{\poly(n)}$, and let $r\in \mathbb{N}$ be a constant.
    \begin{enumerate}
        \item  If $M\in \MK_n^{\smash{(r)}}$, then the coefficients of the sums of squares in the representations as in definition (\ref{eq:def-cones-Kr}) and (\ref{eq: K-ZVP}) are bounded by $2^{\poly(n)}$.
        \item If $M\in \MQ_n^{\smash{(r)}}$, then the coefficients of the sums of squares in the representation as in (\ref{eq: def-conesQ}) are bounded by $2^{\poly(n)}$.  
    \end{enumerate}
\end{cor}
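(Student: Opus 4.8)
The plan is to instantiate \Cref{thm:SoS bound coeff-1} for each of the polynomial identities that certify membership in $\MK_n^{(r)}$ and $\MQ_n^{(r)}$. Two simple facts make this possible. First, since $r$ is a fixed constant, every polynomial appearing in \eqref{eq:def-cones-Kr}, \eqref{eq: K-ZVP} and \eqref{eq: def-conesQ} has constant degree ($2r+4$ and $r+2$ respectively), so the degree parameter of \Cref{thm:SoS bound coeff-1} is constant. Second, since $M$ has rational entries of bit size $\poly(n)$ (so in particular $|M_{ij}|\le 2^{\poly(n)}$) and the polynomials $(\sum_i x_i^2)^r$, $(\sum_i x_i)^r$ expand into $n^{O(r)}=\poly(n)$ monomials with coefficients of constant bit size, the ``left-hand'' polynomials
\[
    f := \Big(\sum_{i=1}^n x_i^2\Big)^r (x^{\circ 2})^T M x^{\circ 2}
    \qquad\text{and}\qquad
    g := \Big(\sum_{i=1}^n x_i\Big)^r x^T M x
\]
have rational coefficients of bit size $\poly(n)$, hence satisfy the input requirements of \Cref{thm:SoS bound coeff-1}.

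For the representation \eqref{eq:def-cones-Kr}: if $M\in\MK_n^{(r)}$ then $f=\sum_j p_j^2$, which I would view as a degree-$(2r+4)$ sum-of-squares proof of nonnegativity of $f$ over $K=\R^n$; to match the statement of \Cref{thm:SoS bound coeff-1} literally, one adds the redundant constraint $g_1\equiv 1\ge 0$ (with zero SOS multiplier) and takes $z=0$, so that $g_1(z)=1>0$. \Cref{thm:SoS bound coeff-1} then bounds the coefficients of the $p_j$ by $2^{\poly(n)}$; moreover one may assume there are only $\binom{n+r+2}{r+2}=\poly(n)$ squares, by extracting the $p_j$ from a Gram-matrix factorization of $f$.

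For the representations \eqref{eq: K-ZVP} and \eqref{eq: def-conesQ}: if $M$ lies in the relevant cone, then $g=\sum_{\beta\in S}x^\beta\sigma_\beta$ with $\sigma_\beta\in\Sigma_{r+2-|\beta|}$, where $S$ is the set of admissible multi-indices (those with $|\beta|\le r+2$, respectively $|\beta|\in\{r,r+2\}$), and $|S|=n^{O(r)}=\poly(n)$. Writing each $\sigma_\beta=\sum_j q_{\beta,j}^2$ with the $q_{\beta,j}$ coming from a Gram-matrix factorization (so the number of squares is $\poly(n)$ and each $q_{\beta,j}$ has degree at most $(r+2)/2$), this becomes a degree-$(r+2)$ proof of nonnegativity of $g$ over
\[
    K=\{x\in\R^n:\ x^\beta\ge 0\ \text{ for all } 0\ne\beta\in S\},
\]
whose constraint polynomials are the monomials $x^\beta$ --- rational, of constant bit size, and $\poly(n)$ in number. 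I would take $z=\mathbf 1=(1,\dots,1)$, which is rational with $\size(z)=O(n)$, lies in $K$, and satisfies $z^\beta=1>0$ for all $\beta\in S$ (this also makes $K$ full-dimensional). \Cref{thm:SoS bound coeff-1} then gives that the coefficients of the $q_{\beta,j}$ are bounded by $2^{\poly(n)}$, and hence the coefficients of each $\sigma_\beta=\sum_j q_{\beta,j}^2$ are bounded by $\poly(n)\cdot(2^{\poly(n)})^2=2^{\poly(n)}$. This yields both items of the corollary.

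Since the corollary is essentially an instantiation of \Cref{thm:SoS bound coeff-1}, I do not expect a genuine obstacle; the only steps that need a little care are (i) the bit-size estimate for the coefficients of $f$ and $g$, which crucially uses that $r$ is constant, and (ii) the reduction to representations with only $\poly(n)$ many squares (via a Gram-matrix / rank argument), which is what allows one to pass from a coefficient bound on the individual squares $q_{\beta,j}$ to a coefficient bound on the sum-of-squares polynomials $\sigma_\beta$.
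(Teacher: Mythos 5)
Your proof is correct and follows essentially the same route as the paper: both instantiate \Cref{thm:SoS bound coeff-1} with $f=(\sum_i x_i^2)^r(x^{\circ 2})^TMx^{\circ 2}$ (resp. $(\sum_i x_i)^r x^TMx$), a semialgebraic set $K$ cut out by the trivial constraint (resp. the admissible monomials $x^\beta\geq 0$), and an explicit strictly feasible point ($z=0$ or $z=\mathbf 1$ versus the paper's $z=(1,\dots,1)$ --- an immaterial difference). Your additional remarks on the $\poly(n)$ bit size of the left-hand polynomials and on extracting $\poly(n)$ many squares from a Gram factorization are refinements the paper leaves implicit, not deviations in method.
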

\begin{proof}
    We apply Theorem \ref{thm:SoS bound coeff-1} in several settings. For part (1) and the representation as in (\ref{eq:def-cones-Kr}) the result follows after setting $f=(\sum_{i=1}^nx_i^2)^r(x^{\circ2})^TMx^{\circ2}$, $K=\{x\in \mathbb{R}: 1\geq 0\}$, and $z=(1, \dots, 1)$. For part 1) and the representation as in (\ref{eq: def-conesQ}) the result follows after setting $f=(\sum_{i=1}^nx_i)^rx^TMx$,
        $$K=\{x\in \mathbb{R}^n: x^\beta \geq 0, \text{ for } \beta\in \mathbb{N}^n, |\beta|\leq r, |\beta|\equiv r \ (\text{mod }2)\},$$
    and $z=(1, \dots, 1)$. Finally, for part (2) we set $f=(\sum_{i=1}^nx_i)^rx^TMx$,
        $$K=\{x\in \mathbb{R}^n: x^\beta \geq 0, \text{ for } \beta\in \mathbb{N}^n, |\beta|=r,r+2\},$$
    and $z=(1,\dots, 1)$.
\end{proof}

\section{Proof of \Cref{THM:main}}\label{section:main-results}

Let $A_1, \dots, A_m, C\in \MS^n$ be symmetric matrices and let $b\in \mathbb{R}^m$ be a vector. Let $\mathcal C$ be a matrix cone. We consider programs of the form:
\begin{align}\label{eq:conic-prog}
    p = \min_{y\in \mathbb{R}^m}\Big\{ b^Ty: \sum_{i=1}^my_iA_i-C \in \mathcal C\Big\}.
\end{align}
Recall the two properties for this type of program that feature in \Cref{THM:main}.
\begin{df}\label{def:properties}
Consider a conic program as in \eqref{eq:conic-prog}. We say that 
    \begin{enumerate}[label=PBOS]
        \item \label{PBOS} Program \eqref{eq:conic-prog} has a \emph{polynomially bounded optimal solution}, (\emph{PBOS}) for short, if there exist an $R \leq 2^{\poly(n)}$ and an optimal solution $y^*\in\mathbb R^m$ for \eqref{eq:conic-prog} such that $y^*\in [-R,R]^m$; 
    \end{enumerate}
    \begin{enumerate}[label=intSPN]
        \item \label{intSPN} The tuple $(A_1,\dots,A_m,C)$ has an \emph{interior $\SPN^{\rm low}_n$ point}, (\emph{intSPN}) for short, if there exists $\bar y\in\mathbb Q^m$ with $\size(\bar y) \leq \poly(n)$ such that 
        \[
            \sum_{i=1}^m\bar y_i A_i-C \in \SPN^{\rm low}_n,
        \]
        where the set $\SPN^{\rm low}_n$ is defined as
        \begin{align*}
            \SPN^{\rm low}_n := \{M\in \mathcal S^n \colon &M=P+N \text{ for } N\geq 0 \text{ and } P\succ 0 \\
            &\text{ such that } \lambda_{\min}(P)\geq 2^{-\poly(n)} \text{ and } \\ 
            &P_{ij}, N_{ij}\text{ have bit size}\leq \poly(n) \ \forall i,j=[n]\}.
        \end{align*}
    \end{enumerate}
\end{df}
\begin{obs}\label{obs:R}
    Notice that if both properties \eqref{PBOS} and \eqref{intSPN} hold for program \eqref{eq:conic-prog}, i.e. there exists a polynomially bounded optimal solution $y^*$ and an interior $\SPN^{\rm low}_n$ point $\bar y$, then we can find a natural number $R \leq 2^{\poly(n)}$ such that 
    \[y^*,\bar y \in [-R,R]^m.\]
\end{obs}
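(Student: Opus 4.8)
The plan is simply to combine the two magnitude bounds coming from the hypotheses. Property \eqref{PBOS} already supplies an optimal solution $y^* \in [-R_1, R_1]^m$ of \eqref{eq:conic-prog} with $R_1 \leq 2^{\poly(n)}$, so for that vector there is nothing left to do; the only point that needs attention is converting the bit-size bound on the point $\bar y$ provided by \eqref{intSPN} into a bound on its absolute value.

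First I would invoke \eqref{intSPN} to obtain $\bar y \in \Q^m$ with $\size(\bar y) \leq \poly(n)$. Using the encoding convention fixed in \Cref{SEC:bitsize}, writing each coordinate as $\bar y_i = q_{i,1}/q_{i,2}$ we have $\size(\bar y_i) \geq 1 + \lceil \log_2(|q_{i,1}|+1) \rceil > \log_2 |q_{i,1}|$, and since $|q_{i,2}| \geq 1$ this gives $|\bar y_i| \leq |q_{i,1}| < 2^{\size(\bar y_i)} \leq 2^{\size(\bar y)} \leq 2^{\poly(n)}$. Hence $\bar y \in [-R_2, R_2]^m$ for some $R_2 \leq 2^{\poly(n)}$.

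Finally I would set $R := \lceil \max\{R_1, R_2\} \rceil \in \N$. The maximum of two quantities bounded by $2^{\poly(n)}$ is again bounded by $2^{\poly(n)}$, and rounding up increases it by less than $1$, so $R \leq 2^{\poly(n)}$; by construction $y^*, \bar y \in [-R, R]^m$, as required. I do not expect any real obstacle: the statement is a bookkeeping consequence of the definitions, the only (entirely routine) ingredient being the elementary fact that a rational number of polynomial bit size has polynomially bounded absolute value.
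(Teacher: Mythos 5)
Your proposal is correct and follows exactly the routine bookkeeping the paper has in mind (the paper states this as an \texttt{obs} without spelling out a proof). Reading the magnitude bound on $y^*$ directly off~\eqref{PBOS}, converting the bit-size bound $\size(\bar y)\leq\poly(n)$ from~\eqref{intSPN} into $|\bar y_i|\leq 2^{\poly(n)}$ via the encoding convention of~\Cref{SEC:bitsize}, and taking $R=\lceil\max\{R_1,R_2\}\rceil$ is precisely the intended argument.
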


Below, we consider the programs~\eqref{eq:conic-prog1} and \eqref{eq:conic-prog2} separately. In both cases, we show that if they satisfy conditions \eqref{PBOS} and~\eqref{intSPN}, their optimal values can be computed in polynomial time (up to fixed precision).

\subsection{Polynomial-time computation of \texorpdfstring{\eqref{eq:conic-prog1}}{CP-K}}
Recall that~\eqref{eq:conic-prog1} is obtained by replacing the copositive cone $\COP_n$ in~\eqref{cop-program} by the cones~$\MK_n^{\smash{(r)}}$ defined in~\eqref{eq:def-cones-Kr}, i.e.,
\begin{equation} \tag{\ref{eq:conic-prog1}}\label{cop-program-approx1}
    p^{\smash{(r)}}_{\MK} := \min\Big\{ b^Ty: \sum_{i=1}^m y_i A_i-C \in \MK_n^{\smash{(r)}}\Big\}.
\end{equation}
In order to prove~\Cref{THM:main} for this relaxation, we first formulate it as a semidefinite progam. Suppose that~\eqref{cop-program-approx1} satisfies~\eqref{PBOS} for some $R \leq 2^{\poly(n)}$ and that it satisfies~\eqref{intSPN} with feasible point $\bar y \in [-R, R]^m$. Then, \eqref{cop-program-approx1} is equivalent to the following semidefinite program:
\begin{equation}\label{eq:ref stqp1}
    \min \Big \{\Big\langle \tilde Q, 
    \left(\begin{array}{c|c} 
    0 & 0\\
    \hline 
    0 & B 
    \end{array}\right)\Big\rangle \colon
    \tilde Q = \left(\begin{array}{c|c} 
    Q & 0\\
    \hline 
    0 & D  
    \end{array}\right), 
    \langle \tilde Q, A_\gamma\rangle = b_\gamma, \ \tilde Q\succeq 0\Big \}.
\end{equation}
Here, $B\in \mathcal S^{2m}$ is the diagonal matrix $\mathrm{Diag}(b_1/2,-b_1/2,\dots,b_m/2,-b_m/2)$, and the matrix $D\in \mathcal S^{2m}$ models a $2m\times 2m$ diagonal matrix encoding the variables $y_i$ in the following way: we impose linear constraints so that the matrix $D$ takes the shape $\mathrm{Diag}(d_1^+,d_1^-,\dots,d_m^+,d_m^-)$.
For each $i\in [m]$, the variable $d_i^+$ models $2R+y_i$, and we impose the linear constraints
    $$d_i^- = 4R - d_{i}^+.$$
Hence, for each $i\in [m]$, the variable $d_i^-$ models $2R-y_i$.

The shape of $\tilde Q$ is imposed via a polynomial number of linear constraints. For $p_y := (x^{\circ 2})^T(\sum_{i=1}^m y_i A_i - C)x^{\circ 2}$, we write  
    $$(\sum_{i=1}^n x_i^2)^rp_y=\sum_{|\gamma|=2r+4}{c_\gamma}(y)x^{\gamma},$$
where $c_\gamma(y)$ is a linear function on $y$ whose coefficients are bounded by~$2^{\poly(n)}$. Then, we can write $c_\gamma(y)=\langle D, C_\gamma \rangle$ for a matrix $C_\gamma$ with rational entries bounded by $2^{\poly(n)}$. The matrix $Q$ is such that  we have the equality  
    $$(\sum_{i=1}^n x_i^2)^r p_y = [x]_{r+2}^T Q [x]_{r+2}, \text{ and } Q\succeq 0,$$ 
thus by equating the coefficients we have $\langle Q, \tilde{A}_{\gamma}\rangle = c_\gamma(y)$ for some matrices $\tilde{A}_\gamma$ with entries bounded by $2^{\poly(n)}$. Then, each of these constraints can be written as $\langle Q, \tilde{A_\gamma}\rangle = \langle D, C_\gamma \rangle$, which corresponds to the constraint of the form $\langle \tilde Q, A_\gamma\rangle = b_\gamma$.

\begin{thm}[Special case of \Cref{THM:main}] \label{thm:cones-K}
    Let $r\in \mathbb{N}$. Assume that \eqref{cop-program-approx1} satisfies conditions \eqref{PBOS}, and \eqref{intSPN}. Then, its optimal solution value $p^{\smash{(r)}}_{\MK}$ can be computed up to additive error $\varepsilon >0$ in time polynomial in $n$ and $\log(1/\varepsilon)$.
\end{thm}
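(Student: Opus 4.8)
The plan is to apply Theorem~\ref{thm:approx sol} to the semidefinite program~\eqref{eq:ref stqp1} that was just constructed as an equivalent reformulation of~\eqref{cop-program-approx1}. To do so, I need to exhibit a feasible point $X_0$ of small bit size together with radii $R_1, R_2 \in \mathbb{Q}$ satisfying $2^{-\poly(n)} \leq R_1 \leq R_2 \leq 2^{\poly(n)}$ such that $B(X_0, R_1) \subseteq \mathcal{R} \subseteq B(X_0, R_2)$, where $\mathcal{R}$ is the feasible region of~\eqref{eq:ref stqp1} inside the affine subspace $V$ defined by its linear constraints. First I would record that, by~\Cref{obs:R}, conditions~\eqref{PBOS} and~\eqref{intSPN} together give a single bound $R \leq 2^{\poly(n)}$ with $y^*, \bar y \in [-R,R]^m$, so that the reformulation~\eqref{eq:ref stqp1} is legitimately set up with this $R$ and has bit size $\poly(n)$ (the matrices $A_\gamma, \tilde A_\gamma, C_\gamma$, $B$, and the constraint right-hand sides all have entries bounded by $2^{\poly(n)}$, as noted in the construction; crucially, the claim that $c_\gamma(y)$ has coefficients bounded by $2^{\poly(n)}$ rests on~\Cref{coro-SoS}).

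Next, I would construct the feasible point $X_0$. Starting from the \eqref{intSPN} point $\bar y$ with $\sum_i \bar y_i A_i - C = P + N$ where $P \succ 0$, $\lambda_{\min}(P) \geq 2^{-\poly(n)}$, and $N \geq 0$, I get an explicit sum-of-squares representation of $(\sum_i x_i^2)^r (x^{\circ 2})^T(P+N)x^{\circ 2}$: the $N$-part contributes a sum of squares of monomials (after multiplying by the Polya multiplier, or directly since $(x^{\circ 2})^T N x^{\circ 2}$ has nonnegative coefficients so is already SoS, and multiplying by $(\sum x_i^2)^r$ preserves this), while the $P$-part is handled by writing $(x^{\circ 2})^T P x^{\circ 2} = \|P^{1/2} x^{\circ 2}\|^2$ — but to keep bit sizes controlled I would instead use that $P \succeq \lambda_{\min}(P) I$ and exploit Polya/Reznick to get an SoS certificate with $\poly(n)$-bit coefficients (here the fact that $\lambda_{\min}(P) \geq 2^{-\poly(n)}$ is what makes $r$ large enough — but $r$ is fixed, so more carefully: $P + N$ may not be in $\MK_n^{(r)}$ for the given fixed $r$; this is the subtle point). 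Actually the cleaner route: condition~\eqref{intSPN} says $\sum \bar y_i A_i - C \in \SPN_n = \MK_n^{(0)} \subseteq \MK_n^{(r)}$, and $P + N$ with $\lambda_{\min}(P) \geq 2^{-\poly(n)}$ yields, via a direct SoS decomposition of $(x^{\circ2})^T P x^{\circ2} = [x^{\circ 2}]^T P [x^{\circ 2}]$ together with Cholesky of $P$ (whose entries have $\poly(n)$ bit size since $\lambda_{\min}(P)$ is bounded below and entries of $P$ are bounded above — a standard bit-size bound on Cholesky factors), a PSD matrix $Q_0$ with $\poly(n)$-bit entries and $\lambda_{\min}(Q_0) \geq 2^{-\poly(n)}$ on the relevant block; combined with the diagonal blocks $D_0 = \mathrm{Diag}(2R + \bar y_i, 2R - \bar y_i)$ (strictly positive, entries $\poly(n)$-bit), this assembles into a feasible $X_0 = Q_0 \oplus D_0$ for~\eqref{eq:ref stqp1} which lies in the relative interior of $\mathcal{R}$ with a quantitative margin. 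Applying~\Cref{lemma-ball} (or a direct eigenvalue argument) then yields $R_1 \geq 2^{-\poly(n)}$ with $B(X_0, R_1) \subseteq \mathcal{R}$.

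For the outer radius $R_2$, I would argue that any feasible $\tilde Q \in \mathcal{R}$ is block-diagonal with a PSD block $Q$ and a diagonal block $D = \mathrm{Diag}(d_i^+, d_i^-)$; the linear constraints $d_i^- = 4R - d_i^+$ together with $d_i^\pm \geq 0$ force $0 \leq d_i^\pm \leq 4R$, so $\|D\|_F \leq \poly(n) \cdot R \leq 2^{\poly(n)}$. For the block $Q$: since $[x]_{r+2}^T Q [x]_{r+2} = (\sum_i x_i^2)^r p_y$ with $y = y(D)$ bounded entrywise by $2^{\poly(n)}$, and $Q \succeq 0$, the diagonal entries of $Q$ are the coefficients of the diagonal monomials in this polynomial, hence bounded by $2^{\poly(n)}$ (using again the coefficient bound underlying~\Cref{coro-SoS}/the construction); then $Q \succeq 0$ gives $|Q_{\alpha\beta}| \leq \sqrt{Q_{\alpha\alpha} Q_{\beta\beta}} \leq 2^{\poly(n)}$, so $\|Q\|_F \leq 2^{\poly(n)}$. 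Therefore $\mathcal{R} \subseteq B(X_0, R_2)$ for some $R_2 \leq 2^{\poly(n)}$, with $\log(R_2/R_1) \leq \poly(n)$. Feeding $X_0, R_1, R_2$ into~\Cref{thm:approx sol} gives a rational near-optimal solution, and hence $p_\MK^{(r)}$ up to additive error $\ep$, in time $\poly(n, \log(1/\ep))$. The main obstacle I anticipate is making precise the bit-size control on $X_0$ — specifically producing an explicit, small-bit-size SoS (equivalently PSD Gram matrix) certificate witnessing that the \eqref{intSPN} point lies in $\MK_n^{(r)}$ with a quantitative interior margin $2^{-\poly(n)}$; the nonnegative part $N$ is easy, but the positive-definite part $P$ requires either a careful Cholesky bit-size bound or a quantitative version of Polya's theorem, and ensuring the resulting Gram matrix retains a spectral lower bound of $2^{-\poly(n)}$ after embedding into the larger monomial basis is the delicate step.
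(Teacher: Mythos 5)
Your overall plan is correct and matches the paper's: formulate~\eqref{cop-program-approx1} as the SDP~\eqref{eq:ref stqp1}, find a feasible $X_0$ with inner and outer radii, and invoke~\Cref{thm:approx sol} together with~\Cref{coro-SoS}. You also correctly identify the crux: producing $X_0$ with a \emph{quantitative} interior margin of size $2^{-\poly(n)}$. However, you do not resolve this crux, and the route you sketch (Cholesky factorization of $P$, then controlling the spectral lower bound of the resulting Gram matrix after embedding into the monomial basis) is exactly the delicate step the paper sidesteps. The paper's trick is to \emph{shift the margin from the PSD part to the nonnegative part}: choose rational $b$ with $2^{-\poly(n)} \le b < \lambda_{\min}(P)$ so that $P_b := P - bJ \succeq 0$, and rewrite $P + N = P_b + (bJ + N)$. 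Now $(x^{\circ2})^T(bJ+N)x^{\circ2}$ has all coefficients $\ge b$, so after multiplying by $(\sum_i x_i^2)^r$ one gets a sum of \emph{squared monomials} $\sum_{|\alpha|=r+2} b_\alpha x^{2\alpha}$ with $b_\alpha\ge b$, giving a \emph{diagonal} Gram block $Q'$ with diagonal $\ge b$. The $P_b$ part gives a PSD block $P'$ with no margin required. Setting $X_0=(P'+Q')\oplus D$ and $R_1=\min\{b/k_{r+2}^n,R\}$, any perturbation in $B(X_0,R_1)$ keeps $Q'+\tilde E$ diagonally dominant, hence PSD, hence $P'+Q'+\tilde E\succeq0$ — no Cholesky bit-size bound and no spectral lower bound on $P'$ are needed. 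This is the idea missing from your argument.

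For the outer radius $R_2$ there is a second, smaller gap: you assert that ``the diagonal entries of $Q$ are the coefficients of the diagonal monomials in this polynomial.'' This is false in general — the coefficient of $x^{2\gamma}$ in $[x]_{r+2}^TQ[x]_{r+2}$ is $\sum_{\beta+\delta=2\gamma}Q_{\beta\delta}$, which contains $Q_{\gamma\gamma}$ but also off-diagonal contributions, so the diagonal of $Q$ is not determined by the polynomial and cannot be bounded this way. The correct argument (and the one the paper uses) goes via the spectral decomposition $Q=\sum_i v_iv_i^T$, observes that $[x]_{r+2}^TQ[x]_{r+2}=\sum_i(v_i^T[x]_{r+2})^2$ is an SoS certificate for a matrix $\sum_i y_iA_i-C$ with entries bounded by $2^{\poly(n)}$, and then applies \Cref{coro-SoS} to bound the entries of each $v_i$, hence of $Q$.
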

\begin{proof}
    As shown above, we can formulate~\eqref{cop-program-approx1} as the semidefinite program~\eqref{eq:ref stqp1}. We observe that program~\eqref{eq:ref stqp1} has bit size polynomial in $n$ (recall that~\eqref{cop-program} has bit size polynomial in $n$ by assumption). To show~\eqref{eq:ref stqp1} can be solved (up to precision $\varepsilon$) in polynomial time in $n$ and $\log(1/\varepsilon)$, we use \Cref{thm:approx sol} and \Cref{coro-SoS}.
    
    Let $\mathcal{F}$ be the feasible set of program \eqref{eq:ref stqp1}, i.e.,
    \begin{equation*}
        \mathcal F = \Big \{
        \tilde Q = \left(\begin{array}{c|c} 
        Q & 0\\
        \hline 
        0 & D  
        \end{array}\right) \colon
        \langle A_\gamma,\tilde Q\rangle = b_\gamma,\ \tilde Q\succeq 0\Big \}.
    \end{equation*}
    Then $\mathcal F$ is contained in the set $\mathcal S^l$, for $l=k_{r+2}^n+2m$, where $k_{r+2}^n=\binom{n+r+1}{r+2}$ is the number of monomials of degree $r+2$ in $n$ variables.
    In order to apply Theorem \ref{thm:approx sol}, we need to find a matrix $X_0\in \mathcal{F}$ with rational entries that can be encoded in $\poly(n)$ bits, and rational numbers $R_1,R_2$ with bit size $\poly(n)$ such that
        $$B(X_0,R_1)\subseteq \mathcal F\subseteq B(X_0,R_2),$$
    where the balls are taken in the affine space 
        $$V = V(\mathcal F) := \{\Tilde Q\in \mathcal S^l : \langle A_\gamma,\Tilde Q\rangle = b_\gamma\}.$$
    By condition \eqref{intSPN}, there exists $\bar y\in\mathbb R^m$ such that
    \begin{equation*}
        \sum_{i=1}^m\bar y_i A_i - C = P+N,
    \end{equation*}
    for $P\succ 0$ with $\lambda_{\min}(P)= a > 2^{-\poly(n)}$ and $N$ with nonnegative entries. Note that the hypotheses on $P$ imply also that there exists $b\in \mathbb{Q}$ of bit size $\poly(n)$, with $a>b\geq 2^{-\poly(n)}$, such that $P_b:=P-bJ\succeq 0$, where $J$ is the all ones matrix.
    
    Consider the polynomial $p_{\bar y}=(x^{\circ 2})^T(\sum_{i=1}^m \bar y_iA_i -C)x^{\circ 2}$. We can write
    \begin{align*}
        p_{\bar y} &=(x^{\circ 2})^T(\sum_{i=1}^m \bar y_iA_i -C)x^{\circ 2} \\
        &=(x^{\circ 2})^T(P+N)x^{\circ 2}\\
        &= (x^{\circ 2})^T P_b x^{\circ 2} + (x^{\circ 2})^T (bJ +N) x^{\circ 2},
    \end{align*}
    where $P_b\succeq 0$ and $bJ+N$ has all entries greater or equal to $b$. The polynomial~$p_{\bar y}$ can thus be written as
    \begin{align*}
        p_{\bar y} = (x^{\circ 2})^T P_b x^{\circ 2} + \sum_{|\alpha|=2} \tilde b_{\alpha} x_{\alpha}^{2\alpha}.
    \end{align*}
    It follows that the degree-$(2r+4)$ polynomial $(\sum_i x_i^2)^r p_{\bar y}$ is an SoS of the form
    \begin{equation}\label{eq: SoS1}
        (\sum_{i=1}^n x_i^2)^r p_{\bar y} = (\sum_{i=1}^n x_i^2)^r (x^{\circ 2})^T P_b x^{\circ 2} + \sum_{\vert \alpha\vert = r+2} b_{\alpha}x_{\alpha}^{2\alpha},
    \end{equation}
    where $b_\alpha \geq b$ for all $\alpha$. Hence, we have 
        $$\sum_{|\alpha|=r+2} b_{\alpha} x_{\alpha}^{2\alpha} = [x]_{r+2}^TQ'[x]_{r+2},$$
    where $Q'$ is the diagonal matrix with entries $Q'_{\alpha,\alpha}=b_\alpha$. 

    We observe that 
        $$(\sum_{i=1}^n x_i^2)^r (x^{\circ 2})^T P_b x^{\circ 2}= [x]_{r+2}^T P'[x]_{r+2},$$
    where $P' \succeq 0$ is a positive semidefinite matrix with rational entries of size $\poly(n)$. Indeed, the entries of $P'$ are linear combinations of entries of $P_b$: we observe that, for any $\alpha\in \mathbb{N}^n$ with $|\alpha|=r$, we have $x^{2\alpha}(x^{\circ 2})^T P_b x^{\circ 2} = [x]_{r+2}^TP_\alpha'[x]_{r+2}$, where $P_\alpha'$ is obtained from $P_b$ by adding some zero rows and columns. Then, we see that $P' = \sum_{ |\alpha| = {r}} {n \choose 2\alpha} P_{2\alpha}'$.

    Now, we have
    \begin{align}\label{eq: py-SoS1}
        (\sum_i x_i^2)^r p_{\bar y} = [x]_{r+2}^T (P'+Q')[x]_{r+2}.
    \end{align}
    We set
    $$X_0 = \left(\begin{array}{c|c} 
        P'+Q' & 0\\
        \hline 
        0 & D
        \end{array}\right),$$
    where $D=\mathrm{Diag}(2R-\bar{y}_1, 2R+\bar{y}_1, \dots, 2R-\bar{y}_n, 2R+\bar{y}_n)$. Observe that $X_0\in \mathcal{F}$ as $P'+Q'$ is positive semidefinite and \Cref{eq: py-SoS1} holds.
    Let ${R_1 = \min\{b/ k_{r+2}^n,R\}}$, and consider the ball
    \begin{equation*}
        B(X_0,R_1)= \Big \{
         C'\in\mathcal S^l \colon C'= \left(\begin{array}{c|c} 
        Q & 0\\
        \hline 
        0 & D  
        \end{array}\right), 
        \langle A_\gamma,C'\rangle = b_\gamma, Q\succeq 0,\ \Vert X_0 - C' \Vert_F < R_1 \Big \}.
    \end{equation*}
    We show that $B(X_0,R_1)\subseteq \mathcal{F}$. Let $X$ be a matrix in $B(X_0,R_1)$, $X$ can then be seen as a perturbation of $X_0$ of the form
    $$X = \left(\begin{array}{c|c} 
        P'+Q' + \Tilde E & 0\\
        \hline 
        0 & D + \varepsilon
        \end{array}\right).$$
    where the entries of $\Tilde E$ and $\varepsilon=\mathrm{Diag}(\varepsilon_1,\dots,\varepsilon_{2m})$ are bounded in absolute value by~$R_1$. 
    Note that $Q'+\Tilde E$ is diagonally dominant, and hence it is positive semidefinite, meaning $P'+Q'+\Tilde E$ is positive semidefinite. Furthermore, we have $(D+\varepsilon)_{ii}>0$. We conclude that $X$ is positive semidefinite and hence $X \in \mathcal F$.

    Next, we have to show that there exists an $R_2\leq 2^{\poly(n)}$ such that $\mathcal F \subseteq B(X_0,R_2)$. For this, we show that for every matrix $\tilde Q \in \mathcal F$, the absolute values of the entries of $\tilde Q$ are bounded by $2^{\poly(n)}$. Let $\tilde{Q}\in \mathcal{F}$, i.e.,
    \begin{equation*}
        \tilde Q = \left(\begin{array}{c|c} 
        Q & 0\\
        \hline 
        0 & D  
        \end{array}\right) \text{ with }
        \langle A_\gamma,\tilde Q\rangle = b_\gamma,\ \tilde Q\succeq 0.
    \end{equation*}
    Since $Q$ is positive semidefinite, it has a spectral decomposition $Q=\sum_i \lambda_i u_i u_i^T$ with $\lambda_i \geq 0$ for all $i$. Writing $v_i=\sqrt{\lambda_i}u_i$, for all $i\in [n]$, we have
    $Q= \sum_i v_i v_i^T$.
    Then, it holds that 
        $$[x]_{r+2}^T Q [x]_{r+2}=\sum_i (v_i^T [x]_{r+2})^2= (\sum_{i=1}^nx_i^2)^r(x^{\circ 2})^T(\sum_{i=1}^m y_i A_i - C)x^{\circ 2}$$
    for some $y\in [-2R, 2R]^m$. Thus, $(\sum_{i=1}^m y_i A_i - C)\in \MK_n^{\smash{(r)}}$.  By Corollary \ref{coro-SoS}, the entries of $v_i$ are thus bounded by $2^{\poly(n)}$, and so the entries of $Q$ are bounded by $2^{\poly(n)}$. We conclude by noting that the entries of $D$ are all bounded by $2R$. 
\end{proof}

\subsection{Polynomial-time computation of \texorpdfstring{\eqref{eq:conic-prog2}}{CP-Q}}

Recall that a matrix ${M\in \MS^n}$ belongs to the cone $\MQ_n^{\smash{(r)}}$ if 
    $$(\sum_{i=1}^n x_i)^r x^T M x = \sum_{\substack{\beta\in \mathbb{N}^n,\\ |\beta|=r}} x^\beta\sigma_\beta + \sum_{\substack{\beta\in \mathbb{N}^n,\\ |\beta|=r+2}}c_\beta x^\beta,$$
where $\sigma_\beta$ is a homogeneous sum of squares of degree~$2$ for all $\beta \in \mathbb{N}^n$ with ${|\beta|=r}$, and $c_\beta \geq 0$ is a nonnegative scalar for each $\beta\in \mathbb N^n$ with $|\beta|=r+2$.
The relaxation~\eqref{eq:conic-prog2} is obtained by replacing the copositive cone~$\COP_n$ in~\eqref{cop-program}  by the cones $\MQ_n^{\smash{(r)}}$, i.e.,
\begin{align} \tag{\ref{eq:conic-prog2}} \label{cop-program-approx3}
    p^{\smash{(r)}}_{\MQ} := \min_{y\in \mathbb{R}^m}\Big\{ b^Ty: \sum_{i=1}^m y_i A_i-C \in \MQ_n^{\smash{(r)}}\Big\}.
\end{align}
In order to prove \Cref{THM:main} for this relaxation we again first formulate it as a semidefinite program. For $y\in \mathbb{R}^m$, consider the polynomial 
\[
    q_y(x) := x^T\Big(\sum_{i=1}^m y_iA_i - C\Big)x.
\]
Under conditions \eqref{PBOS} and \eqref{intSPN}, program (\ref{cop-program-approx3}) is equivalent to a semidefinite program of the form:
\begin{equation}\label{sdp-Q}
    \min \Big \{\Big\langle \tilde Q, 
    \left(\begin{array}{c|c|c} 
    0 & 0 & 0\\
    \hline 
    0 & 0 & 0 \\ 
    \hline
    0 & 0 & B
    \end{array}\right)\Big\rangle \colon
    \tilde Q = \left(\begin{array}{c|c|c} 
    Q & 0 & 0\\
    \hline 
    0 & C & 0 \\ 
    \hline
    0 & 0 & D
    \end{array}\right), 
    \langle \tilde Q, A_\gamma\rangle = b_\gamma, \ \tilde Q\succeq 0\Big \}.
\end{equation}
Here, the matrix $Q$ models a block-matrix of the form
\begin{align}\label{Q}
    {\footnotesize{
    Q= \setlength{\fboxsep}{0pt}
    \left(\begin{array}{@{}c@{}c@{}c@{\mkern-5mu}c@{\,}cc*{2}{@{\;}c}@{}}%
    \noalign{\vskip 1.5ex}
      \fbox{\ \ \,$Q_{(1,\dots,1)}$\ \ \,}
     \\[-0.1pt]
      &
       & \;\ddots \\
       & & \hskip-0.4pt & \hskip-0.4pt \fbox{\ \, $Q_{\beta}$ \ \,} \\
       & & & \hspace{0.8cm }\;\ddots \\[-0.5ex]
       & & & &   \fbox{\ \ \,$Q_{(n,\dots,n)}$\ \ \,}
    \end{array}\right),}}
\end{align}
where we have one block for each monomial $x^\beta$ of degree $r$ ($\beta\in \mathbb{N}^n, |\beta|=r)$. Each block is an $n\times n$ matrix that models the sum of squares $\sigma_\beta$. That is, the equality
    $$\sigma_\beta=[x]_1^TQ_\beta[x]_1, \quad Q_\beta \succeq 0.$$
The matrix $C\in \MS^{\binom{n+r+1}{r+2}}$ models a diagonal matrix with entries $C_{\beta \beta} = c_{\beta}$ for ${\beta\in \mathbb{N}^{n}}$ with $|\beta|=r+2$.
Finally, the matrix $D$ models a $2n\times 2n$ diagonal matrix encoding the variables $y_i$ in the following way. We impose linear constraints so that $D= \mathrm{Diag}(d_1^+,d_1^-,\dots,d_m^+,d_m^-)$, where, for each $i\in [m]$, the variable $d_i^+$ models $2R+y_i$, and we impose the linear constraint
    $$ d_i^- = 4R - d_{i}^+.$$
(Here, $R \leq 2^{\poly(n)}$ is as in \Cref{obs:R}.)
Hence, for each $i\in [m]$, the variable~$d_i^-$ models $2R-y_i$. Finally, the matrix $B\in \mathcal S^{2m}$ is the diagonal matrix \linebreak $\mathrm{Diag}(b_1/2,-b_1/2,\dots,b_m/2,-b_m/2)$.

The linear constraints of the SDP~\eqref{sdp-Q} are thus given by the following identity:
    $$(\sum_{i=1}^nx_i)^rx^Tq_yx = \sum_{\substack{\beta\in \mathbb{N}^n,\\ |\beta|=r}} x^\beta[x]_1^T Q_\beta [x]_1 + \sum_{\substack{\beta\in \mathbb{N}^n,\\ |\beta|=r+2}}c_\beta x^\beta.$$

\begin{thm}[Special case of \Cref{THM:main}]\label{thm:bounds-Q}
    Let $r\in \mathbb{N}$. Assume that \eqref{cop-program-approx3} satisfies conditions \eqref{PBOS} and \eqref{intSPN}. Then, its optimal solution $p^{\smash{(r)}}_{\MQ}$ can be computed up to additive error $\varepsilon > 0$ in time polynomial in $n$ and $\log(1/{\varepsilon})$.
\end{thm}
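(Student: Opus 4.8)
The plan is to follow the blueprint of the proof of~\Cref{thm:cones-K}, now for the three–block semidefinite formulation~\eqref{sdp-Q}. First I would record that, under~\eqref{PBOS} and~\eqref{intSPN}, the program~\eqref{cop-program-approx3} is equivalent to~\eqref{sdp-Q}, that this SDP has bit size polynomial in $n$ (the matrices $A_\gamma$, $B$ and the linear forms $c_\gamma(y)$ that encode the coefficients of $(\sum_i x_i)^r q_y$ all have entries bounded by $2^{\poly(n)}$, cf.\ the discussion preceding the theorem), and that it therefore suffices, by~\Cref{thm:approx sol}, to exhibit a feasible point $X_0$ of bit size $\poly(n)$ and radii $2^{-\poly(n)} \le R_1 \le R_2 \le 2^{\poly(n)}$ with $B(X_0,R_1) \subseteq \mathcal F \subseteq B(X_0,R_2)$, where $\mathcal F$ denotes the feasible region of~\eqref{sdp-Q} inside the affine space $V$ cut out by its linear constraints.

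For the inner ball I would use~\eqref{intSPN} exactly as in~\Cref{thm:cones-K}: write $\sum_i \bar y_i A_i - C = P + N$ with $P \succ 0$, $a := \lambda_{\min}(P) \ge 2^{-\poly(n)}$, $N \ge 0$, and pick a rational $b$ of bit size $\poly(n)$ with $2^{-\poly(n)} \le b \le a/(2n)$, so that $P_b := P - bJ \succeq (a/2)I \succ 0$ while $N' := bJ + N$ has every entry at least $b$. Writing $x^T N' x = \sum_{|\eta|=2} w_\eta x^\eta$ with each $w_\eta \ge b$, multiplication of $q_{\bar y} = x^T P_b x + x^T N' x$ by $(\sum_i x_i)^r = \sum_{|\beta|=r}\binom r\beta x^\beta$ produces the membership certificate
\[
  \Big(\sum_{i=1}^n x_i\Big)^r q_{\bar y} = \sum_{|\beta|=r} x^\beta\Big(\tbinom r\beta\, x^T P_b x\Big) + \sum_{|\beta|=r+2} c_\beta\, x^\beta, \qquad c_\beta = \sum_{\delta+\eta=\beta}\tbinom r\delta\, w_\eta \ge b .
\]
This yields a feasible $X_0$ whose $Q$-block is $\bigoplus_{|\beta|=r}\binom r\beta P_b$, whose $C$-block is $\mathrm{Diag}(c_\beta)_{|\beta|=r+2}$, and whose $D$-block is $\mathrm{Diag}(2R-\bar y_1,\, 2R+\bar y_1,\, \dots)$, all of bit size $\poly(n)$. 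Choosing $R_1 = 2^{-\poly(n)}$ smaller than $\lambda_{\min}(P_b)$, than $b$, and than $R$, any $X \in V$ with $\|X-X_0\|_F < R_1$ keeps each block positive (semi)definite — every $Q$-block is a perturbation of $\binom r\beta P_b \succeq (a/2)I$ of operator norm $<R_1$, and the diagonal $C$- and $D$-blocks stay entrywise $\ge b - R_1 > 0$ and $\ge R - R_1 > 0$ — so $X \in \mathcal F$. This step is where the $J$-shift is essential: it is precisely what keeps the coefficients $c_\beta$ on the degree-$(r{+}2)$ monomials bounded away from $0$.

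For the outer ball I would bound the entries of an arbitrary $\tilde Q \in \mathcal F$ by $2^{\poly(n)}$. As in~\Cref{thm:cones-K}, the constraint $d_i^- = 4R - d_i^+$ together with $\tilde Q \succeq 0$ forces the $D$-block into $[0,4R]$, so the encoded $y$ lies in $[-2R,2R]^m$ and $M_y := \sum_i y_i A_i - C \in \MQ_n^{\smash{(r)}}$ has entries bounded by $2^{\poly(n)}$. Decomposing each block $Q_\beta$ of $\tilde Q$ spectrally as $\sum_j v_{j,\beta}v_{j,\beta}^{T}$ exhibits $\sigma_\beta = [x]_1^{T} Q_\beta [x]_1$ as a sum of squares of the linear forms $v_{j,\beta}^{T}[x]_1$; since the coefficient bound of~\Cref{coro-SoS}(2) is uniform over all degree-$(r{+}2)$ SOS proofs of membership in $\MQ_n^{\smash{(r)}}$, the entries of the $v_{j,\beta}$ — and likewise the scalars $c_\beta$ — are bounded by $2^{\poly(n)}$, hence so are the entries of $Q_\beta$ (a sum of at most $\poly(n)$ rank-one terms with $2^{\poly(n)}$-bounded entries) and of the $C$-block. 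Therefore $\mathcal F \subseteq B(X_0,R_2)$ for some $R_2 = 2^{\poly(n)}$, so $\log(R_2/R_1) = \poly(n)$, and~\Cref{thm:approx sol} shows $p^{\smash{(r)}}_{\MQ}$ can be computed up to additive error $\varepsilon$ in time $\poly(n,\log(1/\varepsilon))$.

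The point demanding the most care is the inner-ball construction: one must produce a single feasible $X_0$ all three of whose (block-)diagonal ingredients are simultaneously robustly feasible, and in particular the scalars $c_\beta$ attached to the degree-$(r{+}2)$ monomials must be bounded away from zero — which is exactly why a multiple of the all-ones matrix $J$ (rather than of the identity) is shifted out of $P$. Everything else is bookkeeping parallel to~\Cref{thm:cones-K}, the substantive coefficient-size input being already supplied by~\Cref{coro-SoS} and~\Cref{thm:SoS bound coeff-1}.
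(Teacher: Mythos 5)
Your proposal is correct and follows essentially the same route as the paper: reduce to the SDP \eqref{sdp-Q}, build the interior point $X_0$ from the \eqref{intSPN} decomposition $P+N$ with a $bJ$-shift so the degree-$(r{+}2)$ coefficients stay bounded below, and get the outer ball from \Cref{coro-SoS} applied to the certificate extracted from an arbitrary feasible $\tilde Q$, then invoke \Cref{thm:approx sol}. The only (harmless) deviation is cosmetic: you choose $b\le a/(2n)$ so that $P_b=P-bJ\succeq (a/2)I$ is already strictly positive definite, whereas the paper keeps $P_b$ merely PSD and instead transfers a $\tfrac{b}{2n}I$ term from the constants $c_\beta$ into each $Q$-block to make the blocks robustly feasible.
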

\begin{proof}
    As shown above, we can formulate \eqref{cop-program-approx3} as the semidefinite program \eqref{sdp-Q}. We observe that \eqref{sdp-Q} has polynomial bit size in $n$ (recall that \eqref{cop-program} has polynomial bit size in $n$ by assumption).
    To show that~\eqref{sdp-Q} can be solved (up to precision $\varepsilon$) in time polynomial in $n$ and $\log(1/\varepsilon)$, we apply Theorem~\ref{thm:approx sol} and \Cref{coro-SoS}. 
    
    Let $\mathcal{F}$ be the feasible set of program \eqref{sdp-Q}, i.e.,
    $$ \mathcal{F}=\left\{  \tilde Q = \left(\begin{array}{c|c|c} 
        Q & 0 & 0\\
        \hline 
        0 & C & 0 \\ 
        \hline
        0 & 0 & D
        \end{array}\right), 
        \langle \tilde Q, A_\gamma\rangle = b_\gamma, \ \tilde Q\succeq 0 \right\}. $$
    To apply Theorem~\ref{thm:approx sol}, we need to find a rational matrix $X_0\in \mathcal{F}$ and rational scalars $R_1,R_2$ that can be encoded in $\poly(n)$ bits such that 
        $$B(X_0, R_1)\subseteq \mathcal{F} \subseteq B(X_0,R_2),$$
    where the ball is taken in the affine subspace 
        $$V(\mathcal F):= \{\Tilde Q\in \mathcal{S}^{l}: \langle \tilde Q , A_\gamma\rangle = b_\gamma\}.$$
    where $l=n\binom{n+r-1}{r}+ \binom{n+r+1}{r+2}+2m$.
    Let $\bar y\in \mathbb{Q}^m$, $P$ and $N$ be as in condition~\eqref{intSPN}. That is, we have  
        $$\sum_{i=1}^m\bar y_i A_i-C=P+N,$$
    with $\lambda_{\min}(P)\geq 2^{-\poly(n)}$. 
    This implies that there exists a $b\geq 2^{-\poly(n)}$ with bit size at most $\poly(n)$, such that $P_b:=P-bJ\succeq 0$. We write 
        $$q_{\bar y}(x) = x^TP_bx + x^T(bJ+N)x.$$
    Observe that the second term on the RHS above is a polynomial whose coefficients are all at least $b$. Therefore, we have 
    \begin{align}\label{eq-P_b}
        (\sum_{i=1}^nx_i)^rq_{\bar y}(x)= \sum_{\substack{\beta\in \mathbb{N}^n \\ |\beta|=r}}x^\beta x^T (a_\beta P_b)x + \sum_{\substack{\beta\in \mathbb{N}^n \\ |\beta|=r+2}} c_\beta x^\beta,
    \end{align}
    where $a_\beta\geq 1$ is the coefficient of $x^\beta$ in $(\sum_{i=1}^nx_i)^r$, and $c_\beta\geq b$ for all $\beta$. 
    
    Notice that the coefficients of the polynomial 
        $$\sum_{\substack{\beta\in \mathbb{N}^n \\ |\beta|=r}}x^\beta (x^TIx)=(\sum_{\substack{\beta\in \mathbb{N}^n \\ |\beta|=r}}x^\beta) (\sum_{i=1}^nx_i^2)$$
    are upper bounded by $n$. This allows us to rewrite~\eqref{eq-P_b} as 
        $$(\sum_{i=1}^nx_i)^rq_{\bar y}(x)= \sum_{\substack{\beta\in \mathbb{N}^n \\ |\beta|=r}}x^\beta x^T (a_\beta P_b + \tfrac{b}{2n}I)x + \sum_{\substack{\beta\in \mathbb{N}^n \\ |\beta|=r+2}} \tilde{c_\beta} x^\beta,$$
    where $\tilde{c}_\beta\geq b/2$ for all $\beta\in \mathbb{N}^n$, with $|\beta|=r+2$. This expression gives us a feasible point $X_0$ of~\eqref{sdp-Q} as follows:
        $$X_0= \left(\begin{array}{c|c|c} 
            Q' & 0 & 0\\
            \hline 
            0 & C' & 0 \\ 
            \hline
            0 & 0 & D'
            \end{array}\right), $$
    where, 
    \begin{align}
        {\footnotesize{
        Q'= \setlength{\fboxsep}{0pt}
        \left(\begin{array}{@{}c@{}c@{}c@{\mkern-5mu}c@{\,}cc*{2}{@{\;}c}@{}}%
        \noalign{\vskip 1.5ex}
          \fbox{\ \ \,$a_{(1,\dots,1)} P_b + \tfrac{b}{2n}I$\ \ \,}
         \\[-0.1pt]
          &
           & \;\ddots \\
           & & \hskip-0.4pt & \hskip-0.4pt \fbox{\ \, $a_\beta P_b + \tfrac{b}{2n}I$ \ \,} \\
           & & & \hspace{0.8cm }\;\ddots \\[-0.5ex]
           & & & &   \fbox{\ \ \,$a_{(n,\dots,n)} P_b + \tfrac{b}{2n}I$\ \ \,}
        \end{array}\right),}}
    \end{align}
    $C'\in \MS^{\binom{n+r+1}{r+2}}$ is a diagonal matrix with $C'_{\beta \beta}= \tilde{c}_{\beta}$, and $D'$ is also a diagonal matrix defined as $D':=\text{Diag}(2R+\bar{y}_1, 2R-\bar{y}_1, \dots, 2R+\bar{y}_m, 2R-\bar{y}_m)$.
    
    Now, we set $R_1:=\min \{b/4n^2,R\}$, and we show that
        $$B(X_0, R_1)\subseteq \mathcal{F}.$$
    
    Let $X\in B(X_0, R_1)$. We shall show that $X$ is positive semidefinite. The upper block of $X$ is of the form
    {\footnotesize{
    \begin{align*}
         \setlength{\fboxsep}{0pt}
        \left(\begin{array}{@{}c@{}c@{}c@{\mkern-5mu}c@{\,}cc*{2}{@{\;}c}@{}}%
        \noalign{\vskip 1.5ex}
          \fbox{\ \ \,$a_{(1,\dots,1)} P_b + \tfrac{b}{2n}I + E_{(1, \dots, 1)}$\ \ \,}
         \\[-0.1pt]
          &
           & \;\ddots \\
           & & \hskip-0.4pt & \hskip-0.4pt \fbox{\ \, $a_\beta P_b + \tfrac{b}{2n}I + E_\beta$ \ \,} \\
           & & & \hspace{0.8cm }\;\ddots \\[-0.5ex]
           & & & &   \fbox{\ \ \,$a_{(n,\dots,n)} P_b + \tfrac{b}{2n}I + E_{(n,\dots,n)}$\ \ \,}
        \end{array}\right),
    \end{align*}}}
    where the absolute value of all entries of the matices $E_{\beta}$ are bounded by $R_1$. Observe that, for all $\beta$, we have $a_\beta P_b + (b/2n)I + E_{\beta} \succeq 0$, because $(b/2n)I + E_{\beta}$ is a diagonally dominant matrix. Similarly,  the second block of the matrix $X$ is of the form 
        $$\mathrm{Diag}(\tilde{c}_\beta) + \mathrm{Diag}(\varepsilon_{(1,\dots,1)}, \dots, \varepsilon_{\beta}, \dots, \varepsilon_{(n,\dots,n)}),$$
    where $\varepsilon_{\beta}\leq R_1$. Since $\tilde{c}_\beta\geq b/2$, this diagonal matrix is positive semidefinite. Finally, the third block of $X$ is of the form
        $$\mathrm{Diag}(2R +\bar y_1, 2R-\bar y_1, \dots, 2R + \bar y_m, 2R-\bar y_m)+ \mathrm{Diag}(\varepsilon_{1}^+, \varepsilon_{1}^-\dots, \varepsilon_{m}^+, \varepsilon_{m}^-),$$
    where $\varepsilon_i^{+}, \varepsilon_{i}^-\leq R_1\leq R$, and thus this diagonal matrix is also positive semidefinite. Therefore $B(X_0, R_1)\subseteq \mathcal{F}$ as desired. 
    
    Finally, we show that there exists an $R_2 \leq 2^{\poly(n)}$ such that $ \mathcal{F} \subseteq B(X_0, R_2)$.
     Let $\tilde{Q}\in \mathcal{F}$, i.e.,
        \begin{equation*}
            \tilde Q = \left(\begin{array}{c|c|c} 
                Q & 0 & 0\\
                \hline 
                0 & C & 0 \\ 
                \hline
                0 & 0 & D
                \end{array}\right), 
                \langle \tilde Q, A_\gamma\rangle = b_\gamma, \ \tilde Q\succeq 0,
        \end{equation*}
    where, $Q$, $C$ and $D$ take the shape as described below formulation (\ref{sdp-Q}), i.e., $Q$ takes the form of the matrix as in (\ref{Q}), $C\in \MS^{\binom{n+r+1}{r+2}}$ is a diagonal matrix with $C_{\beta \beta}=c_\beta$ for some $c_\beta\geq 0$, and $D=\mathrm{Diag}(2R+\tilde{y}_1, 2R-\tilde{y}_1, \dots, 2R+\tilde{y}_m, 2R-\tilde{y}_m)$ for some $\tilde{y}\in [-2R,2R]^m$. Moreover, the following identity holds:
        \begin{align*}
            (\sum_{i=1}^nx_i)^rx^T(\sum_{i=1}^m \tilde{y}_iA_i -C)x =& \sum_{\substack{\beta\in \mathbb{N}^n,\\ |\beta|=r}} x^\beta[x]_1^T Q_\beta [x]_1 + \sum_{\substack{\beta\in \mathbb{N}^n,\\ |\beta|=r+2}}c_\beta x^\beta.
         \end{align*}   
    Since $Q_\beta\succeq 0$ for all $\beta\in \mathbb{N}^n$ with $|\beta|=r$, we can write 
        $$Q_\beta = \sum_{j}v_{\beta_j}v_{\beta_j}^T.$$
    Then we have 
        \begin{align*}
            (\sum_{i=1}^nx_i)^rx^T(\sum_{i=1}^m \tilde{y}_iA_i -C)x =&\sum_{\substack{\beta\in \mathbb{N}^n,\\ |\beta|=r}} x^\beta(\sum_{j=1}^n v_{\beta_j}[x]_1)^2 + \sum_{\substack{\beta\in \mathbb{N}^n,\\ |\beta|=r+2}}c_\beta x^\beta,
        \end{align*}
    So, $\sum_{i=1}^n\tilde{y}_iA_i -C\in \MQ_n^{\smash{(r)}}$ and all its entries are bounded by $2^{\poly(n)}$. By Corollary~\ref{coro-SoS}, we obtain the entries of $v_\beta$ (for $|\beta|=r)$ and $c_\beta$ (for $|\beta|=r+2$) are bounded by $2^{\poly(n)}$. This implies that the entries of $Q$, $C$ and $D$ are bounded by $2^{\poly(n)}$.   
\end{proof}

\section{Application to standard quadratic programs}\label{section:SQP}

Standard quadratic programs (SQPs) can be formulated as copositive programs. Thus, they can be approximated using the techniques of \Cref{SEC:relaxations}.
In this section, we recall the copositive formulation of a generic SQP, and we show that the corresponding upper bounds \eqref{eq:conic-prog1} and \eqref{eq:conic-prog2} can be computed in polynomial time (up to a fixed additive error). Moreover, it turns out that the \emph{reciprocal} of a standard quadratic program can be formulated as a copositive program, too (whenever it is well-defined). Under the condition that the matrix $M$ defining the SQP lies in the cone $\mathrm{SPN}_n^{{\rm low}}$ (see~\eqref{intSPN}), we show that the bounds \eqref{eq:conic-prog1} and \eqref{eq:conic-prog2} corresponding to the copositive formulation of the reciprocal SQP can be computed in polynomial time as well (up to a fixed additive error). 

Let $M\in \MS^n$ be a symmetric matrix. A \emph{standard quadratic optimization problem} is defined as the following minimization problem over the standard simplex $\Delta_n$: 
\begin{align}\tag{SQP}\label{SQ1}
    p_{\min} =\min \Big\{x^TMx: x\in \Delta_n\Big\}.
\end{align}
As shown in \cite{Bomze2000OnCP, dKP02}, we can formulate \eqref{SQ1} as a copositive program
\begin{align}\tag{CP-SQP}\label{COP-SQ1}
    p_{\min} = \max_{\lambda\in \mathbb{R}} \Big\{\lambda: M-\lambda J \in \COP_n\Big\},
\end{align}
or equivalently,
\begin{equation}\label{COP-SQ2}
    p_{\min} = -\min_{\lambda\in \mathbb{R}} \Big\{\lambda: M+\lambda J \in \COP_n\Big\}.
\end{equation}
Assume that $p_{\min }>0$. Then, we can also write $1/p_{\min}$ as the optimum of a copositive program (see Appendix \ref{appendix:cop-inv}). Namely, we have 
\begin{align}\tag{CP-INV}\label{COP-INVERSE1}
    \frac{1}{p_{\min}}=\min_{\lambda\in \mathbb{R}}\Big\{\lambda \colon \lambda M-J\in \COP_n\Big\}.
\end{align}

\subsection{Approximating SQPs}
The relaxations~\eqref{eq:conic-prog1} and~\eqref{eq:conic-prog1} of the copositive formulation~\eqref{COP-SQ1} of an SQP take the form:
\begin{equation}\label{COP-SQ-K1}
    p_\MK^{\smash{(r)}} = -\min_{\lambda \in \R} \Big\{\lambda: M+\lambda J \in \MK_n^{\smash{(r)}}\Big\} ~\geq p_{\min},
\end{equation}
\begin{equation}\label{COP-SQ-Q1}
    p_\MQ^{\smash{(r)}} = -\min_{\lambda \in \R} \Big\{\lambda: M+\lambda J \in \MQ_n^{\smash{(r)}}\Big\} ~\geq p_{\min}.
\end{equation}
As we show now, these bounds can always be computed in polynomial time (up to a given precision).
\begin{thm}[Restatement of \Cref{PROP:main_SQP}] \label{THM:SQP}
    Let $r\in \mathbb{N}$, and let $M\in \mathcal S^n$ with $\size(M) \leq \poly(n)$.  Then, the bounds~\eqref{COP-SQ-K1} and~\eqref{COP-SQ-Q1} on the minimum $p_{\min}$ of the SQP defined by $M$ can be computed up to additive error $\ep > 0$ in time polynomial in  $n$ and $\log(1/\ep)$.
\end{thm}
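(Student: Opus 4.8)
The plan is to reduce the statement to \Cref{THM:main}, or more precisely to its two special cases \Cref{thm:cones-K} and \Cref{thm:bounds-Q}, by checking that the copositive formulation~\eqref{COP-SQ1} of the SQP defined by $M$, together with its relaxations~\eqref{COP-SQ-K1} and~\eqref{COP-SQ-Q1}, satisfies the hypotheses~\eqref{PBOS} and~\eqref{intSPN}. First I would note that~\eqref{COP-SQ1} is an instance of~\eqref{cop-program} with $m=1$, $A_1 = J$, $C = -M$, and objective $b^Ty = \lambda$ up to an overall sign, which is immaterial for approximate computation (negating a quantity computed up to additive error $\ep$ yields its negation up to the same error); hence~\eqref{COP-SQ-K1} and~\eqref{COP-SQ-Q1} are instances of~\eqref{eq:conic-prog1} and~\eqref{eq:conic-prog2}. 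Throughout I would set $\mu := \max_{i,j} |M_{ij}|$; since $\size(M) \le \poly(n)$, the number $\mu$ is a nonnegative rational of bit size $\poly(n)$, so in particular $\mu \le 2^{\poly(n)}$.

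To verify~\eqref{intSPN} for the tuple $(J, -M)$, I would take $\bar\lambda := \mu + 1 \in \Q$ (of bit size $\poly(n)$). Then every entry of $M + \bar\lambda J$ equals $M_{ij} + \mu + 1 \ge 1$, so with $P := \tfrac12 I$ and $N := M + \bar\lambda J - \tfrac12 I$ one has $M + \bar\lambda J = P + N$ where $P \succ 0$ with $\lambda_{\min}(P) = \tfrac12 \ge 2^{-\poly(n)}$, $N$ has nonnegative entries, and both $P$ and $N$ are encodable in $\poly(n)$ bits. Thus $M + \bar\lambda J \in \SPN^{\rm low}_n$, which is exactly~\eqref{intSPN}.

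To verify~\eqref{PBOS}, fix $r \in \N$ and let $\mathcal C^{(r)}$ denote either $\MK_n^{(r)}$ or $\MQ_n^{(r)}$, with feasible set $F := \{\lambda \in \R : M + \lambda J \in \mathcal C^{(r)}\}$. For $\lambda \ge \mu$ the matrix $M + \lambda J$ has nonnegative entries, hence lies in $\mathcal N_n$, and any matrix with nonnegative entries lies in $\mathcal C^{(r)}$ because the defining polynomial identity in~\eqref{eq:def-cones-Kr} (resp.~\eqref{eq: def-conesQ}) then already has nonnegative coefficients; so $\mu \in F$ and $F \neq \emptyset$. Since $\mathcal C^{(r)}$ is a closed cone and $\lambda \mapsto M + \lambda J$ is affine, $F$ is closed. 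Moreover $\mathcal C^{(r)} \subseteq \COP_n$, so any $\lambda \in F$ gives $M + \lambda J \in \COP_n$; evaluating the associated quadratic form on the simplex $\Delta_n$ forces $-\lambda \le p_{\min}$, and since $|p_{\min}| \le \mu$ this yields $\lambda \ge -\mu$. Hence $F$ is a nonempty closed subset of $[-\mu, \infty)$, so the minimum defining~\eqref{COP-SQ-K1} (resp.~\eqref{COP-SQ-Q1}) is attained at some $\lambda^* \in [-\mu, \mu]$; as $\mu \le 2^{\poly(n)}$, this is a polynomially bounded optimal solution and~\eqref{PBOS} holds.

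With both conditions verified, \Cref{thm:cones-K} (resp.~\Cref{thm:bounds-Q}) gives that $\min\{\lambda : M + \lambda J \in \MK_n^{(r)}\}$ (resp.\ with $\MQ_n^{(r)}$) can be computed up to additive error $\ep$ in time $\poly(n, \log(1/\ep))$, and negating yields the same for $p_\MK^{(r)}$ and $p_\MQ^{(r)}$. I do not expect a genuine obstacle here: the copositive program~\eqref{COP-SQ1} is one-dimensional, so every feasible value of $\lambda$ for its relaxations is squeezed between $-\mu$ and $\mu$, and the entire argument is a verification that \Cref{THM:main} applies. The only points requiring slight care are the sign flip in the objective and the elementary observation that $\mathcal N_n \subseteq \MK_n^{(r)} \cap \MQ_n^{(r)}$, which simultaneously supplies the explicit $\SPN^{\rm low}_n$-point $\bar\lambda = \mu + 1$ and the upper bound $\lambda^* \le \mu$ needed for~\eqref{PBOS}.
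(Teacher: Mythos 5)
Your proposal is correct and follows essentially the same route as the paper: it verifies \eqref{intSPN} via $\bar\lambda=\max_{i,j}|M_{ij}|+1$ with an explicit $P+N$ splitting of $M+\bar\lambda J$ (the paper takes $P=I$, you take $P=\tfrac12 I$), and verifies \eqref{PBOS} by trapping the optimal $\lambda$ in $[-\mu,\mu]$ (the paper instead notes $-\bar\lambda$ is infeasible because $M-\bar\lambda J$ has all negative entries, while you use copositivity on the simplex — a minor variation of the same bounding idea), then invokes \Cref{THM:main}.
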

\begin{proof}
    We prove the result just for the bound~\eqref{COP-SQ-K1}. The proof for~\eqref{COP-SQ-Q1} is analogous. It suffices to show that \eqref{COP-SQ-K1} satisfies the conditions of \Cref{THM:main}. 
    
    First, we show that~\eqref{COP-SQ-K1} satisfies \eqref{intSPN}. For this, we set $\bar \lambda := \max \vert M_{ij}\vert +1 \leq 2^{\poly(n)}$. Then, the entries of the matrix $M+\bar\lambda J$ are all at least 1. Thus, we can write
        $$M+\bar\lambda J= \underbrace{I}_{P} + \underbrace{M+\bar\lambda J - I}_{N},$$
    showing that \eqref{COP-SQ-K1} satisfies \eqref{intSPN} (with interior point $\bar \lambda$).
    
    It remains to show that~\eqref{COP-SQ-K1} satisfies \eqref{PBOS}. Observe that $-\bar\lambda$ is not feasible for~\eqref{COP-SQ-K1}, as all entries of $M-\bar \lambda J$ are negative. Therefore, since the cones $\MK_n^{\smash{(r)}}$ are closed, \eqref{COP-SQ-K1} must have an optimal solution in the interval $[-\bar\lambda, \bar\lambda]$.
\end{proof}

\subsection{Approximating reciprocal SQPs}
Now we assume that $p_{\min}>0$, and we consider the copositive program~\eqref{COP-INVERSE1}. The relaxations~\eqref{eq:conic-prog1} and~\eqref{eq:conic-prog2} of~\eqref{COP-INVERSE1} take the form:
\begin{align}
    q_\MK^{\smash{(r)}} &= \min \{\lambda \colon \lambda M-J\in \mathcal K_n^{\smash{(r)}} \} ~\geq \frac{1}{p_{\min}},\label{eq: sdp-knr1}\\
    q_\MQ^{\smash{(r)}} &= \min \{\lambda \colon \lambda M-J\in \mathcal Q_n^{\smash{(r)}} \} ~\geq \frac{1}{p_{\min}}.\label{eq: sdp-qnr1}
\end{align}
We show that we can compute these bounds in polynomial time (up to a small additive error) under the assumption that $M$ belongs to the set $\SPN^{\rm low}_n$ (see \eqref{intSPN}).

\begin{thm} \label{THM:COPINV}
Let $r\in \mathbb{N}$, and let $M \in \SPN^{\rm low}_n$ (thus with $\size(M) \leq \poly(n)$).  Then, the bounds~\eqref{eq: sdp-knr1} and~\eqref{eq: sdp-qnr1} on the reciprocal $1/p_{\min}$ of the minimum of the SQP defined by $M$ can be computed up to additive error $\ep > 0$ in time polynomial in  $n$ and $\log(1/\ep)$.
\end{thm}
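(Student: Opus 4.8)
The plan is to deduce the claim from~\Cref{THM:main}, applied to the copositive program~\eqref{COP-INVERSE1}, which is of the form~\eqref{cop-program} with $m = 1$, $A_1 = M$, $C = J$, and objective $b = (1)$. For this data, the relaxations~\eqref{eq:conic-prog1} and~\eqref{eq:conic-prog2} are precisely~\eqref{eq: sdp-knr1} and~\eqref{eq: sdp-qnr1}, so it suffices to check that they satisfy the two hypotheses~\eqref{PBOS} and~\eqref{intSPN} of~\Cref{THM:main}. I would carry this out for~\eqref{eq: sdp-knr1}; the argument for~\eqref{eq: sdp-qnr1} is the same.

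To verify~\eqref{intSPN}, fix a decomposition $M = P + N$ witnessing $M \in \SPN^{\rm low}_n$, with $P \succ 0$, $\lambda_{\min}(P) \geq 2^{-\poly(n)}$, $N \geq 0$, and $P, N$ of bit size $\poly(n)$. I would then choose an integer $\bar\lambda$ of bit size $\poly(n)$ large enough that $\bar\lambda\,\lambda_{\min}(P) \geq n + 1$ (possible since $\lambda_{\min}(P) \geq 2^{-\poly(n)}$), and write $\bar\lambda M - J = (\bar\lambda P - J) + \bar\lambda N$. Here $\bar\lambda N \geq 0$, while Weyl's inequality together with $\lambda_{\max}(J) = n$ gives $\lambda_{\min}(\bar\lambda P - J) \geq \bar\lambda\,\lambda_{\min}(P) - n \geq 1$; since both summands have rational entries of bit size $\poly(n)$, this shows $\bar\lambda M - J \in \SPN^{\rm low}_n$, i.e.~\eqref{intSPN} holds for~\eqref{eq: sdp-knr1}, with interior point $\bar\lambda$.

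For~\eqref{PBOS}, I would show that the feasible region of~\eqref{eq: sdp-knr1} lies in $[-R, R]$ for some $R \leq 2^{\poly(n)}$ and that the optimum is attained. Upper bound: since $\bar\lambda P - J \succeq 0$ and $\bar\lambda N \geq 0$, we have $\bar\lambda M - J \in \SPN_n = \MK_n^{\smash{(0)}} \subseteq \MK_n^{\smash{(r)}}$ (the cones are nested, as one may multiply an SoS certificate by $(\sum_i x_i^2)^r$), so $\bar\lambda$ is feasible and $q_\MK^{\smash{(r)}} \leq \bar\lambda$. Lower bound: any feasible $\lambda$ has $\lambda M - J \in \MK_n^{\smash{(r)}} \subseteq \COP_n$, so evaluating the associated quadratic form at each $x \in \Delta_n$ gives $\lambda\,x^T M x \geq 1$; as $N \geq 0$ and $x \geq 0$ on $\Delta_n$, one has $x^T M x \geq x^T P x \geq \lambda_{\min}(P)\,\|x\|_2^2 \geq \lambda_{\min}(P)/n > 0$ there, so every feasible $\lambda$ is bounded below by the positive quantity $1/p_{\min} \geq 2^{-\poly(n)}$. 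Since $\MK_n^{\smash{(r)}}$ is a closed cone, the infimum in~\eqref{eq: sdp-knr1} is attained, at some $\lambda^* \in [1/p_{\min}, \bar\lambda] \subseteq [-\bar\lambda, \bar\lambda]$, which is~\eqref{PBOS} with $R = \bar\lambda$. Invoking~\Cref{THM:main} then finishes the proof for~\eqref{eq: sdp-knr1}; the case of~\eqref{eq: sdp-qnr1} is identical, using $\SPN_n = \MQ_n^{\smash{(0)}} \subseteq \MQ_n^{\smash{(r)}} \subseteq \COP_n$ and the closedness of $\MQ_n^{\smash{(r)}}$.

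The eigenvalue estimates above are routine; the one point deserving real care is~\eqref{PBOS}, since (as stressed in the introduction) boundedness of the relaxation does not formally follow from boundedness of~\eqref{COP-INVERSE1}. What rescues it is the two-sided structure $\SPN_n \subseteq \MK_n^{\smash{(r)}} \subseteq \COP_n$: the left inclusion supplies a small, explicit feasible point $\bar\lambda$, while the right inclusion forces every feasible $\lambda$ to stay above $1/p_{\min}$, a bound that exists only because $M \in \SPN^{\rm low}_n$ guarantees $p_{\min} \geq 2^{-\poly(n)} > 0$ through its positive definite part $P$. I therefore expect verifying~\eqref{PBOS}, and in particular quantifying $p_{\min}$, to be the main (if mild) obstacle.
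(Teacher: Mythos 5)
Your proposal is correct and follows essentially the same route as the paper: reduce to \Cref{THM:main}, verify \eqref{intSPN} by scaling the $\SPN^{\rm low}_n$ decomposition $M=P+N$ by a suitable $\bar\lambda$ and invoking Weyl's inequality, and then verify \eqref{PBOS} using feasibility of $\bar\lambda$ together with a lower bound on feasible $\lambda$ and closedness of the cone. The only (inessential) difference is in the lower bound: the paper simply observes that $-\bar\lambda$ is infeasible and lets convexity and closedness do the rest, while you derive the sharper bound $\lambda \geq 1/p_{\min}$ from $\MK_n^{\smash{(r)}}\subseteq\COP_n$; both are valid.
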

\begin{proof}
    We prove the result just for the bound~\eqref{eq: sdp-knr1}. The proof for~\eqref{eq: sdp-qnr1} is analogous. It suffices to show that \eqref{eq: sdp-knr1} satisfies the conditions of \Cref{THM:main}.
        
    First, we show that~\eqref{eq: sdp-knr1} satisfies \eqref{intSPN}. Since $M\in \SPN^{\rm low}_n$, we have that
        $$M=P+N,$$
    where $P$ and $N$ are matrices that can be encoded in $\poly(n)$ bits, with $P\succ 0$, $N\geq 0$, and $a:=\lambda_{\min}(P) \geq 2^{-\poly(n)}$. Let $b\in \mathbb{Q}$ with $\size(b) \leq \poly(n)$ such that $2^{-\poly(n)} \leq b<a$. We set $\bar \lambda=4n/b$, so that $\bar\lambda$ can be encoded in $\poly(n)$ bits. Then, $(4n/b)M-J = (4n/b)(P+N)-J = (4n/b)P-J + (4n/b)N$. Using Weyl's inequality, we find that $\lambda_{\min}((4n/b)P-J)\geq n$, thus showing that ~\eqref{eq: sdp-knr1} satisfies \eqref{intSPN} (with interior point $\bar\lambda$).
    
    Next, we show that \eqref{eq: sdp-knr1} satisfies \eqref{PBOS}. We have seen that $\bar \lambda$ is feasible for~\eqref{eq: sdp-knr1}. On the other hand, it is straightforward to check that $-\bar\lambda$ is not feasible. Since the cones $\MK_n^{\smash{(r)}}$ are closed, \eqref{eq: sdp-knr1} must have an optimal solution in the interval~$[-\bar\lambda, \bar\lambda]$. 
\end{proof}

\begin{obs} \label{obs:rel q-p}
    It turns out (see \Cref{appendix:rel q-p}) that the relaxations \eqref{COP-SQ-K1} and \eqref{eq: sdp-knr1} to an SQP and its reciprocal, respectively, satisfy
    \begin{equation}\label{eq:rel q-p}
        q_\MK^{\smash{(r)}}= \frac{1}{p_{\MK}^{\smash{(r)}}}.
    \end{equation}
    For this reason, it may seem unnecessary to apply \Cref{THM:COPINV} to the relaxation~\eqref{eq: sdp-knr1} of the reciprocal SQP, rather than applying \Cref{THM:SQP} to the relaxation~\eqref{COP-SQ-K1} of the original SQP and taking the reciprocal. However, we note that these theorems only guarantee that $q_\MK^{\smash{(r)}}$ and $p_{\MK}^{\smash{(r)}}$ can be computed up to an additive error~$\ep > 0$. Thus, taking the reciprocal of an approximately optimal solution to~\eqref{COP-SQ-K1} yields an approximation
    \[
        \frac{1}{p_{\MK}^{\smash{(r)}} \pm \ep} \approx \frac{1}{p_{\MK}^{\smash{(r)}}} = q_\MK^{\smash{(r)}}
    \]
    of $q_\MK^{\smash{(r)}}$ which is potentially far worse than the approximation $q_\MK^{\smash{(r)}} \pm \ep$ guaranteed by applying~\Cref{THM:COPINV} directly to~\eqref{eq: sdp-knr1}.  
    An analogous statement applies to the relaxations~\eqref{COP-SQ-Q1} and \eqref{eq: sdp-qnr1}. 
\end{obs}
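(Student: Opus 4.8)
The plan is to deduce the identity $q_\MK^{(r)} = 1/p_\MK^{(r)}$ from the single fact that $\MK_n^{(r)}$ is a cone, so that membership in it is preserved under scaling by a positive constant; the identity $q_\MQ^{(r)} = 1/p_\MQ^{(r)}$ then follows by repeating the argument with $\MQ_n^{(r)}$ in place of $\MK_n^{(r)}$, so I treat only the $\MK$ case.

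First I would rewrite $p_\MK^{(r)}$ in a convenient form. Substituting $\mu = -\lambda$ in~\eqref{COP-SQ-K1} gives
\[
    p_\MK^{(r)} = \max\{\mu \in \R : M - \mu J \in \MK_n^{(r)}\}.
\]
This maximum is finite and attained, since $\MK_n^{(r)}$ is closed and the feasible set is bounded above (evaluating the copositive form of $M - \mu J$ at the all-ones vector forces $\mu \le (\mathbf 1^T M \mathbf 1)/n^2$), and it is strictly positive because $p_\MK^{(r)} \ge p_{\min} > 0$. I would also record that $M$ has strictly positive diagonal: $M_{ii} = e_i^T M e_i \ge p_{\min} > 0$ since $e_i \in \Delta_n$. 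Consequently, for $\lambda \le 0$ the copositive form of $\lambda M - J$ evaluated at $e_i$ equals $\lambda M_{ii} - 1 < 0$, so $\lambda M - J \notin \COP_n \supseteq \MK_n^{(r)}$; hence every $\lambda$ feasible for~\eqref{eq: sdp-knr1} is strictly positive.

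It then remains to verify two inequalities, both of which follow by dividing a cone membership by a positive scalar. For ``$\ge$'': if $\lambda > 0$ is feasible for~\eqref{eq: sdp-knr1}, then $\lambda M - J \in \MK_n^{(r)}$, so $M - (1/\lambda)J \in \MK_n^{(r)}$, whence $1/\lambda \le p_\MK^{(r)}$, i.e.\ $\lambda \ge 1/p_\MK^{(r)}$; taking the infimum over feasible $\lambda$ gives $q_\MK^{(r)} \ge 1/p_\MK^{(r)}$. For ``$\le$'': writing $\mu^\star := p_\MK^{(r)} > 0$, we have $M - \mu^\star J \in \MK_n^{(r)}$ by attainment, hence $(1/\mu^\star) M - J \in \MK_n^{(r)}$, so $\lambda = 1/\mu^\star$ is feasible for~\eqref{eq: sdp-knr1} and $q_\MK^{(r)} \le 1/\mu^\star = 1/p_\MK^{(r)}$. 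Combining gives the claim, and the same steps with $\MQ_n^{(r)}$ give $q_\MQ^{(r)} = 1/p_\MQ^{(r)}$.

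This is essentially bookkeeping, so I expect no real obstacle; the only points needing care are those making the reciprocals meaningful, namely $p_{\min} > 0$ (so both optima are strictly positive) and the absence of a feasible $\lambda \le 0$ for~\eqref{eq: sdp-knr1} (which follows from $M$ having positive diagonal). If one wishes to avoid appealing to attainment, both inequalities can instead be obtained by passing to the limit along feasible points $\mu \uparrow p_\MK^{(r)}$ and $\lambda \downarrow q_\MK^{(r)}$, using only that $\MK_n^{(r)}$ is a cone.
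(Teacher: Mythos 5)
Your argument is essentially the paper's own (Appendix \ref{appendix:rel q-p}): rewrite \eqref{COP-SQ-K1} as $p_{\MK}^{(r)}=\max\{\mu: M-\mu J\in\MK_n^{(r)}\}$ and transfer membership back and forth by scaling with a positive scalar, giving the two inequalities $q_\MK^{(r)}\geq 1/p_{\MK}^{(r)}$ and $q_\MK^{(r)}\leq 1/p_{\MK}^{(r)}$. Your extra bookkeeping (attainment via closedness of $\MK_n^{(r)}$ and the bound $\mu\leq \mathbf{1}^TM\mathbf{1}/n^2$, and strict positivity of every $\lambda$ feasible for \eqref{eq: sdp-knr1} via $M_{ii}\geq p_{\min}>0$) is correct and is in fact left implicit in the paper's appendix, which simply plugs in $\lambda=q_\MK^{(r)}$ and $\lambda^*=p_{\MK}^{(r)}$.

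The one step that does not hold up is your justification that $p_{\MK}^{(r)}>0$ ``because $p_{\MK}^{(r)}\geq p_{\min}$''. That inequality is backwards: since $\MK_n^{(r)}\subseteq\COP_n$, the feasible set of the max formulation only shrinks, so $p_{\MK}^{(r)}\leq p_{\min}$ (the ``$\geq p_{\min}$'' printed next to \eqref{COP-SQ-K1} is a sign slip in the paper; note also that $q_\MK^{(r)}\geq 1/p_{\min}$ together with the claimed identity forces $p_{\MK}^{(r)}\leq p_{\min}$). Consequently $p_{\min}>0$ alone does not give $p_{\MK}^{(r)}>0$, and you need positivity to divide by $p_{\MK}^{(r)}$ in the ``$\leq$'' direction. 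It can be supplied from the context in which the observation is used: under the hypothesis $M\in\SPN^{\rm low}_n$ of \Cref{THM:COPINV}, one has $M-\mu J=(P-\mu J)+N\in\SPN_n\subseteq\MK_n^{(r)}$ for a sufficiently small $\mu>0$, hence $p_{\MK}^{(r)}>0$; alternatively, any feasible $\lambda$ for \eqref{eq: sdp-knr1} (whose existence \Cref{THM:COPINV} establishes) yields $M-(1/\lambda)J\in\MK_n^{(r)}$ and thus $p_{\MK}^{(r)}\geq 1/\lambda>0$, whereas if $p_{\MK}^{(r)}\leq 0$ then \eqref{eq: sdp-knr1} is infeasible and the identity only holds in the extended sense $q_\MK^{(r)}=+\infty$. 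With positivity obtained this way, your proof is complete and coincides with the paper's.
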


\subsection{Weighted stability number of a graph}
Let  $G=(V=[n],E)$, and let $\omega\in \mathbb{R}^{V}$ be a vector of positive vertex weights, i.e.,  $\omega_i>0$ for all $i\in V$. The {\em weighted stability number} $\alpha(G,\omega)$ denotes the maximum weight $\omega(S)=\sum_{i\in S}\omega_i$ of a stable set $S$. The following formulation of $1/\alpha(G,\omega)$ was shown in \cite{Gibbons1996} as a generalization of the Motzkin-Straus formulation \eqref{EQ:alpha_SQP} shown in the introduction. Consider the following set of matrices: 
\begin{align}\label{ms-w}
\begin{split}
    \mathcal{M}(G,\omega)=\{B\in \mathcal{S}^n:B_{ii}=\frac{1}{\omega_i} \text{ for $i\in V$}, B_{ij}&\geq \frac{1}{2}(B_{ii}+B_{ij}) \text{ for } \{i,j\}\in E, \\ 
    B_{ij}&=0 \text{ for } \{i,j\}\in \overline E\}.
\end{split}    
\end{align}
Then, for every $B\in \mathcal{M}(G,\omega)$, it holds
\begin{align}
    \frac{1}{\alpha(G,\omega)}= \min\{x^TBx \colon x\in \Delta_n\},
\end{align}
and we have the following copositive formulation for the weighted stability number of the graph $G$, \cite{dKP02}:
\begin{align}
    \alpha(G,\omega)=\max_{t\in \mathbb{R}}\{t:tB-J\in \COP_n\}.
\end{align}
The following relaxations $\vartheta^{\smash{(r)}}(G,\omega)$ and $\nu^{\smash{(r)}}(G,\omega)$ were introduced, respectively, by de Klerk \& Pasechnik \cite{dKP02} and Peña, Vera \& Zuluaga \cite{PVZ}. We remark that these bounds were defined in the context of unweighted graphs, but they can be easily generalized (see \cite{LV22}). The relaxations are respectively given by
\begin{align}
    \vartheta^{\smash{(r)}}(G,\omega)=\max_{t\in \mathbb{R}}\{t:tB-J\in \MK_n^{\smash{(r)}}\}, \\
    \nu^{\smash{(r)}}(G,\omega)=\max_{t\in \mathbb{R}}\{t:tB-J\in \MQ_n^{\smash{(r)}}\}.
\end{align}
Then, we have that $\vartheta^{\smash{(r)}}(G,\omega)\to \alpha(G,\omega)$ and $\nu^{\smash{(r)}}(G,\omega)\to \alpha(G,\omega)$ as $r\to \infty$. For the unweighted case, i.e., $\omega=(1, \dots, 1)$, de Klerk \& Pasechnik \cite{dKP02} showed that the bounds $\vartheta^{\smash{(0)}}(G)$ (and $\nu^{\smash{(0)}}(G)$) are at least as good as the well-known bound $\vartheta(G)$ introduced by Lovász in his seminal paper \cite{Lov}. We refer to \cite{GL07}, \cite{LV23}, \cite{SV} for further results about the exactness of these hierarchies. 
 
We show that $\vartheta^{\smash{(r)}}(G,\omega)$ and $\nu^{\smash{(r)}}(G,\omega)$ can be computed in polynomial time (up to a fixed precision).
\begin{cor}
Let $G=(V=[n],E)$ be a graph, let $\omega\in \mathbb{R}_+^V$ be a vector of weights, and let $B\in \mathcal{M}(G,\omega)$. Assume that $\size(\omega)=\poly(n)$ and $\size(B)=\poly(n)$. Then, the parameters $\vartheta^{\smash{(r)}}(G,\omega)$ and $ \nu^{\smash{(r)}}(G,\omega)$ can be computed up to an additive error $\varepsilon>0$ in time polynomial in $n$ and $\log(1/\ep)$.  
\end{cor}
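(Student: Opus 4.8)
The plan is to recognize $\vartheta^{(r)}(G,\omega)$ and $\nu^{(r)}(G,\omega)$ as instances of the reciprocal-SQP relaxations \eqref{eq: sdp-knr1} and \eqref{eq: sdp-qnr1}, and then invoke \Cref{THM:COPINV}. Indeed, for any $B\in\mathcal{M}(G,\omega)$ we have $\min\{x^TBx:x\in\Delta_n\}=1/\alpha(G,\omega)$, so the copositive formulation of $\alpha(G,\omega)$ is exactly the program~\eqref{COP-INVERSE1} with $M=B$ (with $p_{\min}=1/\alpha(G,\omega)$), and the parameters $\vartheta^{(r)}(G,\omega)$, $\nu^{(r)}(G,\omega)$ coincide with the bounds $q_\MK^{(r)}$, $q_\MQ^{(r)}$ for this choice of $M$. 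By \Cref{THM:COPINV}, it therefore suffices to verify that $B\in\SPN^{\rm low}_n$ (which in particular re-confirms $p_{\min}>0$, so that~\eqref{COP-INVERSE1} is well defined).

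To check $B\in\SPN^{\rm low}_n$, I would exhibit the decomposition $B=P+N$ with $P:=\mathrm{Diag}(1/\omega_1,\dots,1/\omega_n)$ and $N:=B-P$, i.e.\ $N$ is the off-diagonal part of $B$. Then $N_{ii}=0$ and, for $i\neq j$, $N_{ij}=B_{ij}$ is either $0$ (if $\{i,j\}\in\overline{E}$) or at least $\tfrac12(B_{ii}+B_{jj})>0$ (if $\{i,j\}\in E$) by the definition~\eqref{ms-w} of $\mathcal{M}(G,\omega)$; hence $N\geq 0$. The matrix $P$ is positive definite with $\lambda_{\min}(P)=1/\max_i\omega_i$; since $\size(\omega)=\poly(n)$ we have $\max_i\omega_i\leq 2^{\poly(n)}$, so $\lambda_{\min}(P)\geq 2^{-\poly(n)}$. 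Finally, the entries of $P$ are the rationals $1/\omega_i$ of bit size $\poly(n)$, and the entries of $N$ are entries of $B$, of bit size $\poly(n)$ since $\size(B)=\poly(n)$. Thus $B\in\SPN^{\rm low}_n$, and \Cref{THM:COPINV} yields the claimed computability up to additive error $\ep$ in time polynomial in $n$ and $\log(1/\ep)$.

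There is no substantial obstacle beyond bookkeeping: the only points requiring care are confirming nonnegativity of the off-diagonal entries of $B$ (immediate from~\eqref{ms-w}) and translating the hypotheses $\size(\omega),\size(B)=\poly(n)$ into the quantitative minimum-eigenvalue and bit-size bounds demanded by $\SPN^{\rm low}_n$; both are routine. I would then simply state that the same argument applies verbatim to $\vartheta^{(r)}$ via~\eqref{eq: sdp-knr1} and to $\nu^{(r)}$ via~\eqref{eq: sdp-qnr1}, completing the proof.
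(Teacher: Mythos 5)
Your proof takes essentially the same approach as the paper: apply \Cref{THM:COPINV} with $M=B$ after observing that $B\in\SPN_n^{\rm low}$ via the decomposition $B=\mathrm{Diag}(1/\omega_1,\dots,1/\omega_n)+B'$ with $B'\geq 0$. You simply spell out the bookkeeping (nonnegativity of off-diagonal entries from~\eqref{ms-w}, the eigenvalue and bit-size bounds) that the paper's proof states without detail.
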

\begin{proof}
    We apply Theorem \ref{THM:COPINV} with $M=B$. Observe that $B\in \SPN_n^{\text{low}}$ because it can be decomposed as $B=\text{Diag}(1/\omega_1, \dots, 1/\omega_n) + B'$, where $B'\geq 0$.   
\end{proof}

\section{Proof of \Cref{THM:main2} and an application}\label{section:bounded-CP}

In \Cref{section:main-results}, we have seen that the relaxations \eqref{cop-program-approx1} and \eqref{cop-program-approx3} of a copositive program~\eqref{cop-program} can be computed in polynomial time (up to a small additive error) if  they satisfy both conditions \eqref{PBOS} and \eqref{intSPN}. Recall that, by definition, condition~\eqref{intSPN} is satisfied by the relaxations if and only if it is satisfied by the original copositive program~\eqref{cop-program}. The same is not true for~\eqref{PBOS}. Indeed, as we will see in \Cref{section:examples}, there are examples where the program \eqref{cop-program} has a polynomially bounded optimal solution, but its relaxations do not.

Nonetheless, we show in this section that, as long as the {\em original} copositive program \eqref{cop-program} satisfies properties \eqref{PBOS} and  \eqref{intSPN}, then we can construct a \emph{new} copositive program with the same optimal value whose \emph{relaxations} satisfy~\eqref{PBOS} and \eqref{intSPN}. In particular, these relaxations can be computed in polynomial time (up to a small additive error) using \Cref{THM:main}.

\subsection{A modified copositive program}\label{new-bounds}
Here we assume that \eqref{cop-program} has a polynomially bounded optimal solution $y^*$, and that it satisfies \eqref{intSPN} with vector $\bar y \in \R^m$. Let $R \leq 2^{\poly(n)}$ be such that $y^*, \bar y \in [-R,R]^m$ (see \Cref{obs:R}) and let $D=\mathrm{Diag}(2R-y_1,2R+y_1,\dots,2R-y_m,2R+y_m) \in \MS^{2m}$. We consider the program
\begin{align}\label{CP-1}\tag{CP'}
    p' = \min\Big\{ b^Ty: M_y= \left(\begin{array}{c|c} 
        \sum_{i=1}^my_iA_i-C & 0\\
        \hline 
        0 & D
        \end{array}\right) \in \COP_{n+2m}\Big\}.
\end{align}
We can write $M_y=\sum_{i=1}^m y_i \bar A_i - \bar C$, with $\bar A_i, \bar C \in \mathcal S^{n+2m}$ defined as
\begin{align}\label{A-bar}
{\footnotesize{
 \bar A_i= \left(\begin{array}{c|c} 
            A_i & 0\\
            \hline 
            0 &  \begin{smallmatrix}
              \ddots & & & &\\
               & -1 & & &\\
               & & 1  & &\\
               & & & & \ddots
            \end{smallmatrix}
            \end{array}\right), \quad \bar C= \left(\begin{array}{c|c} 
            C & 0\\
            \hline 
            0 &  \begin{smallmatrix}
            & & \\
            & & \\
              - 2R & & \\
               & \ddots & \\
               & & \\
               & & -2R \\
               & &
            \end{smallmatrix}
            \end{array}\right).    
            }}
\end{align}
Thus, program (\ref{CP-1}) is a copositive program of the form (\ref{cop-program}).
We observe that $p^*=p'$, i.e.,  the optimum of problems (\ref{cop-program}) and (\ref{CP-1}) are equal. Indeed, we have that $p^*\leq p'$ because every feasible point for problem~(\ref{CP-1}) is also feasible for problem~(\ref{cop-program}), as the cone $\COP$ is closed under taking principal submatrices. Additionally, the optimal solution $y^*$ of problem \eqref{cop-program} is also feasible for program~\eqref{CP-1}. 

The relaxations \eqref{eq:conic-prog1}, \eqref{eq:conic-prog2} of~\eqref{CP-1} take the following form:
\begin{align}
    p{'}_{\MK}^{\smash{(r)}} = \min\Big\{ b^Ty: M_y= \left(\begin{array}{c|c} 
        \sum_{i=1}^my_iA_i-C & 0\\
        \hline 
        0 &  D 
        \end{array}\right) \in \MK_{n+2m}^{\smash{(r)}}\Big\} \label{CP-1-Knr},  \tag{CP'-K}\\
    p{'}_{\MQ}^{\smash{(r)}} = \min\Big\{ b^Ty: M_y= \left(\begin{array}{c|c} 
        \sum_{i=1}^my_iA_i-C & 0\\
        \hline 
        0 &  D 
        \end{array}\right) \in \MQ_{n+2m}^{\smash{(r)}}\Big\}\label{CP-1-Qnr} \tag{CP'-K}.
\end{align}

We have the following result, which immediately implies~\Cref{THM:main2}.
\begin{thm}\label{thm-new-bound}
    Let $r\in \mathbb{N}$. If \eqref{cop-program} satisfies conditions~\eqref{PBOS} and~\eqref{intSPN}, then the optimal values of the relaxations \eqref{CP-1-Knr} and \eqref{CP-1-Qnr} of the equivalent copositive program~\eqref{CP-1} can be computed up to additive error $\varepsilon > 0$ in time polynomial in $n$ and $\log(1/\varepsilon)$.
\end{thm}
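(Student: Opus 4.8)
The plan is to verify that the relaxations \eqref{CP-1-Knr} and \eqref{CP-1-Qnr} of the modified program \eqref{CP-1} satisfy \emph{both} conditions \eqref{PBOS} and \eqref{intSPN}, so that \Cref{THM:main} --- concretely, \Cref{thm:cones-K} and \Cref{thm:bounds-Q} --- applies and yields the claimed running time. First I would record that \eqref{CP-1} is itself a copositive program of the form \eqref{cop-program} of polynomial bit size in $n$: its defining matrices $\bar A_i, \bar C$ from \eqref{A-bar} have entries among those of $A_i, C$ together with $\pm 1$ and $-2R$, and since $R \leq 2^{\poly(n)}$ and $m = \poly(n)$, these have size $\poly(n)$; moreover the matrices now have dimension $n+2m = \poly(n)$. (One harmless point: the natural number $R$ from \Cref{obs:R} could in principle be $0$ when $y^* = \bar y = 0$; in that case one simply replaces it by $1$, so we may assume $R \geq 1$.)

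For \eqref{intSPN}, I would reuse the same rational witness $\bar y \in \mathbb Q^m$ as for \eqref{cop-program}. Writing $\sum_{i=1}^m \bar y_i A_i - C = P + N$ with $P \succ 0$, $\lambda_{\min}(P) \geq 2^{-\poly(n)}$, $N \geq 0$ and $\size(P), \size(N) \leq \poly(n)$, one obtains the decomposition $M_{\bar y} = (P \oplus D) + (N \oplus 0)$, where $P \oplus D \succ 0$ and $N \oplus 0 \geq 0$. Since $\bar y \in [-R, R]^m$, every diagonal entry of $D$ equals $2R \pm \bar y_i \geq R \geq 1$, so $\lambda_{\min}(P \oplus D) = \min\{\lambda_{\min}(P), \lambda_{\min}(D)\} \geq 2^{-\poly(n)}$, and clearly $\size(P \oplus D), \size(N \oplus 0) \leq \poly(n)$. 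Hence \eqref{CP-1} satisfies \eqref{intSPN} with interior point $\bar y$, and therefore so do its relaxations \eqref{CP-1-Knr}, \eqref{CP-1-Qnr}, since condition \eqref{intSPN} depends only on the tuple of defining matrices.

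The key step is \eqref{PBOS}, and this is precisely what the extra diagonal block $D$ is designed to buy. On the one hand, since $\SPN_{n+2m} \subseteq \MK_{n+2m}^{\smash{(r)}}$ and $\SPN_{n+2m} \subseteq \MQ_{n+2m}^{\smash{(r)}}$ (as $\SPN_{n+2m} = \MK_{n+2m}^{\smash{(0)}} = \MQ_{n+2m}^{\smash{(0)}}$ and the cones are nested in $r$), the point $\bar y$ above, for which $M_{\bar y} \in \SPN_{n+2m}$, is feasible for both relaxations, so their feasible regions are nonempty --- and closed, as the cones are spectrahedral and $y \mapsto M_y$ is continuous. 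On the other hand, I claim every feasible $y$ has $\|y\|_\infty \leq 2R$. Indeed, if $M_y \in \MK_{n+2m}^{\smash{(r)}} \subseteq \COP_{n+2m}$ (resp.\ $\MQ_{n+2m}^{\smash{(r)}} \subseteq \COP_{n+2m}$), then, because the copositive cone is closed under taking principal submatrices, the principal submatrix of $M_y$ on the last $2m$ coordinates --- which is exactly $D = \mathrm{Diag}(2R - y_1, 2R + y_1, \dots, 2R - y_m, 2R + y_m)$ --- is copositive; but a diagonal matrix is copositive if and only if its diagonal entries are nonnegative, forcing $2R \pm y_i \geq 0$, i.e.\ $|y_i| \leq 2R$ for all $i \in [m]$. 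Thus the feasible region of \eqref{CP-1-Knr} (resp.\ \eqref{CP-1-Qnr}) is a nonempty compact subset of $[-2R, 2R]^m$, so the linear objective $b^Ty$ attains its minimum at some $y' \in [-2R, 2R]^m$; since $2R \leq 2^{\poly(n)}$, this establishes \eqref{PBOS}.

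With \eqref{PBOS} and \eqref{intSPN} verified for both \eqref{CP-1-Knr} and \eqref{CP-1-Qnr}, \Cref{thm:cones-K} and \Cref{thm:bounds-Q} apply and show that $p{'}_{\MK}^{\smash{(r)}}$ and $p{'}_{\MQ}^{\smash{(r)}}$ can be computed up to additive error $\varepsilon > 0$ in time $\poly(n, \log(1/\varepsilon))$, which is the assertion. I expect the \eqref{PBOS} step to be the only non-routine ingredient: boundedness of the feasible region of \eqref{cop-program} does not propagate to its relaxations, and it is exactly the appended block $D$ --- combined with the fact that copositivity passes to principal submatrices --- that confines every feasible point of the \emph{relaxed} programs to the explicit box $[-2R, 2R]^m$. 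Everything else is bookkeeping transported from \Cref{section:main-results}.
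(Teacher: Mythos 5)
Your proposal is correct and follows essentially the same route as the paper's own proof: verify \eqref{intSPN} for the modified program via the direct-sum decomposition $M_{\bar y} = (P\oplus D) + (N\oplus 0)$ with the same eigenvalue argument, verify \eqref{PBOS} by observing that the appended diagonal block $D$ forces any feasible $y$ into $[-2R,2R]^m$, and then invoke \Cref{THM:main}. You merely spell out a couple of steps the paper leaves implicit (e.g.\ that feasibility of the relaxations follows from $M_{\bar y}\in\SPN_{n+2m}=\MK_{n+2m}^{(0)}\subseteq\MK_{n+2m}^{(r)}$, and that the box constraint comes from nonnegativity of diagonal entries of a copositive matrix, which you derive via principal submatrices though it is even more direct from $e_k^TM_ye_k\ge 0$).
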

\begin{proof}
     It suffices to show that~\eqref{CP-1-Knr} and~\eqref{CP-1-Qnr} satisfy the conditions of \Cref{THM:main}, namely~\eqref{PBOS} and~\eqref{intSPN}. We will do so only for~\eqref{CP-1-Knr}, as the proof for~\eqref{CP-1-Qnr} is analogous. 
     
     First, we observe that program~\eqref{CP-1-Knr} satisfies~\eqref{PBOS}. Indeed, by construction, every feasible solution of program~\eqref{CP-1-Knr} lies in the box $[- 2R,  2R]^m$. Furthermore, the optimum is attained because the cones $\MK^{\smash{(r)}}_{n + 2m}$ are closed. It remains to show that program~\eqref{CP-1-Knr} satisfies~\eqref{intSPN}. 
     Since the original program~\eqref{cop-program} satisfies~\eqref{intSPN}, there is a $\bar y \in \R^m$ with
        $$\sum_{i=1}^m \bar y_i A_i - C = P+N$$
    for some matrices $P,N\in \mathcal{S}^n$ of bit size $\leq \poly(n)$ such that $\lambda_{\min}(P)\geq 2^{-\poly(n)}$, and $N\geq 0$. Then, using~\eqref{A-bar}, the matrix $M_{\bar y}$ can be written as
        $$M_{\bar y}= \underbrace{\left(\begin{array}{c|c} 
            P & 0\\
            \hline 
            0 &  \begin{smallmatrix}
               &&&&\\
               2R-\bar y_1 & & & &\\
               & 2R+\bar y_1 & & &\\
               & & \ddots & &\\
               & & &  2R-\bar y_m &\\
               & & & &  2R+\bar y_m
            \end{smallmatrix}
            \end{array}\right)}_{P'}+
            \underbrace{
            \left(\begin{array}{c|c} 
            N & 0\\
            \hline 
            0 &  0
            \end{array}\right)
            }_{N'}
            .$$
    In addition, the eigenvalues of $P'$ are given by the eigenvalues of $P$ and the numbers $2R-\bar y_1, 2R+\bar y_1,\dots, 2R-\bar y_m$ and $2R+\bar y_m$. As we have $\lambda_{\min}(P) \geq 2^{-\poly(n)}$ and $ 2R-\bar y_i, 2R+\bar y_i \geq 2^{-\poly(n)}$, we therefore have $\lambda_{\min}(P') \geq 2^{-\poly(n)}$.
    Since the vector $\bar{y}$, and the matrices $P'$ and $N'$ have bit size $\leq \poly(n)$, we obtain that program \eqref{CP-1-Knr} satisfies (\ref{intSPN}).
\end{proof}

\subsection{Chromatic number of a graph}
The {\em chromatic number} of a graph $G=(V,E)$, denoted $\chi(G)$, is the minimum number of colors required to color the vertices of $G$ such that no two adjacent vertices have the same color. The chromatic number can be formulated as a copositive program~\cite{GvozdenovicLaurent}. Namely, it equals
\begin{equation}\label{chromatic-number-cop}
    \max_{(y,z)\in \mathbb{R}^2} \Big \{ y \colon M_{y,z,t}=\frac{1}{n^2}(t-y)J + z(n(I + A_{G_t})-J) \in \COP_n \text{ for } 1\leq t \leq n \Big \},
\end{equation}
where $A_{G_t}$ is the adjacency matrix of the graph $G_t$, obtained as the cartesian product of the complete graph $K_t$ with $t$ vertices and $G$.\footnote{For any $t\in \mathbb N$, the node set of $G_t$ is given by $V(K_t)\times V(G) = \bigcup_{p=1}^t V_p$, where $V_p := \{ pi \colon i\in V(G)\}$ and $(pi,qj)\in E(G_t)$ if $p\neq q \text{ and }i=j$ or if $p=q \text{ and } ij\in E(G)$} We note that the stability number of this graph satisfies
\begin{align}\label{property:alpha}
    \alpha(G_t)= \begin{cases}
       n\quad \quad \quad \quad \text{ if } k\geq \chi(G) \\
       \leq n-1 \quad \ \text{ if } 1\leq k\leq \chi(G)-1. 
    \end{cases}
\end{align}

It is not clear whether \Cref{THM:main} applies to program~\eqref{chromatic-number-cop}. In particular, while~\eqref{intSPN} is satisfied, it is not clear whether the relaxations~\eqref{eq:conic-prog1} and~\eqref{eq:conic-prog2} have polynomially bounded optimal solutions. However, as we show now, the original copositive formulation is guaranteed to satisfy~\eqref{PBOS} and thus the approach described in \Cref{new-bounds} can be applied (see also \Cref{THM:main2}).

\begin{thm}
    The copositive formulation~\eqref{chromatic-number-cop} of the chromatic number of a graph satisfies conditions~\eqref{PBOS} and~\eqref{intSPN}. 
\end{thm}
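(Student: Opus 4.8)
The plan is to verify the two conditions~\eqref{intSPN} and~\eqref{PBOS} directly for the program~\eqref{chromatic-number-cop}. This program has only two scalar variables $y,z$ and $n$ copositivity constraints (one for each $t\in[n]$), which we regard as a single block-diagonal membership constraint $\bigoplus_{t=1}^n M_{y,z,t}\in\COP$ of the form~\eqref{cop-program}; every matrix appearing has dimension $O(n^3)$, so ``$\poly(n)$'' and ``polynomial in the ambient dimension'' coincide throughout, and it suffices to treat each block separately.

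For~\eqref{intSPN} I would simply exhibit the rational point $(\bar y,\bar z)=(-n^2,1)$, which has bit size $\poly(n)$. A direct computation gives, for every $t$,
\[
    M_{\bar y,\bar z,t}=\tfrac{t+n^2}{n^2}J+\big(n(I+A_{G_t})-J\big)=\tfrac{t}{n^2}J+nI+nA_{G_t}=\underbrace{nI}_{P}+\underbrace{\Big(\tfrac{t}{n^2}J+nA_{G_t}\Big)}_{N}.
\]
Here $P\succ0$ with $\lambda_{\min}(P)=n\geq 2^{-\poly(n)}$, and $N\geq0$ entrywise; since $t\leq n$, both $P$ and $N$ have entries of bit size $\poly(n)$. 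Hence $M_{\bar y,\bar z,t}\in\SPN^{\rm low}$ for all $t$, and taking block sums gives an interior $\SPN^{\rm low}$ point for the stacked program — exactly~\eqref{intSPN}.

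For~\eqref{PBOS} the real work is to describe the feasible set in $(y,z)$ explicitly. The key input is the de Klerk--Pasechnik identity~\eqref{alpha}: for a graph $H$ on $N$ vertices, $s(A_H+I)-J\in\COP_N$ if and only if $s\geq\alpha(H)$ (the feasible $s$ form a ray because $A_H+I\geq0$). Writing $M_{y,z,t}=z\big(n(I+A_{G_t})-\mu J\big)$ with $\mu:=1-\tfrac{t-y}{zn^2}$, one obtains for $z>0$ that $M_{y,z,t}\in\COP$ if and only if $\mu\leq n/\alpha(G_t)$. I would then set $z=1$ and substitute the values of $\alpha(G_t)$ recorded in~\eqref{property:alpha}: for $t\geq\chi(G)$ one has $\alpha(G_t)=n$ and the constraint reduces to $y\leq t$, tightest at $y\leq\chi(G)$; for $t\leq\chi(G)-1$ one has $\alpha(G_t)\leq n-1$, so $\tfrac{n-\alpha(G_t)}{\alpha(G_t)}\geq\tfrac1n$, and the constraint becomes $y\leq t+n^2\tfrac{n-\alpha(G_t)}{\alpha(G_t)}\geq t+n\geq\chi(G)$, which is slack. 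Thus $(\chi(G),1)$ is feasible. For optimality, suppose $(y,z)$ is feasible with $y>\chi(G)$: if $z>0$, the constraint at $t=\chi(G)$ (legal since $1\leq\chi(G)\leq n$) forces $y\leq\chi(G)$, a contradiction; if $z\leq0$, the diagonal entry $(M_{y,z,\chi(G)})_{ii}=\tfrac{\chi(G)-y}{n^2}+z(n-1)$ is strictly negative, contradicting copositivity. Hence $(\chi(G),1)$ is an optimal solution lying in $[-n,n]^2$, which gives~\eqref{PBOS} with $R=n$.

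I expect the~\eqref{PBOS} argument to be the only delicate point: one has to turn the a priori unbounded family of copositivity constraints into an explicit polytope in $(y,z)$, which requires both the exact characterization~\eqref{alpha} and the combinatorial fact~\eqref{property:alpha}, and the regime $z\leq0$ must be excluded separately via nonnegativity of the diagonal of a copositive matrix. Condition~\eqref{intSPN} is immediate. Together with~\Cref{THM:main2} (equivalently, \Cref{THM:main} applied after the reduction of~\Cref{new-bounds}), this yields that the relaxations~\eqref{eq:conic-prog1} and~\eqref{eq:conic-prog2} of the chromatic-number program can be computed up to additive error $\ep>0$ in time $\poly(n,\log(1/\ep))$.
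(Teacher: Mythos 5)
Your proof is correct, and the \eqref{intSPN} half is essentially the paper's argument: the same interior point $(\bar y,\bar z)=(-n^2,1)$ with the same identity $M_{\bar y,\bar z,t}=\tfrac{t}{n^2}J+nI+nA_{G_t}$; you split off $P=nI$ where the paper splits off $P=I$, an immaterial difference.

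For \eqref{PBOS} your route genuinely diverges from the paper's. The paper simply exhibits $(\chi(G),1)$ and verifies copositivity of each block $M_{\chi(G),1,t}$ by hand, decomposing it into an explicit sum of two copositive matrices (one of the form $s(I+A_{G_t})-cJ$ justified via the de~Klerk--Pasechnik identity, the other a nonnegative multiple of $J$ or of $I+A_{G_t}$), and then cites the known fact that the program's optimum is $\chi(G)$ to conclude optimality. You instead rewrite each block as $z\bigl(n(I+A_{G_t})-\mu J\bigr)$ with $\mu=1-\tfrac{t-y}{zn^2}$ and use the ray structure of the de~Klerk--Pasechnik identity to get the \emph{exact} feasibility condition $\mu\le n/\alpha(G_t)$ for $z>0$, from which both feasibility of $(\chi(G),1)$ and the upper bound $y\le\chi(G)$ fall out; the remaining regime $z\le 0$ you kill via nonnegativity of the diagonal entry at $t=\chi(G)$. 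This is tidier: the paper's proof needs the external fact that the optimum equals $\chi(G)$ in order to declare the exhibited solution optimal, whereas your argument derives that optimality from scratch, and in particular proves directly that every feasible solution lies in $[-n,n]^2$, which is precisely the content of \eqref{PBOS}. Both proofs rely on the same two combinatorial inputs (\eqref{alpha} and \eqref{property:alpha}); yours is marginally longer but more self-contained. One small thing worth noting explicitly if you flesh this out: the regime $z=0$ is not covered by the $\mu$-characterization (the change of variables divides by $z$), but your $z\le 0$ diagonal argument does handle it, so there is no gap.
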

\begin{proof}
  First, we show that program (\ref{chromatic-number-cop}) satisfies (\ref{PBOS}). For this, we show that $(y,z)=(\chi(G),1)$ is a feasible (and thus optimal) solution. We show that the matrix $M_{\chi(G), 1, t}$ is copositive for $t=1, \dots, n$. First, assume that $t\geq \chi(G)$. Then, we have that $\alpha(G_t)=n$, and thus the matrix $n(I+A_{G_t})-J$ is copositive (recall formulation (\ref{alpha})). Moreover, since $t-\chi(G)\geq 0$, the matrix $(1/n^2)(t-\chi(G))J$ is copositive. Then, we obtain that $M_{\chi(G), 1, t}$ is copositive. 
    Now, assume $t\leq \chi(G)-1$. By property \eqref{property:alpha}, this implies that $\alpha(G_t)\leq n-1$ and $M_{\chi(G), 1, t}$ is copositive being the sum of two copositive matrices:
    \begin{align*}
        M_{\chi(G), 1, t}=\underbrace{\frac{1}{n^2}(t-\chi(G))J+(I+A_{G_t})}_{M_1}+\underbrace{(n-1)(I+A_{G_t})-J}_{M_2}
    \end{align*}
    where $M_1=(I+A_{G_t})-(1/n^2)(\chi(G)-t)J$ is copositive since $n^2/(\chi(G)-t)\geq n-1$, and $M_2$ is copositive since $\alpha(G_t)\leq n-1$ (recall formulation (\ref{alpha})).
    
    We show that property \eqref{intSPN} is satisfied.
    Let $(\bar y, \bar z)=(-n^2,1)$. Then we have
    \begin{align*}
        M_{\bar y,\bar z,t}&=\frac{1}{n^2}(t+n^2)J + n(I + A_{G_t})-J \\
        &=\underbrace{\frac{1}{n^2}tJ + nA_{G_t} + (n-1)I}_N + \underbrace{I}_P
    \end{align*}
    where both $N \geq 0$ and $P$ have entries of bit size $\leq \poly(n)$, and $\lambda_{\min}(P)=1$.
\end{proof}

\section{Examples of pathological copositive programs}\label{section:examples}

In this section we give several  examples of pathological copositive programs that illustrate the necessity of the assumptions we make in our main results.

\begin{ex}\label{ex:khachiyan-type}
For $y \in \mathbb R^{n}$, let $M_y \in \MS^{2n-1}$ be the block diagonal matrix given by
\begin{equation*}
    M_y:= \begin{pmatrix}
              y_1  & \ -y_2\\
              -y_2 & 1
          \end{pmatrix} \oplus
          \dots \oplus
          \begin{pmatrix}
              y_{n-1}  & \ -y_n\\
              -y_n & 1
          \end{pmatrix} \oplus
          \begin{pmatrix}
              y_n -2
          \end{pmatrix}.
\end{equation*}
Consider the following copositive program:
    \begin{align}\label{ex:problem1}
        p=\min\Big\{ y_n: M_y \in \COP_{2n - 1}\Big\}.
    \end{align}    
By looking at the diagonal entries, we get that every feasible solution should satisfy $y_i\geq 0$ for $i\in[n-1]$ and $y_n\geq 2$. Observe that $y_i=2^{2^{n-i}}$ is feasible because each block is a positive semidefinite matrix. Thus, the optimal value of problem (\ref{ex:problem1}) is 2. Now, we show that every optimal solution $y$, we have $y_1\geq 2^{2^{n-1}}$. Since every $2\times 2$ diagonal block of $M_y$ is copositive, it must hold
    $$ \begin{matrix}
         (1 & y_{i+1}) \\
          & 
     \end{matrix}\left(\begin{matrix}
         y_i & -y_{i+1} \\
         -y_{i+1} & 1
     \end{matrix}\right)\left(\begin{matrix}
          1\\
          y_{i+1}
     \end{matrix}\right) = y_i-2y_{i+1}^2 + y_{i+1}^2\geq 0.$$
 Thus, $y_i \geq y_{i+1}^2.$  Using these inequalities recursively and the fact that $y_n\geq 2$,  we obtain that $y_1 \geq 2^{2^{n-1}}$. 
 In this case, the optimal solution of the problem has low bit size but the vector $y$ has high bit size.
\end{ex}

\begin{ex}\label{ex:SDPnotfeasible}
Let $C_5$ be the cycle graph on $5$ vertices and let $A_{C_5}$ be its adjacency matrix. Let $I_5,J_5\in \mathcal S^5$ be the identity and the matrix of all ones, respectively. We define the following $6\times 6$ matrix
    $$M_1= \begin{pmatrix}
        2(A_{C_5}+I)-J  & 0\\
        0 & 0
       \end{pmatrix}.$$
This matrix is copositive (see formulation~\eqref{alpha}). Consider the copositive program
\begin{align}\label{ex-2}
    p^* = \min\Big\{ y: M_y=\left(\begin{array}{c|c} 
    M_1 & 0\\
    \hline 
    0 &  y 
    \end{array}\right) \in \COP_7\Big\}.
\end{align}
It is easy to observe that the optimal value of program (\ref{ex-2}) is 0. It was shown in~\cite{LV2023-exactness} that $M_1\notin \MK_6^{\smash{(r)}}$ for any $r\in \mathbb{N}$. It follows $M_y\notin \MK_7^{\smash{(r)}}$ for any $y\geq 0$ and $r\in\mathbb N$. This shows that the relaxations \eqref{cop-program-approx1} of program \eqref{ex-2} using the cones~$\MK_7^{\smash{(r)}}$ are not feasible. The same reasoning holds for the relaxations \eqref{cop-program-approx3} using the cones~$\MQ_7^{\smash{(r)}}$.
\end{ex}

\begin{ex}\label{ex:PBOSSDPneeded}
Let $A_{C_5}$, $I_5$ and $J_5$ in $\mathcal S^5$ be defined as in \Cref{ex:SDPnotfeasible}.
Given a vector $y\in\mathbb R^n$, and scalars $z,w \in \mathbb{R}$ we define the following sparse block matrix
    $$\tilde{M}_{wyz}=\begin{pmatrix}
              y_1 & -\frac{2}{3}y_2\\
              -\frac{2}{3}y_2 & 1
          \end{pmatrix} \oplus \dots \oplus \begin{pmatrix}
              y_{n-1} & -\frac{2}{3}y_{n}\\
              -\frac{2}{3}y_{n} & 1
          \end{pmatrix} \oplus \begin{pmatrix}
              y_{n} & -\frac{40}{3}(z-w)\\
              -\frac{40}{3}(z-w) & 1 
              \end{pmatrix}.
    $$
We consider the following problem 
\begin{equation}\label{ex-cop}
    p=\min \Big \{z +w : M_{wyz}=\left({\footnotesize{\begin{array}{c|c|c} 
            z(A_{C_5}+I_5)-J_5 & 0& 0\\
            \hline 
           0  & \Tilde M_{wyz} & 0\\
             \hline
             0 & 0 & w-2
            \end{array}}}\right) \in \COP_{2n+6} \Big \}.
\end{equation}
Observe that the top left block is copositive if and only if $z\geq 2$ (recall formulation~(\ref{alpha})). Also, for any feasible point, we have $w\geq 2$. It is easy to observe that the matrix $M_{wyz}$ obtained by taking $z=2$, $w=2$, and $y_1=y_2=\dots=y_{n-1}=0$, is copositive. Thus, this is an optimal solution of problem (\ref{ex-cop}). Now, we show that this problem satisfies property \eqref{intSPN}. Indeed, after picking  $z=6+(1/20)$, $w=6$, $y_1=y_2= \dots =y_n=1$, each block of the matrix $M_{wyz}$ takes the form $P+N$ for some matrices $P\succeq 0$, $N\geq 0$ of rational low bit size entries with $\lambda_{\min}(P)\ge 1/3$.

Now, we consider relaxation (\ref{cop-program-approx1}) of problem (\ref{ex-cop}) (with $r=0$).  
\begin{equation}\label{ex-0}
    p_{\MK}^{\smash{(0)}}=\min \Big \{z +w : M_{wyz}={\footnotesize{\left(\begin{array}{c|c|c} 
            z(A_{C_5}+I_5)-J_5 & 0& 0\\
            \hline 
           0  & \Tilde M_{wyz} & 0\\
             \hline
             0 & 0 & w-2
            \end{array}\right) }}\in \MK_{2n+6}^{\smash{(0)}} \Big \}.
\end{equation}
For every feasible solution we have that the top left block lies in $\MK_5^{\smash{(0)}}$. This implies that $z\geq \sqrt{5}$ \cite{dKP02}. Also, for any feasible solution it holds that $w\geq 2$.  We observe that if we pick $z=\sqrt{5}$, $w=2$, $y_n=4$ and $y_{i}\geq y_{i+1}^2$ for $i=1, \dots, n-1$, then all blocks of $\tilde{M}_{wyz}$ are positive semidefinite matrices, and thus this is an optimal solution for problem (\ref{ex-0}). Now, we show that for every optimal solution we have that $y_1\geq 2^{2^n}$. We set $z=\sqrt{5}$ and $w=2$. Then, by looking at the last block of $\tilde{M}_{wyz}$ we obtain that $y_n\geq 9$. Also, by looking at the $i$-th block of $\tilde{M}_{wyz}$, we get $y_i\geq (1/3)y_{i+1}^2$. Indeed,
    $$ \begin{matrix}
         (1 & y_{i+1}) \\
          & 
     \end{matrix}\left(\begin{matrix}
         y_i & -\tfrac{2}{3}y_{i+1} \\
         -\tfrac{2}{3}y_{i+1} & 1
     \end{matrix}\right)\left(\begin{matrix}      1\\
         y_{i+1}
     \end{matrix}\right) = y_i-\frac{4}{3}y_{i+1}^2 + y_{i+1}^2\geq 0.$$
This implies that $y_1\geq 3^{2^{n}-1}$. 

We have shown that every optimal solution of the relaxation (\ref{ex-0}) has doubly exponential magnitude. Thus, it cannot be computed (or approximated) in polynomial time.  However, the copositive program (\ref{ex-cop}) satisfies properties (\ref{PBOS}) and (\ref{intSPN}). Then, the strategy of Section \ref{new-bounds} can be applied. Namely, the upper bounds $p_\MQ^{\smash{'(r)}}$ and $p_\MK^{\smash{'(r)}}$  can be computed in polynomial time (up to fixed precision) (Theorem~\ref{thm-new-bound}).
\end{ex}

\appendix

\section{Reciprocal of a copositive program} \label{appendix:cop-inv}

Consider the standard quadratic problem 
\begin{align}\tag{SQP}
    p_{\min} =\min \Big\{x^TMx: x\in \Delta_n\Big\},
\end{align}
and the associated copositive program 
\begin{align}\tag{CP-INV}
    p^*=\min_{\lambda\in \mathbb{R}}\Big\{\lambda \colon \lambda M-J\in \COP_n\Big\}.
\end{align}

\begin{lemma}
    If $p_{\min}>0$, then $p^*=1/p_{\min}$.
\end{lemma}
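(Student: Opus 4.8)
The plan is to translate the matrix condition $\lambda M - J \in \COP_n$ into a scalar inequality and then recognize it as equivalent to $\lambda \geq 1/p_{\min}$. Concretely, $\lambda M - J \in \COP_n$ means $\lambda\, x^T M x \geq x^T J x = \big(\sum_{i=1}^n x_i\big)^2$ for every $x \in \R^n_+$. I would first record that any feasible $\lambda$ is strictly positive: evaluating this inequality at any $x \in \Delta_n$ gives $\lambda\, x^T M x \geq 1$, and since $p_{\min} > 0$ forces $x^T M x > 0$ on $\Delta_n$, we get $\lambda > 0$.

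For the core equivalence, the ``only if'' direction follows from a single evaluation: let $x^* \in \Delta_n$ attain $p_{\min}$; then copositivity of $\lambda M - J$ yields $\lambda\, p_{\min} = \lambda\, (x^*)^T M x^* \geq \big(\sum_i x^*_i\big)^2 = 1$, i.e.\ $\lambda \geq 1/p_{\min}$. For the ``if'' direction, assume $\lambda \geq 1/p_{\min}$ and take an arbitrary $x \in \R^n_+$. If $x = 0$ both sides of the target inequality vanish; otherwise $s := \sum_i x_i > 0$ (as $x \geq 0$, $x \neq 0$) and $\bar x := x/s \in \Delta_n$, so $x^T M x = s^2\, \bar x^T M \bar x \geq s^2 p_{\min}$, and hence $\lambda\, x^T M x \geq \lambda s^2 p_{\min} \geq s^2 = \big(\sum_i x_i\big)^2$. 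Thus $\lambda M - J \in \COP_n$.

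Combining the two directions, the feasible region of the program defining $p^*$ is exactly the half-line $[\,1/p_{\min},\infty)$, so the minimum is attained and equals $1/p_{\min}$, as claimed.

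There is no genuine obstacle here; the only point requiring a little care is the degree-$2$ homogeneity (rescaling) argument, which must cover the whole nonnegative orthant — in particular the degenerate ray $\sum_i x_i = 0$, where $x \geq 0$ forces $x = 0$ — and must use the hypothesis $p_{\min} > 0$ so that passing between $\lambda p_{\min} \geq 1$ and $\lambda \geq 1/p_{\min}$ is valid.
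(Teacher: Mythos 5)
Your proof is correct and follows essentially the same route as the paper: first argue $\lambda > 0$ for any feasible point, then reduce the copositivity condition $\lambda M - J \in \COP_n$ to the scalar inequality $\lambda\, x^T M x \geq 1$ on $\Delta_n$, which is equivalent to $\lambda \geq 1/p_{\min}$. The only cosmetic difference is that you spell out the degree-$2$ homogeneity argument connecting $\R^n_+$ with $\Delta_n$, which the paper leaves implicit in its chain of equivalences.
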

\begin{proof}
     We first show that for every feasible point $\lambda\in \mathbb{R}$ of program \eqref{COP-INVERSE1}, we have $\lambda>0$. Indeed, since $p_{\min}>0$, we have that $x^TMx>0$ for all $x\in \Delta_n$. Thus, for any feasible $\lambda$, we have $x^T(\lambda M-J)x=\lambda x^TMx - 1 \geq 0$ for all $x\in \Delta_n$, implying that $\lambda > 0$. Now, for $\lambda >0$, we have:  
\begin{align*}
    \lambda M-J \in \COP_n & \Longleftrightarrow x^T(\lambda M -J)x \geq 0 \quad \forall x\in \Delta_n \\
    & \Longleftrightarrow \lambda x^TMx \geq 1 \quad \forall x\in \Delta_n \\
    & \Longleftrightarrow  x^TMx \geq \textstyle{\frac{1}{\lambda}} \quad \forall x\in \Delta_n \\ 
    & \Longleftrightarrow p_{\min} \geq \textstyle{\frac{1}{\lambda}}
 \end{align*}
On the one hand, this shows that $p_{\min} \geq 1/p^{*} $. On the other hand, it shows that $\lambda=1/p_{\min}$ is feasible for \eqref{COP-INVERSE1} and thus $p^*\leq 1/p_{\min}$. We conclude that $p^*=1/p_{\min}$.
\end{proof}

\section{Proof of equation \Cref{eq:rel q-p}} \label{appendix:rel q-p}

In the following, we prove equation \eqref{eq:rel q-p} for the relaxations \eqref{COP-SQ-K1} and \eqref{eq: sdp-knr1}. The same relation holds for the relaxations \eqref{COP-SQ-Q1} and \eqref{eq: sdp-qnr1} obtained using the cones $\MQ_n^{\smash{(r)}}$, and the proof is analogous to the one presented below.

Consider the equivalent formulation of problem \eqref{COP-SQ-K1} given by
\begin{equation} \label{COP-SQ-Kmax}
    p_\MK^{\smash{(r)}}=\max_{\lambda \in \mathbb R} \Big \{ \lambda \colon M - \lambda J\in \MK_n^{\smash{(r)}}\Big \}
\end{equation}
and let $\lambda>0$ be feasible for problem \eqref{eq: sdp-knr1}. It follows that $q_\MK^{\smash{(r)}}\leq \lambda$ for all $r\in \mathbb N$ and that 
$\lambda M -J \in \MK_n^{\smash{(r)}} \Longleftrightarrow M -(1/\lambda)J \in \MK_n^{\smash{(r)}} \Longleftrightarrow 1/\lambda \text{ feasible for problem \eqref{COP-SQ-Kmax}}$, 
which implies $p_{\MK}^{\smash{(r)}} \geq (1/\lambda)$. 
Observe that, taking $\lambda=q_\MK^{\smash{(r)}}$, we get 
\begin{equation}\label{eq:pkr-qkr01}
    p_{\MK}^{\smash{(r)}}\geq \textstyle{\frac{1}{q_\MK^{(r)}}} \quad \Rightarrow \quad q_\MK^{\smash{(r)}}\geq \textstyle{\frac{1}{p_{\MK}^{(r)}}}.
\end{equation}
Let now $\lambda^*> 0$ be feasible for problem \eqref{COP-SQ-Kmax}. Then, $p_{\MK}^{\smash{(r)}} \geq \lambda^*$ for all $r\in \mathbb N$, and
$M -\lambda^* J \in \MK_n^{\smash{(r)}} \Longleftrightarrow (1/\lambda^*)M -J \in \MK_n^{\smash{(r)}} \Longleftrightarrow 1/\lambda^* \text{ feasible for problem \eqref{eq: sdp-knr1}}$,
which implies $q_\MK^{\smash{(r)}} \leq 1/\lambda^*$. 
For $\lambda^*=p_{\MK}^{\smash{(r)}}$, we then get
\begin{equation}\label{eq:pkr-qkr02}
    q_\MK^{\smash{(r)}}\leq \textstyle{\frac{1}{p_{\MK}^{(r)}}}.
\end{equation}
Finally, putting inequalities \eqref{eq:pkr-qkr01} and \eqref{eq:pkr-qkr02} together, we derive 
$q_\MK^{\smash{(r)}}= 1/p_{\MK}^{(r)}.$

\section*{Acknowledgments}
We wish to thank Corbinian Schlosser for helpful discussions regarding \Cref{SEC:norms}, and in particular for pointing us to~\Cref{lemma-corbi}.

The first, third and fourth author are supported by the Swiss National Science Foundation project No.~200021\_207429 / 1 \textit{Ideal Membership Problems and the Bit Complexity of Sum of Squares Proofs}. The second author is supported by funding from the European Research Council (ERC) under the
European Union's Horizon 2020 research and innovation programme (grant agreement No.~815464).

\bibliographystyle{alpha}
\bibliography{Copositive_polynomial}

\end{document}